\renewcommand{\ss}{\operatorname{ss}}
\renewcommand\min{{\operatorname{min}}}
\theoremstyle{plain}
\newtheorem{thm}{Theorem}[section]
\newtheorem{prop}[thm]{Proposition}
\newtheorem{lem}[thm]{Lemma}
\newtheorem{cor}[thm]{Corollary}
\newtheorem{conj}[thm]{Conjecture}
\newtheorem{defn}[thm]{Definition}
\newtheorem{defn/thm}[thm]{Definition/Theorem}
\theoremstyle{remark}
\newtheorem{remark}{Remark}
\begin{document}
	
	\title{On irreducibility of six-dimensional compatible systems of $\mathbb{Q}$}
	
	\author{Boyi Dai}
	\address{Department of Mathematics, HKU, Pokfulam, Hong Kong}
	\email{Daiboy@connect.hku.hk}
	\subjclass[2020]{11F80, 11F70, 11F22, 20G05}

	\begin{abstract}
		We study the irreducibility of 6-dimensional strictly compatible systems of $\mathbb{Q}$ with distinct Hodge-Tate weights. We prove that if one of the representations $\rho$ in such a system is irreducible and satisfies a self-dual condition $\rho^{\vee}\otimes\chi\cong\rho$ for some character $\chi$, then all but finitely many of them are irreducible.
	\end{abstract}

	\maketitle
	

	\section{Introduction}
	
	Given an elliptic curve $E$ over a number field $K$. As $\ell$ varies over rational primes, the rational $\ell$-adic Tate modules $\rho_{\ell}:=T_{\ell}(E)\otimes\mathbb{Q}_{\ell}$ and the $\ell$-torsion points $\overline{\rho}_{\ell}:=E[\ell]$ form classical examples of a (2-dimensional) compatible system and a mod $\ell$ compatible system, respectively. The following result shows that they share uniform irreducibility property. Here statement (i) can be regarded as a weak version of Serre's celebrated big image result in \cite{Se72}.
	\begin{thm}\label{Silicon}
		Let $E$ be an elliptic curve over a number field $K$, and consider the compatible system $\{\rho_{\ell}\}$ and the mod $\ell$ compatible system $\{\overline{\rho}_{\ell}\}$ of $\mathrm{Gal}_K$.
		\begin{enumerate}[(i)]
			\item If $E$ does not have complex multiplication over $\overline{K}$, then for every $\ell$, the $\ell$-adic representation $\rho_{\ell}$ is absolutely irreducible and Lie-irreducible. Moreover, the mod $\ell$ representation $\overline{\rho}_{\ell}$ is absolutely irreducible for sufficiently large $\ell$.
			\item If $E$ has complex multiplication over $K$, then after possibly enlarging the coefficients, the system $\{\rho_{\ell}\}$ can be written as a direct sum of two 1-dimensional compatible systems.
			\item If $E$ has potential complex multiplication but not over $K$, then $\{\rho_{\ell}\}$ is absolutely irreducible and is induced from a 1-dimensional compatible system of a quadratic extension of $K$, after necessarily enlarging the coefficients. Moreover, $\overline{\rho}_{\ell}$ is absolutely irreducible for sufficiently large $\ell$.
		\end{enumerate}
	\end{thm}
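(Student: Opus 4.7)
The plan is to address the three cases separately, with case (ii) serving as the base and cases (i), (iii) reducing, respectively, to Serre's open image theorem and to an induction-of-characters argument.

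For case (ii), the complex multiplication by an order $\cO$ in an imaginary quadratic field $L$ endows $T_\ell(E)$ with the structure of a free rank-one module over $\cO \otimes \Z_\ell$. After enlarging the coefficient field to contain $L$, the algebra $L \otimes \Q_\ell$ splits as a product of two fields (indexed by the two embeddings $L \hookrightarrow \overline{\Q}_\ell$), and $\rho_\ell$ decomposes accordingly into two one-dimensional Galois summands. By the main theorem of complex multiplication, these one-dimensional pieces are the $\ell$-adic avatars of algebraic Hecke characters of $K$, so they assemble into $1$-dimensional compatible systems; their direct sum recovers $\{\rho_\ell\}$.

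For case (iii), let $K'/K$ be the quadratic extension over which $E$ acquires CM, and let $\sigma$ generate $\Gal(K'/K)$. Applying case (ii) over $K'$ gives $\rho_\ell|_{\mathrm{Gal}_{K'}} = \chi_\ell \oplus \chi_\ell^\sigma$ for a Hecke character $\chi_\ell$. Since the CM does not descend to $K$, the characters $\chi_\ell$ and $\chi_\ell^\sigma$ are distinct, so $\rho_\ell = \Ind_{\mathrm{Gal}_{K'}}^{\mathrm{Gal}_K}\chi_\ell$ is absolutely irreducible (an induced representation from a quadratic extension is irreducible iff the two conjugate characters differ). Reducibility of $\overline{\rho}_\ell$ is equivalent to the triviality of the mod $\ell$ reduction of $\chi_\ell/\chi_\ell^\sigma$; since this quotient is a fixed non-trivial algebraic Hecke character (whose $\ell$-adic realizations form a compatible system independent of $\ell$ outside the conductor), its reduction is non-trivial for all but finitely many $\ell$.

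For case (i), invoke Serre's open image theorem from \cite{Se72}: for every rational prime $\ell$ the image of $\rho_\ell$ is open in $\GL_2(\Z_\ell)$, and it equals $\GL_2(\Z_\ell)$ for all but finitely many $\ell$. Since an open subgroup of $\GL_2(\Z_\ell)$ is Zariski dense in $\GL_2$, the image cannot be contained in any proper parabolic, giving absolute irreducibility of $\rho_\ell$ for every $\ell$; applying the same to any open subgroup of $\Gal_K$ yields Lie-irreducibility. For almost all $\ell$ the image of $\overline{\rho}_\ell$ contains $\SL_2(\F_\ell)$, which acts absolutely irreducibly on $\F_\ell^{\oplus 2}$, finishing the mod $\ell$ statement. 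The main obstacle throughout is really importing Serre's deep open-image result in case (i); the remaining content is essentially formal consequences of the theory of complex multiplication, class field theory, and the elementary algebra of induced representations from an index-two subgroup.
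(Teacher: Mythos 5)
The paper states this theorem in the introduction as background, citing Serre \cite{Se72} for part (i), but does not give a proof of its own, so there is nothing to compare against line by line. Your argument is the standard one and is essentially correct: Serre's open image theorem gives Zariski density (hence absolute irreducibility and, upon restriction to finite-index subgroups, Lie-irreducibility) plus surjectivity of $\overline\rho_\ell$ for $\ell\gg 0$ in the non-CM case; CM theory and the splitting of $L\otimes\Q_\ell$ after enlarging coefficients to contain $L$ handles case (ii); and Mackey's criterion for induction from an index-two subgroup handles case (iii).

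Two places worth tightening. In (ii), $T_\ell(E)$ is free of rank one over $\cO\otimes\Z_\ell$ only when $\ell$ is prime to the conductor of the order $\cO$; since you immediately pass to $V_\ell=T_\ell(E)\otimes\Q_\ell$, which is always free of rank one over $L\otimes\Q_\ell$, the conclusion is unaffected, but the integral statement as written is not literally true for all $\ell$. In (iii), ``since the CM does not descend to $K$, the characters $\chi_\ell$ and $\chi_\ell^\sigma$ are distinct'' deserves a sentence of justification: the nontrivial $\sigma\in\mathrm{Gal}(K'/K)$ must act on $L\subseteq\mathrm{End}_{K'}(E)\otimes\Q$ as complex conjugation (otherwise $L$ would descend into $\mathrm{End}_K(E)\otimes\Q$), so $\chi_\ell$ and $\chi_\ell^\sigma$ have distinct Hodge--Tate weights and hence are not merely distinct but remain distinct after any finite base change. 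This same observation is what makes $\chi_\ell/\chi_\ell^\sigma$ a fixed algebraic Hecke character of nontrivial infinity type, which is precisely why its mod $\ell$ reduction (equivalently, absolute reducibility of $\overline\rho_\ell$ after extending scalars to $\overline{\F}_\ell$, needed when $\ell$ is inert in $L$) fails only for finitely many $\ell$.
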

	
	It is generally believed that the results above should be extended to arbitrary semisimple compatible systems. More precisely, it is natural to conjecture the following. We call a compatible system is irreducible (resp. Lie-irreducible) if each representation is irreducible (resp. Lie-irreducible). Note also that in the elliptic curve cases above, the compatible systems are regular.
	\begin{conj}\label{Sulfur}
		Let $\mathcal{M}$ be an $n$-dimensional strictly compatible system of a number field $K$.
		\begin{enumerate}[(i)]
			\item $\mathcal{M}$ decomposes as a direct sum of irreducible strictly compatible systems.
			\item If $\mathcal{M}$ is irreducible, then its representations are residually irreducible for all but finitely many primes.
			\item If $\mathcal{M}$ is irreducible and regular, then it is induced from a Lie-irreducible strictly compatible system over a finite extension of $K$.
		\end{enumerate}
	\end{conj}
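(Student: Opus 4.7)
\medskip
\noindent\textbf{Proof proposal.} The plan is to route all three parts through the algebraic monodromy groups $\bG_\ell \subseteq \GL_{n,\overline{\Q_\ell}}$ attached to $\rho_\ell$. Strict compatibility forces the characteristic polynomials of Frobenius to be rational and independent of $\ell$, which gives strong rigidity: structural features readable from characters should propagate from one prime to all others. This rationality is the thread I would exploit throughout.

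For part (i), I fix a prime $\ell_0$ and decompose $\rho_{\ell_0}^{\ss}$ into absolutely irreducible constituents. Brauer--Nesbitt together with Chebotarev yields a trace function with values in a number field for each constituent, and each such function must be realized at every $\ell$ by a subquotient of $\rho_\ell^{\ss}$. To upgrade semisimplification to a genuine direct-sum decomposition I would invoke the potentially semistable nature of strictly compatible families together with Frobenius semisimplicity at primes of good reduction. The individual constituents then glue, by construction, into strictly compatible subsystems.

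For part (ii), I would use Larsen--Pink's classification of finite subgroups of $\GL_n(\overline{\F_\ell})$ to show that, for $\ell$ large relative to $n$ and the ramification, $\bar\rho_\ell(\Gal_K)$ sits inside (and essentially equals) the $\F_\ell$-points of a connected reductive subgroup scheme whose generic fibre recovers $\bG_\ell^{\conn}$. Irreducibility of $\rho_\ell$ forces irreducibility of the standard representation of $\bG_\ell$; once $\ell$ exceeds the Coxeter number and dominates the weights involved, the integral representation theory of Chevalley groups transfers this irreducibility to the mod-$\ell$ picture, and a density argument relays it to $\bar\rho_\ell(\Gal_K)$ itself. For part (iii) regularity is the decisive input: if $\rho_\ell$ is irreducible but not Lie-irreducible, its restriction to $\bG_\ell^{\conn}$ splits as $\bigoplus_{i=1}^d V_i$ with isomorphic irreducible summands transitively permuted by the component group, and the distinct Hodge--Tate weights force the $V_i$ to carry disjoint multisets of such weights. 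The stabilizer of $V_1$ therefore cuts out a finite extension $L/K$ which can be shown independent of $\ell$ by character comparison on $\bG_\ell/\bG_\ell^{\conn}$, and one obtains $\rho_\ell \cong \Ind_{\Gal_L}^{\Gal_K} \sigma_\ell$ with $\sigma_\ell$ Lie-irreducible and varying in a compatible family.

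The principal obstacle is part (ii): controlling $\bar\rho_\ell$ at a truly uniform level typically requires either a residual big-image theorem, known only in small dimensions or for automorphic sources, or potential automorphy inputs available under restrictive hypotheses. For general $n$ this remains open; the $n=6$ assumption of the paper presumably permits a case-by-case enumeration of the possible shapes of $\bG_\ell^{\conn}$ acting on a six-dimensional space, and that is the direction I would specialize to in order to make progress on the statement at hand.
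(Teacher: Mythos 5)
This is stated in the paper as a \emph{conjecture}, not a theorem: the paper explicitly offers \autoref{Sulfur} as the conjectural picture motivating the work, and then proves only the special case $n=6$, $K=\mathbb{Q}$, regular (\autoref{main} and \autoref{main2}). There is no proof of the general statement to compare your proposal against, and indeed your own closing paragraph concedes that the central step ``remains open.'' So rather than comparing approaches, let me flag where the proposal's argument actually breaks.

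The decisive gap is in your part~(i). Compatibility fixes the characteristic polynomials (hence traces) of the \emph{whole} representation $\rho_\lambda$, not of its irreducible constituents. After decomposing $\rho_{\lambda_0}^{\mathrm{ss}}$ into irreducibles $\oplus_i V_i$ and extracting the trace functions of the $V_i$, there is no general mechanism forcing those functions to ``be realized at every $\ell$ by a subquotient of $\rho_\ell^{\mathrm{ss}}$.'' That step is precisely the content of the conjecture, and Brauer--Nesbitt plus Chebotarev give you nothing here: they tell you a semisimple representation is determined by its trace, but they do not produce, for an arbitrary $\lambda$, a $\Gal_K$-subrepresentation of $\rho_\lambda$ with a prescribed trace. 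In the paper's special case this hole is plugged by potential automorphy (\autoref{Carbon}, i.e.\ BLGGT Theorem~C) together with the low-dimensional input (\autoref{Oxygen}): one first shows an irreducible piece $\sigma\subseteq\rho_{\lambda}$ is automorphic after restriction to a totally real extension, which \emph{produces} a strictly compatible system extending $\sigma$, and only then compares formal (bi-)characters to rule out disagreements across $\lambda$. Your proposal omits any automorphy or automorphy-substitute input, so part~(i) has no engine. The same gap reappears in your part~(iii): asserting that the Lie-irreducible pieces $\sigma_\ell$ ``vary in a compatible family'' is equivalent to a relative version of part~(i) over $L$. Part~(ii) you correctly identify as the hard part; but note also that even for $n=6$ the paper does not prove residual irreducibility in full (it uses \autoref{Neon}, which gives density~1 rather than all but finitely many $\lambda$, and the Larsen--Pink/algebraic-envelope machinery of \autoref{Magnesium} only in specific type-A situations). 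So the conjecture is genuinely open at each of the three parts, and the proposal does not close any of them.
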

	
	For a survey of low-dimensional cases, see \cite[Section 1.2]{Hu23a}. When $K=\mathbb{Q}$ and under regularity condition, the case $n=4$ was studied in \cite{Hu23a}, and the case $n=5$ (includes partial irregular cases) was treated in \cite{DWW24}. The present paper addresses the case $n=6$. The main results are as follows.
	\begin{thm}\label{main}
		Let $\{\rho_{\lambda}:\mathrm{Gal}_{\mathbb{Q}}\to\text{GL}_6(\overline{E_{\lambda}})\}$ be a 6-dimensional $E$-rational regular strictly compatible system of $\mathbb{Q}$. Suppose that for some $\lambda_0$, the representation $\rho_{\lambda_0}$ is irreducible and satisfies $\rho_{\lambda_0}^{\vee}\otimes\chi\cong\rho_{\lambda_0}$ for some character $\chi$. Then $\rho_{\lambda}$ is irreducible for all but finitely many $\lambda$.
	\end{thm}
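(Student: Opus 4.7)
The plan is to proceed by contradiction: suppose there is an infinite subset $\Lambda'$ of places such that $\rho_\lambda$ is reducible for every $\lambda\in\Lambda'$. The first step is to appeal to the $\lambda$-independence results for the algebraic monodromy group $\mathbf{G}_\lambda$, i.e.\ the Zariski closure of $\rho_\lambda(\mathrm{Gal}_{\mathbb{Q}})$. After replacing $\mathbb{Q}$ by a fixed finite extension so that each $\mathbf{G}_\lambda$ is connected (a harmless modification), results of Serre, Larsen--Pink and Hui give that $\mathbf{G}_\lambda$ is reductive with reductive and semisimple ranks independent of $\lambda$, and that the formal bi-character of $\mathbf{G}_\lambda$ acting on the tautological $6$-dimensional representation is $\lambda$-independent; regularity of the Hodge--Tate cocharacter forces this common formal character to be multiplicity-free with six distinct weights.

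Next, the hypothesis that $\rho_{\lambda_0}$ is irreducible pins down the isomorphism class of the derived group $\mathbf{G}_{\lambda_0}^{\der}$ together with its faithful irreducible $6$-dimensional representation. Indeed, the irreducible connected reductive subgroups of $\mathrm{GL}_6$ carrying a regular cocharacter form a short explicit list via Dynkin's classification of simple linear subgroups of the small classical groups (for instance $A_1$ acting via $\Sym^5$, the diagonal tensor action of $A_1\times A_2$, $A_2$ acting via $\Sym^2$, $A_3$ acting via $\wedge^2$, the standard $A_5$, and the standard $C_3$, together with tori acting compatibly by characters). By $\lambda$-independence of the formal bi-character, the same derived group (up to an inner twist) and the same abstract $6$-dimensional representation appear at every place $\lambda$.

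For $\lambda\in\Lambda'$ the Galois image preserves a proper subspace. Passing to an infinite subset on which the Jordan--H\"older constituents have a fixed multiset of dimensions (a partition of $6$), and using compatibility of Frobenius characteristic polynomials together with Brauer-group/centralizer descent as exploited in \cite{Hu23a} and \cite{DWW24}, each constituent assembles---after a uniform finite extension of the coefficient field $E$---into a strictly compatible subsystem of the corresponding dimension. These subsystems remain regular and strictly compatible, so I can apply the irreducibility statements for dimensions $\le 5$: \cite{Hu23a} for $n=4$, \cite{DWW24} for $n=5$, and the classical cases $n\le 3$.

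The final step is to transport this subsystem structure back to $\lambda=\lambda_0$ and compare it with the ambient irreducible monodromy identified in the second paragraph. Each partition of $6$ (namely $5{+}1$, $4{+}2$, $4{+}1{+}1$, $3{+}3$, $3{+}2{+}1$, $3{+}1{+}1{+}1$, $2{+}2{+}2$, $2{+}2{+}1{+}1$, $2{+}1{+}1{+}1{+}1$, $1{+}1{+}1{+}1{+}1{+}1$) must be shown to be inconsistent with the fact that $\mathbf{G}_{\lambda_0}^{\der}$ acts irreducibly on $\overline{E_{\lambda_0}}^6$ with the previously pinned-down formal bi-character. I expect the main obstacle to be the balanced partition $3{+}3$ (and secondarily $2{+}2{+}2$): the two constituents there can be Galois-conjugate or twist-equivalent, making the Brauer-group descent to compatible subsystems most delicate; the exclusion requires a careful control of whether $\rho_{\lambda_0}$ might arise as an induced representation from a Lie-irreducible compatible system on a quadratic (resp.\ cubic) extension of $\mathbb{Q}$---this is precisely the phenomenon foreshadowed by part (iii) of \autoref{Sulfur}, and must be shown to be compatible with irreducibility at almost every $\lambda$.
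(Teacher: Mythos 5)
Your overall strategy---assume infinitely many $\rho_\lambda$ reducible, use $\lambda$-independence of the formal (bi-)character to constrain the possible Jordan--H\"older dimension patterns, assemble the constituents into lower-dimensional compatible subsystems, and then contradict the irreducibility of $\rho_{\lambda_0}$---is in spirit aligned with what the paper does in the Lie-irreducible branch (\S3.2), but several of the steps you wave through are in fact the technical core of the argument, and a few are wrong as stated.

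First, the opening move ``replace $\mathbb{Q}$ by a fixed finite extension so that each $\mathbf{G}_\lambda$ is connected (a harmless modification)'' is not harmless. If $\rho_{\lambda_0}$ is irreducible but not Lie-irreducible (i.e.\ $\mathbf{G}_{\lambda_0}$ is disconnected), then restricting to $\mathbb{Q}^{\conn}$ \emph{destroys} the irreducibility hypothesis entirely, and you have lost the statement you are trying to prove. This is exactly the content of the non-Lie-irreducible case of the paper (\S3.1), which occupies about half the proof: there one writes $\rho_{\lambda_0}=\Ind_K^{\mathbb{Q}}\sigma$ via Patrikis' theorem (Proposition~\ref{Beryllium}) and argues via the $\lambda$-independence of the Dirichlet density $d(\rho_\lambda)$ of split primes (Proposition~\ref{Boron}), a tool absent from your sketch. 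Moreover, the base field being $\mathbb{Q}$ is genuinely used later: Proposition~\ref{Sodium} invokes the proved Serre modularity conjecture for odd $2$-dimensional mod $\ell$ representations of $\mathrm{Gal}_{\mathbb{Q}}$, which is unavailable over a general totally real field, let alone over $\mathbb{Q}^{\conn}$.

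Second, your claim that ``the formal bi-character $\ldots$ pins down the isomorphism class of the derived group $\mathbf{G}_{\lambda_0}^{\der}$'' is false in the one case where it matters most: $\mathrm{Sp}_6$ and $\mathrm{SO}_6$ (types (c) and (d) in the paper) have the \emph{same} formal bi-character $\{x^{\pm1},y^{\pm1},z^{\pm1}\}$, so $\lambda$-independence of the formal bi-character alone cannot fix the semisimple type. The paper resolves this ambiguity in \S3.2.3 by combining the $\lambda$-independence of the parity of the similitude character (Lemma~\ref{Manganese}) with the potential automorphy theorem; none of this is in your outline. Relatedly, your list of irreducible connected reductive subgroups of $\mathrm{GL}_6$ with a regular torus omits $\mathrm{SL}_2\times\mathrm{SL}_2$ acting via $\mathrm{std}\otimes\mathrm{Sym}^2(\mathrm{std})$ (the paper's type (b)), which is precisely the case requiring Liu--Yu's $\mathrm{GO}_4$-type tensor decomposition and is the one obstruction to extending the theorem to general totally real fields (Remark~\ref{Potassius}).

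Third, the assertion that each Jordan--H\"older constituent ``assembles\ldots into a strictly compatible subsystem of the corresponding dimension'' via ``Brauer-group/centralizer descent'' is not a soft step. For constituents of dimension $4$ or $5$ this is exactly where one must invoke the potential automorphy theorem (Theorem~\ref{Carbon}), which in turn requires verifying residual irreducibility after restriction to $\mathbb{Q}(\zeta_\ell)$ (condition (iv)). Controlling this for all but finitely many $\lambda$ is the purpose of the algebraic envelope machinery (Theorem~\ref{Magnesium}), Proposition~\ref{Neon}, and the $\mathrm{GO}_4$-decomposition; citing $\cite{Hu23a}$ and $\cite{DWW24}$ for dimensions $4$ and $5$ presupposes that you already have compatible subsystems in those dimensions, which is the step you are trying to justify. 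You correctly identify the balanced partitions $3{+}3$ and $2{+}2{+}2$ and the induced case as the delicate residue, but you leave them unresolved; those are treated in \S3.1.2--3.1.3 with the density argument and a potential automorphy input for the $4$-dimensional induced block.
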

	
	\begin{cor}\label{main2}
		Let $\{\rho_{\lambda}:\mathrm{Gal}_{\mathbb{Q}}\to\text{GL}_6(\overline{E_{\lambda}})\}$ be a 6-dimensional $E$-rational pure, essentially self-dual, totally odd, regular strictly compatible system of $\mathbb{Q}$. Then there exists an integer $r\in\mathbb{N}$ such that:
		\begin{enumerate}[(i)]
			\item For all but finitely many $\lambda$, the represnetation decomposes as
			$$\rho_{\lambda}=\sigma_{\lambda,1}\oplus\sigma_{\lambda,2}\oplus\cdots\oplus\sigma_{\lambda,r}$$
			where each $\sigma_{\lambda,i}$ is irreducible.
			\item Each family $\{\sigma_{\lambda,i}\}$ extends to a strictly compatible system after possibly enlarging the coefficients.
		\end{enumerate}
	\end{cor}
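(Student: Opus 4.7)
\textbf{Proof plan for \autoref{main2}.} The plan is to combine \autoref{main}, the previously established lower-dimensional irreducibility-propagation results cited in the introduction, and a potential-automorphy extension principle that globalizes each irreducible constituent into a compatible system. First I fix any prime $\lambda_0$ and write
\begin{equation*}
\rho_{\lambda_0} = \tau_1 \oplus \cdots \oplus \tau_r
\end{equation*}
with each $\tau_i$ absolutely irreducible of dimension $d_i$ and $\sum d_i = 6$. Distinct Hodge--Tate weights, purity, and oddness descend to the $\tau_i$; although essential self-duality need not, the irreducible summands of an essentially self-dual representation always pair up so that each $\tau_i$ is either itself essentially self-dual or dual (up to twist) to some $\tau_j$. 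Under the hypotheses, the potential automorphy machinery of Barnet-Lamb--Gee--Geraghty--Taylor applies to $\rho_{\lambda_0}$, so each $\tau_i$ is itself potentially automorphic and extends to a strictly compatible system $\{\sigma_{\lambda,i}\}$ of dimension $d_i$, regular and pure.

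Next I apply the appropriate irreducibility-propagation result to each compatible system $\{\sigma_{\lambda,i}\}$ according to its dimension: for $d_i = 1$ the statement is trivial; for $d_i \in \{2,3,4,5\}$ the results for lower-dimensional regular strictly compatible systems of $\mathbb{Q}$ surveyed in \cite[Section 1.2]{Hu23a} and \cite{DWW24} apply; and for $d_i = 6$ (which forces $r=1$) \autoref{main} itself applies. In every case $\sigma_{\lambda,i}$ is absolutely irreducible for all but finitely many $\lambda$, and a single $r$ is determined by the initial decomposition at $\lambda_0$.

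Finally I verify that $\rho_\lambda \cong \bigoplus_i \sigma_{\lambda,i}$ for all but finitely many $\lambda$. At $\lambda_0$ the two sides agree by construction; by strict compatibility, their Frobenius characteristic polynomials at almost all unramified primes $p$ are determined by those at $\lambda_0$ and therefore agree for every $\lambda$. Brauer--Nesbitt together with Frobenius semisimplicity of pure potentially automorphic representations gives the claimed isomorphism at each such $\lambda$, which is (i); and (ii) is built into the construction of the $\{\sigma_{\lambda,i}\}$.

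\emph{Main obstacle.} The critical step is the extension in the first paragraph: producing a strictly compatible system from a single irreducible summand $\tau_i$. This step is what consumes the pure, essentially self-dual, totally odd, and regular hypotheses of \autoref{main2} (absent from \autoref{main}); it relies on potential automorphy to give $L$-functions and hence the compatible family. All other steps are either dimension-by-dimension invocations of known results or formal consequences of strict compatibility.
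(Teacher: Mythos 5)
Your overall strategy (decompose at a single $\lambda_0$, extend each irreducible summand to a compatible system via potential automorphy, then patch everything together by Chebotarev and Brauer--Nesbitt) is the same rough route the paper takes, but the critical extension step in your first paragraph contains gaps that are precisely the content of the paper's argument. You correctly note that an irreducible summand $\tau_i$ of an essentially self-dual $\rho_{\lambda_0}$ need not itself be essentially self-dual --- it may only be dual (up to twist) to some other summand $\tau_j$. But you then nonetheless assert that potential automorphy applies and gives a compatible family for each $\tau_i$. This is exactly where the argument can break: a $3$-dimensional summand with semisimple type $\mathrm{SL}_3$ is not essentially self-dual, so \autoref{Carbon} does not apply to it, and applying \autoref{Carbon} to the full $\rho_{\lambda_0}$ is impossible because $\rho_{\lambda_0}$ is reducible and fails the residual-irreducibility hypothesis (iv). You supply no mechanism to extend such a summand. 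The paper must rule this case out: \autoref{Titanium} shows that under the purity hypothesis no $\rho_{\lambda_0}$ can contain a $3$-dimensional component of semisimple type $\mathrm{SL}_3$, via a Dirichlet-density argument together with \cite[Lemma 1.6]{PT15}. Your proposal never engages this obstruction, and it is not cosmetic --- without \autoref{Titanium} (or a substitute) the extension step is unproven.

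Second, even for the summands to which \autoref{Carbon} does formally apply, you must verify condition (iv) there: $\tau_i|_{\mathrm{Gal}_{\mathbb{Q}(\zeta_\ell)}}$ must be residually irreducible. You ``fix any prime $\lambda_0$''; that is not sufficient --- the prime must be selected. The paper selects primes using \autoref{Neon} (Dirichlet density one choice of $\ell$ guaranteeing residual irreducibility after the cyclotomic restriction of every irreducible subrepresentation) for the $4$- and $5$-dimensional pieces, and uses \autoref{Magnesium} or the induced-representation analysis of Section \ref{3.1.3} for the smaller ones (see \autoref{Calcium}). Your plan flags the extension step as the ``main obstacle'' but attributes its resolution to ``potential automorphy plus the hypotheses''; in fact both the exclusion of $\mathrm{SL}_3$-type summands and the residual-irreducibility verification require genuine arguments, and the paper's careful choice of $r$ (the smallest integer for which infinitely many $\rho_\lambda$ have all components of dimension $\le r$) replaces your arbitrary $\lambda_0$ precisely to make these checks go through.
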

	
	We arrange the article as follows. In \autoref{section2} we collect the necessary preliminaries for the proofs, including certain $\ell$-independence properties, big image results, weak abelian direct summands of $\ell$-adic representations, potential automorphy theorems which serve as the main tool in the proof, and certain results on semisimple reductions. \autoref{section3} contains the proofs of the main results. The proof of \autoref{main} is divided into two cases according to whether $\rho_{\lambda_0}$ is Lie-irreducible or not. They are treated separately in Section \ref{3.1} and \ref{3.2}. Finally, Section \ref{3.3} gives the proof of \autoref{main2}.
	
	\section{Preliminaries}\label{section2}
	
	\subsection{Compatible systems}
	\begin{defn}\label{Hydrogen}
		Let $K$ be a number field. An $n$-dimensional $E$-rational strictly compatible system of $\mathrm{Gal}_K$ is a datum
		$$\mathcal{M}=\left(E,S,\{P_v(T)\},\{\rho_{\lambda}\},\{\text{HT}_{\tau}\},\{\text{WD}_v\}\right)$$
		
		where:
		\begin{itemize}
			\item $E$ is a number field.
			\item $S$ is a finite set of primes of $K$, called the exceptional set.
			\item $P_v(T)\in E[T]$ is a degree $n$ monic polynomial for each prime $v\not\in S$ of $K$.
			\item $\rho_{\lambda}:\mathrm{Gal}_K\to\text{GL}_n(\overline{E_{\lambda}})$ is an $n$-dimensional continuous semisimple $\lambda$-adic representation.
			\item $\text{HT}_{\tau}$ is a multiset of $n$ integers for each embedding $\tau: K\hookrightarrow\overline{E}$.
			\item $\text{WD}_v$ is a semisimple Weil-Deligne representation of $K_v$ for each prime $v$ of $K$.
		\end{itemize}
		
		such that:
		\begin{enumerate}[(i)]
			\item Each $\rho_{\lambda}$ is geometric in the sense of Fontaine-Mazur with exceptional set $S$, that is
			\begin{itemize}
				\item $\rho_{\lambda}$ is unramified outside $S\cup S_{\lambda}$, where $S_{\lambda}$ consists of the primes of $K$ lying above the rational prime under $\lambda$;
				\item If $v\in S_{\lambda}$, then $\rho_{\lambda}|_{\mathrm{Gal}_{K_v}}$ is de Rham.
			\end{itemize}
			Moreover, $\rho_{\lambda}|_{\mathrm{Gal}_{K_v}}$ is crystalline when $v\in S_{\lambda}$ and $v\not\in S$.
			\item For each $v\not\in S\cup S_{\lambda}$, the characteristic polynomial of $\rho_{\lambda}(\text{Frob}_v)$ equals $P_v(T)$. 
			\item For each embedding $\tau:K\hookrightarrow\overline{E}$ and each $E$-embedding $\overline{E}\hookrightarrow\overline{E}_{\lambda}$, the Hodge-Tate weights of $\rho_{\lambda}$ is $\text{HT}_{\tau}$.
			\item For each $v\not\in S_{\lambda}$ and each isomorphism $\iota:\overline{E}_{\lambda}\cong\mathbb{C}$, the Frobenius semisimplified Weil-Deligne representation $\iota\text{WD}(\rho_{\lambda}|_{\mathrm{Gal}_{K_v}})^{\text{F-ss}}$ is isomorphic to $\text{WD}_v$.
		\end{enumerate}
	\end{defn}
	
	\begin{defn}
		An $E$-rational strictly compatible system $\mathcal{M}$ is called pure of weight $\omega$, if for each $v\not\in S$, for each root $\alpha$ of $P_v(T)$ in $\overline{E}$, and for each embedding $\iota:\overline{E}\hookrightarrow\mathbb{C}$ we have $|\iota(\alpha)|^2=(\#\kappa(v))^{\omega}$, where $\kappa(v)$ denotes the residue field of $K$ at $v$. The system $\mathcal{M}$ is called pure if it is pure of weight $\omega$ for some integer $\omega$.
	\end{defn}
	
	\begin{defn}
		An $E$-rational strictly compatible system $\mathcal{M}$ is called regular if for any embedding $\tau:K\hookrightarrow\overline{E}$, the $\tau$-Hodge-Tate weights are distinct.
	\end{defn}
	
	Under regularity condition, one can descend the coefficients of a strictly compatible system to $E_{\lambda}$ after enlarging $E$:
	\begin{lem}\cite[Lemma 5.3.1.(3)]{BLGGT14}\label{Chlorine}
		Let $\{\rho_{\lambda}\}$ be an $E$-rational strictly compatible system of $K$. Suppose it is regular, then after replacing $E$ by a finite extension, we may assume that for any open subgroup $H$ of $\mathrm{Gal}_K$, any place $\lambda$ and any $H$-subrepresentation $\sigma$ of $\rho_{\lambda}$, the representation $\sigma$ is defined over $E_{\lambda}$.
	\end{lem}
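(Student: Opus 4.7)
The claim is that after one finite enlargement of $E$, every semisimple $\overline{E_\lambda}$-valued $H$-subrepresentation of $\rho_\lambda$ actually descends to $E_\lambda$, uniformly in the prime $\lambda$ and the open subgroup $H \le \mathrm{Gal}_K$. Recall that for semisimple $\sigma$, such a descent holds iff (a) the isomorphism class of $\sigma$ is stable under $\mathrm{Gal}(\overline{E_\lambda}/E_\lambda)$, equivalently its character lies in $E_\lambda$, and (b) the associated Schur/Brauer obstructions of the irreducible constituents in $\mathrm{Br}(E_\lambda)$ vanish. My plan is to verify both uniformly, using regularity to kill (b).

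For (a), note that $\mathrm{tr}\,\rho_\lambda$ is $E_\lambda$-valued by Chebotarev, since $\mathrm{tr}(\rho_\lambda(\mathrm{Frob}_v)) \in E$ for all $v \notin S \cup S_\lambda$. For an open $H$, replace $H$ by its normal core $N \triangleleft \mathrm{Gal}_K$ (still open, since $[\mathrm{Gal}_K:H]$ is finite). By Clifford theory, every $H$-irreducible constituent of $\rho_\lambda|_H$ is induced from an irreducible $N$-constituent of $\rho_\lambda|_N$ along its stabilizer in $H$, so the character fields of $H$-constituents are controlled by those of the $N$-constituents together with the $\mathrm{Gal}_K/N$-permutation action. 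Since the Zariski closure $G_\lambda := \overline{\rho_\lambda(\mathrm{Gal}_K)}^{\mathrm{Zar}}$ has only finitely many closed subgroups of finite index containing its identity component $G_\lambda^{\circ}$, there are essentially finitely many possibilities for the $N$-decomposition; a single finite extension $E'/E$ splits all the resulting Galois orbits. The compatible-system structure (rationality of Frobenius characteristic polynomials, fixed exceptional set $S$) is what makes this enlargement $\lambda$-independent.

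For (b), I would show $\rho_\lambda|_H$ is automatically multiplicity-free, using regularity. Suppose an irreducible $H$-representation $\tau$ appeared with multiplicity $\ge 2$ in $\rho_\lambda|_H$. Pick any prime $v \mid \ell$ of $K$ with $v \notin S$, and let $L_v/K_v$ be the finite extension with $\mathrm{Gal}_{L_v} = H \cap \mathrm{Gal}_{K_v}$. Then $\rho_\lambda|_{\mathrm{Gal}_{L_v}}$ is crystalline, and for each embedding $L_v \hookrightarrow \overline{E_\lambda}$ extending an embedding $K_v \hookrightarrow \overline{E_\lambda}$, its Hodge--Tate weights coincide with those of $\rho_\lambda|_{\mathrm{Gal}_{K_v}}$ at the extended embedding, hence are distinct by regularity. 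But any HT weight of $\tau|_{\mathrm{Gal}_{L_v}}$ would then appear with multiplicity $\ge 2$ in $\rho_\lambda|_{\mathrm{Gal}_{L_v}}$, a contradiction. Combined with (a), multiplicity-freeness forces the endomorphism algebra of each isotypic component to be its character field, so the Schur indices are trivial and every constituent descends.

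The main obstacle is (a): producing a \emph{single} finite enlargement $E'/E$ that works simultaneously for all $\lambda$ and all open $H$. This requires using the compatible-system structure to bound the list of possible character fields of $H$-constituents in a $\lambda$-independent way---essentially by showing that the finitely many intermediate algebraic subgroups of $G_\lambda$ containing $G_\lambda^{\circ}$, and their representation-theoretic data, are controlled by a common finite extension of $E$. Once that single finite list of fields is absorbed into $E'$, the Clifford-theoretic combinatorics of (a) and the Hodge--Tate regularity input of (b) together yield the descent uniformly.
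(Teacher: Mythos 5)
The paper cites this as \cite[Lemma 5.3.1(3)]{BLGGT14} without giving a proof, so I compare your proposal against the published argument. Your part (b) -- regularity forces multiplicity-freeness of $\rho_\lambda|_H$ -- is correct and is the right starting input. But your part (a) takes a detour through Clifford theory, normal cores, and character fields that opens a genuine gap, which you yourself flag at the end: ``finitely many intermediate algebraic subgroups $\mathbf{G}_\lambda^\circ \subseteq G' \subseteq \mathbf{G}_\lambda$'' does not by itself produce a single finite extension $E'/E$ working for all $\lambda$ and $H$. Those intermediate groups and their representation-theoretic data vary a priori with $\lambda$, and the trace of an irreducible $N$-constituent of $\rho_\lambda|_N$ is not recoverable from $\mathrm{tr}\,\rho_\lambda$ alone; nothing in the sketch explains why the resulting character fields lie in a fixed finite extension of $E$ independently of $\lambda$. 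As written, the argument is incomplete at exactly the step you identify.

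The published argument bypasses this entirely by using regularity directly on subspaces rather than on characters. For an $H$-subspace $W$ of $\rho_\lambda$, each $\tau$-Hodge-Tate multiset $\mathrm{HT}_\tau(W)$ is a subset of the distinct multiset $\mathrm{HT}_\tau(\rho_\lambda)$, and the assignment $W \mapsto \mathrm{HT}_\tau(W)$ is \emph{injective} on $H$-subspaces: given $W_1,W_2$ with $\mathrm{HT}_\tau(W_1)=\mathrm{HT}_\tau(W_2)$, the exact sequence $0\to W_1\cap W_2\to W_1\oplus W_2\to W_1+W_2\to 0$ of de Rham $\mathrm{Gal}_{L_v}$-representations (with $L_v$ as in your (b)) together with distinctness of $\mathrm{HT}_\tau(W_1+W_2)$ forces $\mathrm{HT}_\tau(W_1\cap W_2)=\mathrm{HT}_\tau(W_1)$, hence $W_1=W_2$; your multiplicity-freeness in (b) is a special case. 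Now enlarge $E$ \emph{once} so that $E$ contains the Galois closure of $K/\mathbb{Q}$ and so that each $\rho_\lambda$ already descends to $E_\lambda$ (part (1) of the same BLGGT lemma) -- this enlargement is manifestly independent of $\lambda$ and $H$. Then every $\tau\colon K\hookrightarrow\overline{E}$ lands in $E$, so any $g\in\mathrm{Gal}(\overline{E_\lambda}/E_\lambda)$ fixes $\tau$, and the conjugate $W^g$ is again an $H$-subspace with $\mathrm{HT}_\tau(W^g)=\mathrm{HT}_{g^{-1}\tau}(W)=\mathrm{HT}_\tau(W)$. Injectivity gives $W^g=W$, so $W$ is Galois-stable and hence defined over $E_\lambda$. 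I suggest replacing your step (a) with this direct Hodge-Tate argument; it closes the gap and makes the Clifford-theoretic machinery unnecessary.
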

	
	Some of the results we use hold under conditions weaker than those in \autoref{Hydrogen}. We include this definition for accuracy.
	\begin{defn}
		Let $K$ be a number field. An $n$-dimensional $E$-rational Serre compatible system of $\mathrm{Gal}_K$ is a datum
		$$\mathcal{M}=\left(E,S,\{p_v(T)\},\{\rho_{\lambda}\}\right)$$
		where:
		\begin{itemize}
			\item $E$ is a number field.
			\item $S$ is a finite set of primes of $K$, called the exceptional set.
			\item $P_v(T)\in E[T]$ is a degree $n$ monic polynomial for each prime $v\not\in S$ of $K$.
			\item $\rho_{\lambda}:\mathrm{Gal}_K\to\text{GL}_n(E_{\lambda})$ is an $n$-dimensional continuous semisimple $\lambda$-adic representation.
		\end{itemize}
		such that:
		\begin{enumerate}[(i)]
			\item $\rho_{\lambda}$ is unramified outside $S\cup S_{\lambda}$, where $S_{\lambda}$ consists of the primes of $K$ lying above the rational prime under $\lambda$.
			\item For each $v\not\in S\cup S_{\lambda}$, the characteristic polynomial of $\rho_{\lambda}(\text{Frob}_v)$ equals $P_v(T)$. 
		\end{enumerate}
	\end{defn}
	
	\subsection{$\lambda$-independence}
	
	\begin{defn}
		Given a semisimple $\ell$-adic Galois representation $$\rho:\mathrm{Gal}_K\to\text{GL}_n(E_{\lambda})$$
		its algebraic monodromy group $\textbf{G}$ is defined as the Zariski closure of its image inside the algebraic group $\text{GL}_{n,E_{\lambda}}$.
	\end{defn}
	
	\begin{lem}\label{Helium}
		Let $\rho$ be a semisimple $\ell$-adic Galois representation, and let $\sigma$ be a subquotient of $\rho$. Then the algebraic monodromy groups of $\rho$ and of $\rho\oplus\sigma$ coincide.
	\end{lem}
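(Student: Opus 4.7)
The plan is to reduce the computation of $\mathbf{G}_{\rho\oplus\sigma}$ to that of $\mathbf{G}_\rho$ by a graph-of-morphism construction. The key input is the semisimplicity of $\rho$, which implies that every subquotient of $\rho$ is in fact isomorphic to a direct summand. So I would begin by replacing $\sigma$ with an isomorphic copy realised on a $\mathrm{Gal}_K$-stable subspace $W$ of the ambient space $V$ of $\rho$. This step does not affect the algebraic monodromy group of $\rho\oplus\sigma$ up to conjugation in the ambient $\mathrm{GL}$, which is all that is needed.

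Next I would observe that $W$ is preserved not only by $\rho(\mathrm{Gal}_K)$ but by the entire algebraic monodromy group $\mathbf{G}_\rho$: the stabiliser of $W$ in $\mathrm{GL}(V)$ is a Zariski closed subgroup containing $\rho(\mathrm{Gal}_K)$, hence contains its Zariski closure $\mathbf{G}_\rho$. Restriction to $W$ then yields a morphism of algebraic groups $\phi\colon \mathbf{G}_\rho\to \mathrm{GL}(W)$, and the associated graph map $\Delta=(\mathrm{id},\phi)\colon \mathbf{G}_\rho\to \mathrm{GL}(V)\times\mathrm{GL}(W)\subset \mathrm{GL}(V\oplus W)$ is a closed immersion, since the projection to the first factor provides a left inverse.

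Finally, by construction $\rho\oplus\sigma=\Delta\circ\rho$, so the image $(\rho\oplus\sigma)(\mathrm{Gal}_K)=\Delta(\rho(\mathrm{Gal}_K))$ is Zariski dense in the closed subgroup $\Delta(\mathbf{G}_\rho)$. Taking Zariski closures gives $\mathbf{G}_{\rho\oplus\sigma}=\Delta(\mathbf{G}_\rho)$, and the first projection identifies this canonically with $\mathbf{G}_\rho$. I do not expect any genuine obstacle in carrying out this argument; the only delicate point is that without the semisimplicity hypothesis a subquotient need not embed into $\rho$ as a subrepresentation, in which case the graph construction above would break down.
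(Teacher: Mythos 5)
Your argument is correct, and it takes a genuinely different route from the one in the paper. The paper's proof is a two-line Tannakian argument: the algebraic monodromy group of $\rho$ is the fundamental group of the Tannakian subcategory generated by $\rho$, and since $\sigma$ is a subquotient of $\rho$ while $\rho$ is a subrepresentation of $\rho\oplus\sigma$, the two representations generate the same Tannakian subcategory, hence have the same fundamental group. Your proof is more hands-on: you use semisimplicity to promote $\sigma$ to a direct summand $W\subseteq V$, note that $\mathbf{G}_\rho$ stabilises $W$ (because the stabiliser is Zariski closed and contains the dense image of Galois), form the restriction morphism $\phi\colon\mathbf{G}_\rho\to\mathrm{GL}(W)$, and identify $\mathbf{G}_{\rho\oplus\sigma}$ with the graph of $\phi$. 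The Tannakian argument buys generality (it works verbatim for any subquotient without invoking semisimplicity, since Tannakian subcategories are closed under subquotients) and brevity, at the cost of quoting a larger machine. Your construction buys transparency: it exhibits a concrete isomorphism $\mathbf{G}_{\rho\oplus\sigma}\xrightarrow{\sim}\mathbf{G}_\rho$ induced by the first projection, and every step is elementary Zariski topology. In fact your approach does not really need semisimplicity either: for a general subquotient $W''/W'$ with $W'\subseteq W''\subseteq V$ stable, $\mathbf{G}_\rho$ stabilises both $W'$ and $W''$ and therefore acts on $W''/W'$, so $\phi$ can still be defined and the graph argument goes through; your closing caveat is more cautious than necessary. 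One small stylistic remark: when you say the first projection ``provides a left inverse,'' what is literally true is that $p_1\circ\Delta$ is the closed immersion $\mathbf{G}_\rho\hookrightarrow\mathrm{GL}(V)$, from which it follows that $\Delta$ is a closed immersion (a morphism whose composite with a separated morphism is a closed immersion is itself a closed immersion); phrasing it via the closedness of the graph of a morphism into a separated target is cleaner.
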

	\begin{proof}
		The algebraic monodromy group of $\rho$ is the fundamental group of the Tannakian category generated by $\rho$ inside the category of $\ell$-adic Galois representations. The result follows from the fact that $\rho$ and $\rho\oplus\sigma$ generate the same Tannakian category.
	\end{proof}
	
	Due to the semisimplicity of $\rho$, the identity component $\textbf{G}^{\circ}$ of its algebraic monodromy group is a reductive group. We denote by $\textbf{G}^{\text{der}}$ the derived subgroup of $\textbf{G}^{\circ}$, which is semisimple. To describe $\lambda$-independent properties of compatible systems, the following notions are needed.
	
	\begin{defn}
		Let $F$ be a field and let $\textbf{G}\subseteq\mathrm{GL}_{n,F}$ be a reductive subgroup.
		\begin{enumerate}[(i)]
			\item Let $\textbf{T}$ be a maximal torus of $\textbf{G}\times\overline{F}$. The formal character of $\textbf{G}$ is the conjugacy class of $\textbf{T}$ in $\mathrm{GL}_{n,\overline{F}}$.
			\item Let $\textbf{T}'$ the maximal torus of $\textbf{G}^{\mathrm{der}}\times\overline{F}$. The formal bi-character of $\textbf{G}$ is the conjugacy class of the chain $\textbf{T}'\subseteq\textbf{T}$ in $\mathrm{GL}_{n,\overline{F}}$.
			\item Given two fields $F_1,F_2$ and two reductive groups $\textbf{G}_i\subseteq\mathrm{GL}_{n_i,F_i}, i=1,2$. We say they have the same formal character (resp. formal bi-character), if $n_1=n_2=n$ and there exists a split $\mathbb{Z}$-subtorus $\textbf{T}_{\mathbb{Z}}\subseteq\mathrm{GL}_{n,\mathbb{Z}}$ (resp. a chain of split $\mathbb{Z}$-subtori $\textbf{T}_{\mathbb{Z}}'\subseteq\textbf{T}_{\mathbb{Z}}\subseteq\mathrm{GL}_{n,\mathbb{Z}}$) such that $\textbf{T}_{\mathbb{Z}}\times\overline{F_i}$ (resp. $\textbf{T}_{\mathbb{Z}}'\times\overline{F}_i\subseteq\textbf{T}_{\mathbb{Z}}\times\overline{F}_i$) is contained in the formal character (resp. formal bi-character) of $\textbf{G}_i$ for each $i$. This defines an equivalence relation on formal characters (resp. formal bi-characters) of reductive subgroups of general linear groups over arbitrary fields.
			\item Let $\{F_i\}$ be a family of fields and let $\{\textbf{G}_i\subseteq\mathrm{GL}_{n,F_i}\}$ be a family of reductive groups. We say they have the same formal character (resp. same formal bi-character) if they belong to a single equivalence class defined in (iii). We say they have bounded formal characters (resp. bounded formal bi-characters) if they belong to finitely many such equivalence classes.
		\end{enumerate}
	\end{defn}
	
	We have the following standard $\lambda$-independence results on algebraic monodromy groups.
	\begin{thm}\cite{Se81}, \cite{Se84}, \cite[Theorem 3.19]{Hu13}\label{Lithium}.
		Given an $E$-rational Serre compatible system $\{\rho_{\lambda}:\mathrm{Gal}_K\to\text{GL}_n(E_{\lambda})\}$. Denote by $\textbf{G}_{\lambda}$ the algebraic monodromy group of $\rho_{\lambda}\otimes\overline{E_{\lambda}}$
		\begin{enumerate}[(i)]
			\item The component group $\pi_0(\textbf{G}_{\lambda})=\textbf{G}_{\lambda}/\textbf{G}_{\lambda}^{\circ}$ is independent of $\lambda$. In particular, the connectedness of $\textbf{G}_{\lambda}$ is independent of $\lambda$.
			\item The formal bi-character of the tautological representation $\textbf{G}_{\lambda}\hookrightarrow\text{GL}_{n,\overline{E_{\lambda}}}$ and hence the rank and semisimple rank of $\textbf{G}_{\lambda}$, are independent of $\lambda$.
		\end{enumerate}
	\end{thm}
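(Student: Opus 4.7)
The plan is to follow Serre and Hui in translating structural invariants of $\textbf{G}_\lambda$ into data extracted from the characteristic polynomials $P_v(T)\in E[T]$, exploiting that these are manifestly $\lambda$-independent. Chebotarev density is the basic bridge: the set $\{\rho_\lambda(\text{Frob}_v)\}_{v\notin S\cup S_\lambda}$ is Zariski-dense in $\textbf{G}_\lambda$, so any Galois-invariant structure on $\textbf{G}_\lambda$ is pinned down by Frobenius data.

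For (i), I would realize $\pi_0(\textbf{G}_\lambda)=\Gal(K^\conn_\lambda/K)$, where $K^\conn_\lambda$ is the fixed field of $\rho_\lambda^{-1}(\textbf{G}_\lambda^\circ(\overline{E_\lambda}))$. By Chebotarev, $K^\conn_\lambda$ is determined by the set of primes $v\notin S\cup S_\lambda$ with $\rho_\lambda(\text{Frob}_v)\in\textbf{G}_\lambda^\circ$. The key reduction is to detect this membership from $P_v(T)$ alone: semisimple conjugacy classes in $\GL_n$ are distinguished by their characteristic polynomial, and the intersection of each connected component of $\textbf{G}_\lambda$ with the semisimple locus is cut out by polynomial conditions on coefficients defined over $E$. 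Hence $K^\conn_\lambda$ depends only on $\{P_v(T)\}$, not on $\lambda$, which gives $\lambda$-independence of $\pi_0(\textbf{G}_\lambda)$.

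For (ii), I would invoke Serre's Frobenius torus theorem: for a density-one set of primes $v$, the Zariski closure of $\langle\rho_\lambda(\text{Frob}_v)\rangle$ has identity component a maximal torus $\textbf{T}_{\lambda,v}\subseteq\textbf{G}_\lambda^\circ$. The conjugacy class of the embedding $\textbf{T}_{\lambda,v}\hookrightarrow\GL_{n,\overline{E_\lambda}}$ on the tautological representation is encoded by the multiset of eigenvalues of $\rho_\lambda(\text{Frob}_v)$, namely the roots of $P_v(T)$; since $P_v(T)\in E[T]$ is $\lambda$-independent, so is the formal character. For the bi-character, identify the character lattice of $\textbf{T}_{\lambda,v}/\textbf{T}'_{\lambda,v}$ with the sublattice of multiplicative relations among the eigenvalues that are forced to be constant (up to roots of unity) across varying $v$ — data again encoded by the $E$-rational family $\{P_v(T)\}$. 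The main obstacle is exactly this last matching: ensuring the $\Z$-structure of eigenvalue relations lines up across different $\lambda$ requires working inside a common field containing all Frobenius eigenvalues and a Bogomolov-type density argument to guarantee that the generic Frobenius torus is genuinely maximal, which is the content of \cite[Theorem 3.19]{Hu13}.
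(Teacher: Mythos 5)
The paper does not prove this theorem; it is imported wholesale from Serre's letters (for the component group and formal character) and \cite[Theorem~3.19]{Hu13} (for the bi-character refinement), so there is no internal proof to compare your sketch against. That said, your sketch has a genuine gap in part~(i).

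You write that the key reduction for~(i) is that ``the intersection of each connected component of $\textbf{G}_\lambda$ with the semisimple locus is cut out by polynomial conditions on coefficients defined over $E$.'' This is where the argument breaks. The group $\textbf{G}_\lambda$ and its component decomposition are a priori objects over $\overline{E_\lambda}$ that vary with $\lambda$; there is no reason the defining equations of the components are $E$-rational, and even if they were, asserting that the same system of equations cuts out the components of $\textbf{G}_{\lambda'}$ for a different $\lambda'$ is essentially the statement you are trying to prove, so the reduction is circular. Relatedly, the fact that semisimple $\GL_n$-conjugacy classes are determined by characteristic polynomials does not tell you which component of $\textbf{G}_\lambda$ a given semisimple element lies in --- that depends on $\textbf{G}_\lambda$, not just on the $\GL_n$-class. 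Serre's actual route is more delicate: he detects the field $K^{\conn}_\lambda$ via an inherently $\lambda$-independent invariant attached to each $v\notin S$ (the finitely generated multiplicative group in $\overline{E}^*$ generated by the roots of $P_v(T)$, and in particular its rank), combined with the Frobenius torus theorem to show that for a density-one set of $v$ this rank equals $\rk\textbf{G}_\lambda$ and hence $\text{Frob}_v\in\textbf{G}_\lambda^{\circ}$; one then needs a further argument (ramification control on $K^{\conn}_\lambda/K$ plus a pigeonhole over finitely many candidate extensions) to pin down $K^{\conn}_\lambda$ itself, not merely a density-one approximation to it. Your sketch collapses this into a single step that does not hold as written.

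For part~(ii), the formal-character half of your sketch is on the mark: the Frobenius torus theorem plus the $E$-rationality of $P_v(T)$ gives $\lambda$-independence of the conjugacy class of $\textbf{T}_\lambda\hto\GL_{n,\overline{E_\lambda}}$. The bi-character half, which is the actual content of \cite[Theorem~3.19]{Hu13}, is described only loosely. The difficulty is not matching eigenvalue relations ``up to roots of unity'' across $v$; it is identifying, $\lambda$-independently, the subtorus $\textbf{T}'_\lambda\subseteq\textbf{T}_\lambda$ coming from the derived group. Hui does this by relating the quotient $\textbf{T}_\lambda/\textbf{T}'_\lambda$ to the abelianization $\textbf{G}_\lambda^{\circ}\to\textbf{G}_\lambda^{\ab}$, which in turn corresponds to an abelian (locally algebraic, CM-type) sub-compatible-system whose formal character is $\lambda$-independent by Serre's theory of abelian $\ell$-adic representations. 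A ``Bogomolov-type density argument'' is not the missing ingredient here; the abelianization step is.
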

	
	Denote by $(\overline{\rho}^{\text{ss}},\overline{V}^{\text{ss}})$ the semisimple reduction of a $\lambda$-adic Galois representation $(\rho,V)$. For investigating residual irreducibility, the following notion is needed.
	\begin{defn/thm}\cite[Theorem 3.1]{Hu23b}, \cite[Proposition 2.11]{Hu23a}.
		Given an $n$-dimensional regular $E$-rational strictly compatible system $\{(\rho_{\lambda},V_{\lambda})\}$ of a number field $K$. By \autoref{Chlorine}, after enlarging $E$, we may write the system as $\{\rho_{\lambda}:\mathrm{Gal}_K\to\mathrm{GL}_n(E_{\lambda})\}$. Write $d=[E:\mathbb{Q}]$. By restriction of scalars, we have an $nd$-dimensional $\mathbb{Q}$-rational compatible system:
		$$\left\{\rho_{\ell}:=\bigoplus_{\lambda|\ell}\rho_{\lambda}:\mathrm{Gal}_K\to\left(\mathrm{Res}_{E/\mathbb{Q}}\right)\left(\mathbb{Q}_{\ell}\right)\subseteq\mathrm{GL}_{nd}(\mathbb{Q}_{\ell})\right\}_{\ell}$$
		\begin{enumerate}[(i)]
			\item  There exists a finite Galois extension $L/K$ such that, for each sufficiently large $\ell$, up to isomorphism there exists a unique connected reductive group $$\underline{G}_{\ell}\subseteq\text{GL}_{nd,\mathbb{F}_{\ell}}$$
			called the algebraic envelope, satisfying
			\begin{itemize}
				\item $\overline{\rho_{\ell}}^{\text{ss}}(\mathrm{Gal}_L)$ is a subgroup of $\underline{G}_{\ell}(\mathbb{F}_{\ell})$ whose index is uniformly bounded as $\ell$ varies.
				\item $\underline{G}_{\ell}$ acts semisimply on the ambient space.
				\item The formal characters of the embeddings $\underline{G}_{\ell}\hookrightarrow\text{GL}_{nd,\mathbb{F}}$ are bounded as $\lambda$ varies.
			\end{itemize}
			\item For all but finitely many $\lambda$, let $\ell$ be the rational prime below $\lambda$ and let $(\sigma, W)$ be a subrepresentation of $\rho_{\lambda}\otimes\overline{\mathbb{Q}}_{\ell}$. Denote by $\underline{G}_W$ the image of $\underline{G}_{\ell}$ in $\text{GL}_{\overline{W}^{\text{ss}}}$, which is called the algebraic envelope of $W$.
		\end{enumerate}
	\end{defn/thm}
	
	\begin{thm}\cite[Theorem 3.12]{Hu23b}\label{Magnesium}
		Given an $n$-dimensional regular $E$-rational strictly compatible system $\{\rho_{\lambda}\}$ of a number field $K$. Except for finitely many $\lambda$, for any subrepresentation $(\sigma,W)$ of $\rho_{\lambda}$ we have:
		\begin{enumerate}[(i)]
			\item The algebraic envelope $\underline{G}_W$ and the algebraic monodromy $\textbf{G}_W$ of $\sigma$ have the same formal bi-character.
			\item There exists a finite Galois extension $L/K$, independent of $W$, such that the commutants of $\overline{\sigma}_{\lambda}^{\text{ss}}(\mathrm{Gal}_L)$ and $\underline{G}_{W}$ (resp. $[\overline{\sigma}_{\lambda}^{\text{ss}}(\mathrm{Gal}_L),\overline{\sigma}_{\lambda}^{\text{ss}}(\mathrm{Gal}_L)]$ and $\underline{G}_{W}^{\text{ss}}$) in $\text{End}(\overline{W})^{\text{ss}}$ are equal. In particular, $\overline{\sigma}_{\lambda}^{\text{ss}}(\mathrm{Gal}_L)$ (resp. $[\overline{\sigma}_{\lambda}^{\text{ss}}(\mathrm{Gal}_L),\overline{\sigma}_{\lambda}^{\text{ss}}(\mathrm{Gal}_L)]$) acts irreducibly on $\overline{W}^{\text{ss}}$ if and only if $\underline{G}_{W}$ (resp. $\underline{G}_{W}^{\text{der}}$) acts irreducibly on $\overline{W}^{\text{ss}}$.
			\item If $\textbf{G}_W$ is of type A and $\textbf{G}_W^{\circ}\to\mathrm{GL}_W$ is irreducible (in particular for Lie-irreducible representations of dimension $\le3$), then $\underline{G}_W$ and thus $\mathrm{Gal}_K$ (resp. $\mathrm{Gal}_{K^{\text{ab}}}$) act irreducibly on $\overline{W}^{\mathrm{ss}}$.
			\item If $\sigma$ is irreducible and of type A, then it is residually irreducible.
		\end{enumerate}
	\end{thm}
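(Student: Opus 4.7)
The plan is to tackle the four parts sequentially by combining Serre's Frobenius-torus method with the $\lambda$-independence of formal bi-characters from \autoref{Lithium} and the defining properties of the algebraic envelope; throughout I fix a subrepresentation $(\sigma,W)$ of $\rho_\lambda\otimes\overline{\mathbb{Q}}_{\ell}$ and work with $\lambda$ outside a finite exceptional set depending only on $\mathcal{M}$. For (i), I invoke Serre's theorem on Frobenius tori (via Chebotarev) to find places $v$ with $\rho_\lambda(\mathrm{Frob}_v)$ having Zariski closure a maximal torus of $\textbf{G}_W^{\circ}$. Because the characteristic polynomials $P_v(T)\in E[T]$ have coefficients of bounded denominator, for $\lambda$ of large residue characteristic the eigenvalues of $\overline{\sigma}_\lambda^{\mathrm{ss}}(\mathrm{Frob}_v)$ on $\overline{W}^{\mathrm{ss}}$ are precisely the $\lambda$-adic reductions of the eigenvalues of $\sigma(\mathrm{Frob}_v)$ on $W$. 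The multiplicative relations among these eigenvalues encode both the ambient torus $\textbf{T}$ and the derived subtorus $\textbf{T}'$, so they are preserved under reduction and identify the formal bi-characters of $\underline{G}_W$ and $\textbf{G}_W$.

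For (ii), I take $L/K$ furnished by the envelope construction, so that $\overline{\rho_\ell}^{\mathrm{ss}}(\mathrm{Gal}_L)\subseteq\underline{G}_\ell(\mathbb{F}_\ell)$ with uniformly bounded index. Projecting to $\underline{G}_W$, which acts semisimply on $\overline{W}^{\mathrm{ss}}$, Jacobson density identifies the commutant of the Galois image with the commutant of $\underline{G}_W$ inside $\mathrm{End}(\overline{W}^{\mathrm{ss}})$: both subsets generate the same associative subalgebra of endomorphisms once the finite-index subgroup is known to be Zariski dense in $\underline{G}_W$. The derived-subgroup version follows by applying the same reasoning to the commutator subgroups. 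Irreducibility on $\overline{W}^{\mathrm{ss}}$ is then equivalent to the commutant being scalar, which gives the final equivalence in (ii).

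For (iii), the hypothesis that $\textbf{G}_W$ is of type A with $\textbf{G}_W^{\circ}$ acting irreducibly identifies $(\textbf{G}_W^{\circ},W)$, up to center, with a tensor product of standard $\mathrm{SL}_{n_i}$-representations. The matching of formal bi-characters from (i) transfers this structure to $\underline{G}_W^{\circ}$ in characteristic $\ell$; for $\ell$ large, the analogous tensor product of standard Weyl modules remains irreducible, so $\underline{G}_W$ acts irreducibly on $\overline{W}^{\mathrm{ss}}$. Via (ii), this forces $\mathrm{Gal}_K$ (and $\mathrm{Gal}_{K^{\mathrm{ab}}}$ through the commutator-subgroup version) to act irreducibly. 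Part (iv) is then immediate: if $\sigma$ is irreducible of type A, then $\textbf{G}_W^{\circ}\to\mathrm{GL}_W$ is irreducible, and (iii) promotes this to residual irreducibility of $\sigma$.

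The main obstacle I expect is ensuring that the exceptional set of $\lambda$ is genuinely uniform across subrepresentations $W$. As $\lambda$ varies, the collection of subrepresentations changes, and one must leverage the boundedness of formal bi-characters of $\{\underline{G}_\ell\}$ to cut down the possible pairs $(\underline{G}_W,\overline{W}^{\mathrm{ss}})$ to finitely many isomorphism classes. Excluding the small primes where integral group-scheme models degenerate, where exceptional isogenies among semisimple groups interfere with the type-A identification, or where the index $[\underline{G}_\ell(\mathbb{F}_\ell):\overline{\rho_\ell}^{\mathrm{ss}}(\mathrm{Gal}_L)]$ cannot be controlled uniformly in $\ell$, constitutes the bulk of the delicate bookkeeping.
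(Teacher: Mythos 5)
The paper cites this theorem from \cite{Hu23b} and \cite{Hu23a} without reproducing a proof, so there is no in-paper argument to compare against; I will assess your proposal on its own merits, and there are two genuine gaps.

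The most serious is in (iv). You deduce (iv) from (iii) by treating ``$\sigma$ irreducible of type A'' as if it implied that $\textbf{G}_W^{\circ}\to\mathrm{GL}_W$ is irreducible, but this implication is false: an irreducible $\sigma$ that is induced from a proper finite extension $L'/K$ has $\textbf{G}_W^{\circ}$ acting reducibly on $W$, so (iii) is simply inapplicable. The induced type-A case needs its own argument — roughly, apply (iii) to the Lie-irreducible constituent over the inducing field and then show that induction carries residual irreducibility back up, using the boundedness of $[L':K]$ and Mackey theory — and your proof contains none of this.

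There is also a gap in (i). Your Frobenius-torus argument only gives the easy inequality: reducing the eigenvalues of $\sigma(\mathrm{Frob}_v)$ modulo $\lambda$ can only introduce new multiplicative relations, never destroy old ones, so what you show is that the torus of $\underline{G}_W$ is \emph{no bigger} than the reduction of the Frobenius torus of $\textbf{G}_W$. The content of the statement is the reverse inequality — that $\underline{G}_W$ is not too small — and that rests on the Nori-theoretic construction of $\underline{G}_\ell$ and the bounded-formal-character property baked into the envelope, which you mention only as an afterthought in your final paragraph. Without this input you have no lower bound on the rank of $\underline{G}_W$, and the claimed equality of formal bi-characters does not follow. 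A smaller imprecision in (iii): irreducible type-A representations include Weyl modules such as $\mathrm{Sym}^k$ of $\mathrm{SL}_2$, not only ``tensor products of standard $\mathrm{SL}_{n_i}$-representations''; the mod-$\ell$ reduction argument still works for $\ell$ large since the highest weights are bounded by $n$, but the statement as written is wrong. Part (ii) is essentially fine: the bounded-index condition plus a point count forces Zariski density of the Galois image in $\underline{G}_W$ for $\ell\gg0$, after which the commutant comparison is as you say.
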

	
	We mention a recent result on $\ell$-independence of component groups of algebraic envelopes, which is an analogue of Serre's result \autoref{Lithium}(i). Given a smooth projective variety $X$ over some number field $K$, then the semisimplifications of the $\ell$-adic cohomology
	$$V_{\ell}:=\text{H}_{\text{\'et}}^i(X_{\overline{K}},\mathbb{Q}_{\ell})^{\text{ss}}$$
	(after extending scalars to $\overline{\mathbb{Q}}_{\ell}$) form a $\mathbb{Q}$-rational strictly compatible system of $K$. Such a system is called a \emph{compatible system arising from varieties}. For $\ell$ sufficiently large, define the full algebraic envelope of $V_{\ell}$ to be
	$$\widehat{\underline{G}_{\ell}}:=\text{Im}\overline{\rho_{\ell}}^{ss}\cdot\underline{G}_{\ell}$$
	\begin{thm}\cite[Theorem 1.3]{DH24}.
		Let $\{\rho_\ell:\mathrm{Gal}_K\to\text{GL}_n(\mathbb{Q}_\ell)\}_\ell$ be a semisimple compatible system arising from some variety, with algebraic monodromy groups $\{\textbf{G}_\ell\}_\ell$ 
		and full algebraic envelopes $\{\widehat{\underline{G}}_\ell\}_{\ell\gg0}$.
		Let $K^{\text{conn}}/K$ be the finite Galois extension corresponding to $\textbf{G}_\ell/\textbf{G}_\ell^\circ$ 
		which is independent of $\ell$.
		For all sufficiently large $\ell$, the finite Galois extension corresponding to the morphism 
		$$\mathrm{Gal}_K\stackrel{\bar\rho_\ell^{\ss}}{\longrightarrow}\widehat{\underline{G}}_\ell(\mathbb{F}_\ell)\to 
		\widehat{\underline{G}}_\ell(\mathbb{F}_\ell)/\underline{G}_\ell(\mathbb{F}_\ell)$$
		is $K^{\text{conn}}/K$. In particular, the component groups 
		$\pi_0(\textbf{G}_\ell)=\textbf{G}_\ell/\textbf{G}_\ell^\circ$ and $\pi_0(\widehat{\underline{G}}_\ell)=\widehat{\underline{G}}_\ell/\underline{G}_\ell$ 
		are naturally isomorphic for all $\ell\gg0$.
	\end{thm}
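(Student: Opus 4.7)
The plan is to recast the statement as the equality $K_\ell = K^{\text{conn}}$ of two finite Galois extensions of $K$, where $K_\ell$ denotes the fixed field of $H_\ell := (\bar\rho_\ell^{\ss})^{-1}(\underline{G}_\ell(\mathbb{F}_\ell))$. Since $\underline{G}_\ell(\mathbb{F}_\ell)$ is normal in $\widehat{\underline{G}}_\ell(\mathbb{F}_\ell) = \mathrm{Im}(\bar\rho_\ell^{\ss}) \cdot \underline{G}_\ell(\mathbb{F}_\ell)$ by construction, $H_\ell$ is a normal subgroup of $\mathrm{Gal}_K$ and $\mathrm{Gal}(K_\ell/K)$ is canonically identified with $\widehat{\underline{G}}_\ell(\mathbb{F}_\ell)/\underline{G}_\ell(\mathbb{F}_\ell)$. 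The asserted isomorphism of component groups then follows formally from the equality $K_\ell = K^{\text{conn}}$.

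For the inclusion $K_\ell \subseteq K^{\text{conn}}$ I would show directly that $\bar\rho_\ell^{\ss}(\mathrm{Gal}_{K^{\text{conn}}}) \subseteq \underline{G}_\ell(\mathbb{F}_\ell)$ for large $\ell$. By the defining property of $K^{\text{conn}}$, the image $\rho_\ell(\mathrm{Gal}_{K^{\text{conn}}})$ is Zariski-dense in $\textbf{G}_\ell^\circ$. Picking a Galois-stable $\mathbb{Z}_\ell$-lattice and taking the schematic closure produces a flat $\mathbb{Z}_\ell$-group scheme with generic fibre $\textbf{G}_\ell^\circ$ whose special fibre, for all sufficiently large $\ell$, is a connected reductive group of the same type; the uniqueness-up-to-bounded-index characterisation of $\underline{G}_\ell$ in its defining theorem then forces this special fibre to sit inside $\underline{G}_\ell$, translating the characteristic-$0$ connectedness on $\mathrm{Gal}_{K^{\text{conn}}}$ into the mod-$\ell$ world.

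The reverse containment $K^{\text{conn}} \subseteq K_\ell$ is the principal obstacle, since one must prevent the characteristic-$0$ component structure from collapsing upon reduction modulo $\ell$. I would combine \autoref{Lithium}(i), which identifies $\mathrm{Gal}(K^{\text{conn}}/K)$ with the $\ell$-independent finite group $\pi_0(\textbf{G}_\ell)$, with a Chebotarev argument applied to a fixed finite list of unramified Frobenii $\mathrm{Frob}_{v_1}, \ldots, \mathrm{Frob}_{v_r}$ whose classes in $\pi_0(\textbf{G}_\ell)$ exhaust the non-identity components. The task reduces to checking that the semisimple reduction of $\rho_\ell(\mathrm{Frob}_{v_i})$ cannot accidentally fall into $\underline{G}_\ell(\mathbb{F}_\ell)$ when $\ell$ is large. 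This is precisely where the variety hypothesis is indispensable: smooth-and-proper base change, together with Deligne's purity, furnishes $\mathbb{Z}_\ell$-integral models of the full monodromy group (not merely its identity component) at all but finitely many $\ell$, whose special fibres retain the component group $\pi_0(\textbf{G}_\ell)$ and so rule out such accidental collapses.
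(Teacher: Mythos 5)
The theorem is cited from \cite[Theorem 1.3]{DH24} and is not proved in this paper, so there is no in-paper proof to compare against; I evaluate the proposal on its own terms. Your overall framing is sound: recast the claim as the equality $K_\ell = K^{\text{conn}}$, use normality of $\underline{G}_\ell(\mathbb{F}_\ell)$ in $\widehat{\underline{G}}_\ell(\mathbb{F}_\ell)$ to identify $\mathrm{Gal}(K_\ell/K)$ with $\widehat{\underline{G}}_\ell(\mathbb{F}_\ell)/\underline{G}_\ell(\mathbb{F}_\ell)$, and observe that once the forward containment is known, the reverse one reduces to finitely many Frobenius classes because $\underline{G}_\ell(\mathbb{F}_\ell)$ is normal and the cosets of $\mathrm{Gal}_{K^{\text{conn}}}$ in $\mathrm{Gal}_K$ can be represented by a fixed finite set of Frobenii.

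The difficulty is that neither containment is actually established. For $K_\ell\subseteq K^{\text{conn}}$ you invoke the $\mathbb{Z}_\ell$-schematic closure of $\textbf{G}_\ell^\circ$ and assert its special fibre is connected reductive of the same type for $\ell\gg0$; but for a fixed lattice at a single $\ell$ the schematic closure can have disconnected or non-reductive special fibre, and promoting this to ``all $\ell\gg0$'' is exactly the kind of $\ell$-uniformity that the Nori/Larsen--Pink/Hui envelope machinery is designed to produce --- you cannot obtain it by a soft schematic-closure argument and then appeal to uniqueness, since $\underline{G}_\ell$ is characterised relative to $\bar\rho_\ell^{\ss}(\mathrm{Gal}_L)$ (with bounded index) rather than by an abstract uniqueness property among reductive subgroups of $\mathrm{GL}_{n,\mathbb{F}_\ell}$. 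You also silently identify the reduction of a Galois-stable lattice with $\bar\rho_\ell^{\ss}$, which is the semisimplification and is not the same object when the reduction is not semisimple. For $K^{\text{conn}}\subseteq K_\ell$, the claim that smooth--proper base change and Deligne purity ``furnish $\mathbb{Z}_\ell$-integral models of the full monodromy group whose special fibres retain $\pi_0(\textbf{G}_\ell)$'' is not a theorem: base change gives almost-everywhere unramifiedness and purity controls archimedean absolute values of Frobenius eigenvalues, but neither produces an integral model of $\textbf{G}_\ell$ with controlled component scheme. The real content here --- showing that if $\rho_\ell(\mathrm{Frob}_v)\notin\textbf{G}_\ell^\circ(\overline{\mathbb{Q}}_\ell)$ then $\bar\rho_\ell^{\ss}(\mathrm{Frob}_v)\notin\underline{G}_\ell(\mathbb{F}_\ell)$ for $\ell\gg0$ --- requires a concrete $\ell$-independent discriminator (for instance, an invariant of the Frobenius conjugacy class computable from the $\ell$-independent characteristic polynomial $P_v(T)$ that separates the components, together with control on the size or structure of $\underline{G}_\ell(\mathbb{F}_\ell)$ that forbids accidental containment), and this is precisely the step your sketch leaves blank.
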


	\subsection{Lie-irreduciblility}
	
	\begin{defn}\label{Vanadium}
		A continuous Galois representation $\rho:\mathrm{Gal}_K\to\text{GL}_n(\overline{E_{\lambda}})$ is called Lie-irreducible if for any finite extension $L/K$, the restriction $\rho|_{\mathrm{Gal}_L}$ is irreducible.
	\end{defn}
	
	Given a finite extension of fields $L/K$ and a Galois representation $\sigma$ of $L$, we write the induced representation as
	$$\text{Ind}_L^K\sigma:=\text{Ind}_{\mathrm{Gal}_L}^{\mathrm{Gal}_K}\sigma$$
	\begin{prop}\cite[Proposition 3.4.1, Lemma 3.4.6]{Pa19}\label{Beryllium}.
		Given an irreducible Hodge-Tate $\lambda$-adic Galois representation $\rho:\mathrm{Gal}_K\to\mathrm{GL}_n(E_{\lambda})$ of some number field $K$. If for each embedding $\tau:K\hookrightarrow\overline{E}$ the $\tau$-Hodge-Tate weights are distinct, then either $\rho$ is Lie-irreducible, or is induced from a Lie-irreducible representation $\sigma$ of some finite extension $L/K$:
		$$\rho=\mathrm{Ind}_L^K\sigma$$
	\end{prop}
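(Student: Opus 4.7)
The plan is to study $\rho$ via its algebraic monodromy group $\mathbf{G} \subseteq \mathrm{GL}(V)$, where $V$ is the underlying representation space of $\rho$. Let $\mathbf{G}^{\circ}$ denote the identity component of $\mathbf{G}$ and let $L_0/K$ be the finite Galois extension for which $\rho(\mathrm{Gal}_{L_0})$ is Zariski dense in $\mathbf{G}^{\circ}$ (so that $\mathrm{Gal}(L_0/K) \cong \mathbf{G}/\mathbf{G}^{\circ}$).

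The first step is a dichotomy: $\rho$ is Lie-irreducible if and only if $V$ is irreducible as a $\mathbf{G}^{\circ}$-module. For any finite extension $K'/K$, the group $\mathrm{Gal}_{K'}\cap\mathrm{Gal}_{L_0}$ has finite index in $\mathrm{Gal}_{L_0}$, hence its image under $\rho$ remains Zariski dense in $\mathbf{G}^{\circ}$, so if $\mathbf{G}^{\circ}$ acts irreducibly on $V$ then $\rho|_{\mathrm{Gal}_{K'}}$ is irreducible; conversely, any non-trivial $\mathbf{G}^{\circ}$-invariant direct summand of $V$ gives a proper $\mathrm{Gal}_{L_0}$-subrepresentation. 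This immediately disposes of the Lie-irreducible case.

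Otherwise, I would decompose $V = V_1 \oplus \cdots \oplus V_r$ into irreducible $\mathbf{G}^{\circ}$-submodules. Here is where the regularity hypothesis enters: each $V_i$ is automatically a $\mathrm{Gal}_{L_0}$-subrepresentation, so it carries a well-defined Hodge-Tate weight multiset, and these multisets partition the HT weights of $\rho$. Distinctness of the HT weights of $\rho$ therefore forces the $V_i$ to be pairwise non-isomorphic as $\mathbf{G}^{\circ}$-modules. The finite group $\mathbf{G}/\mathbf{G}^{\circ} \cong \mathrm{Gal}(L_0/K)$ permutes the $V_i$, and irreducibility of $\rho$ as a $\mathbf{G}$-module forces this permutation to be transitive. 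Let $H \subseteq \mathrm{Gal}_K$ be the preimage of the stabilizer of $V_1$ in $\mathrm{Gal}(L_0/K)$, and set $L := \overline{K}^{H}$. Then $V_1$ is a $\mathrm{Gal}_L$-stable subspace defining a subrepresentation $\sigma$ of $\rho|_{\mathrm{Gal}_L}$, and Mackey/Frobenius reciprocity gives $\rho \cong \mathrm{Ind}_L^K \sigma$.

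It remains to verify that $\sigma$ is Lie-irreducible. Its algebraic monodromy group $\mathbf{H} \subseteq \mathrm{GL}(V_1)$ has identity component equal to the image of $\mathbf{G}^{\circ}$ in $\mathrm{GL}(V_1)$, which acts irreducibly on $V_1$ by construction; applying the first-step dichotomy to $\sigma$ therefore yields Lie-irreducibility. The main obstacle — and the sole place where the distinct Hodge-Tate weights assumption is used — is ensuring the $V_i$ are pairwise non-isomorphic. Without distinctness one could have $\mathbf{G}^{\circ}$-isotypic pieces of multiplicity greater than one; the clean transitive permutation on isomorphism classes would then have to be replaced by an action on isotypic components, and the resulting $\sigma$ on an isotypic block would fail to be $\mathbf{G}^{\circ}$-irreducible, breaking the final verification of Lie-irreducibility.
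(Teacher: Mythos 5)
Your argument is correct and follows essentially the same route as Patrikis's original proof (Proposition 3.4.1 and Lemma 3.4.6 of \cite{Pa19}), which the paper cites without reproducing: pass to the algebraic monodromy group $\mathbf{G}$, use regularity of the Hodge--Tate weights to force the $\mathbf{G}^{\circ}$-irreducible constituents to be pairwise non-isomorphic isotypic pieces permuted by $\pi_0(\mathbf{G})$, and induce from the stabilizer of one constituent. The dichotomy ``Lie-irreducible $\Leftrightarrow$ $\mathbf{G}^{\circ}$-irreducible'' and the Frobenius-reciprocity dimension count are exactly as in the cited source.
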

	
	Hence a regular Hodge-Tate semisimple Galois representation $\rho$ can be written as
	\begin{equation}\label{4}
	\rho=\oplus_i\mathrm{Ind}_{K_i}^K\sigma_i
	\end{equation}
	where each $\sigma_i$ is a Lie-irreducible representation of $K_i$. Denote by $\text{Spl}(\rho)$ the set of primes of $K$ that have a split factor in at least one of the extensions $K_i/K$. Then the usual formula for the trace of an induced representation shows that the set of primes of $K$ on which the trace of $\rho$ is nonzero is contained in $\text{Spl}(\rho)$. Moreover, \cite[Proposition 3.4.9.(1)]{Pa19} shows the Dirichlet density of these two sets coincide. Hence from the compatibility of the system we obtain:
	\begin{prop}\label{Boron}
		Given a regular Hodge-Tate $E$-rational Serre compatible system $\{\rho_{\lambda}\}$ of some number field $K$. For each $\lambda$, consider the decomposition of $\rho_{\lambda}$ guarateed by \autoref{Beryllium}:
		$$\rho_{\lambda}=\oplus_i\mathrm{Ind}_{K_{\lambda,i}}^K\sigma_{\lambda,i}$$
		with each $\sigma_i$ Lie-irreducible of $K_{\lambda,i}$. Denote by $d(\rho_{\lambda})$ the Dirichlet density of primes in $K$ that have a split factor in at least one of the extensions $K_{\lambda,i}/K$. Then this density is independent of $\lambda$.
	\end{prop}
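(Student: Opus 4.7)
The plan is to combine the Dirichlet density identity of \cite[Proposition 3.4.9.(1)]{Pa19}, recalled in the paragraph just before the statement, with the built-in $\lambda$-independence of the polynomials $P_v(T)\in E[T]$. By that identity, for each place $\lambda$ of $E$ the density $d(\rho_{\lambda})$ equals the Dirichlet density of
$$T(\rho_{\lambda}):=\{v\text{ prime of }K\mid v\notin S\cup S_{\lambda},\ \text{tr}(\rho_{\lambda}(\text{Frob}_v))\neq 0\}.$$
The Hodge-Tate regularity hypothesis is used precisely to invoke \autoref{Beryllium} and produce the Lie-irreducible decomposition $\rho_{\lambda}=\oplus_i\text{Ind}_{K_{\lambda,i}}^K\sigma_{\lambda,i}$ underlying Patrikis's density comparison. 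So it suffices to prove that the Dirichlet density of $T(\rho_{\lambda})$ is independent of $\lambda$.

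For $v\notin S\cup S_{\lambda}$, the compatibility axiom identifies $\text{tr}(\rho_{\lambda}(\text{Frob}_v))$ with $-a_{n-1}(v)$, where $a_{n-1}(v)\in E$ is the coefficient of $T^{n-1}$ in $P_v(T)$. In particular, whether this trace vanishes is determined by $P_v$ alone and is independent of $\lambda$, so
$$T(\rho_{\lambda})=\{v\notin S\cup S_{\lambda}\mid a_{n-1}(v)\neq 0\}.$$
For any two places $\lambda,\lambda'$ of $E$, the sets $T(\rho_{\lambda})$ and $T(\rho_{\lambda'})$ can differ only at primes in the finite set $S_{\lambda}\cup S_{\lambda'}$, and hence have equal Dirichlet density. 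This yields $d(\rho_{\lambda})=d(\rho_{\lambda'})$, proving the proposition.

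The argument is essentially formal once Patrikis's density identity is granted; the main thing to verify is that its hypotheses are satisfied uniformly in $\lambda$. This is the only genuine content to check, and it follows directly from the fact that the ``Hodge-Tate regular'' assumption is placed on the whole system, so that \autoref{Beryllium} applies to every single $\rho_{\lambda}$ with the same qualitative output. No $\lambda$-independence of the fields $K_{\lambda,i}$ themselves is needed; the point is that the $\lambda$-independence of the characteristic polynomials $P_v(T)$ forces the traces (and hence, after Patrikis, the split-density) to behave $\lambda$-independently.
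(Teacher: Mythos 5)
Your proof is correct and follows the same route the paper takes in the paragraph preceding the proposition: invoke Patrikis's density identity to convert $d(\rho_\lambda)$ into the density of trace-nonzero primes, then observe that the trace at $\text{Frob}_v$ is $-a_{n-1}(v)$, a coefficient of $P_v(T)$, hence $\lambda$-independent up to the finite sets $S_\lambda$. The closing remark that no $\lambda$-independence of the $K_{\lambda,i}$ is needed is a correct and useful observation, but it does not change the argument.
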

	
	\begin{cor}\label{Argon}
		In particular, under the above conditions, if some $\rho_{\lambda_0}$ is induced from a representation of a nontrivial extension of $K$, then no irreducible constituent of any $\rho_{\lambda}$ can be Lie-irreducible.
	\end{cor}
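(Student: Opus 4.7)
The plan is to exploit the $\lambda$-independence of the density $d(\rho_\lambda)$ in \autoref{Boron}. I would show that the hypothesis forces $d(\rho_{\lambda_0})<1$, while the existence of a Lie-irreducible over $K$ component of any $\rho_\lambda$ would force $d(\rho_\lambda)=1$; combining these gives the corollary.

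For the first inequality, write $\rho_{\lambda_0}=\Ind_L^K\tau$ with $L/K$ a nontrivial extension. Decomposing $\tau$ into irreducibles over $\Gal_L$, applying \autoref{Beryllium} to each piece, and using transitivity of induction together with the fact that induction commutes with direct sums, the unique \autoref{Beryllium}-decomposition
$$\rho_{\lambda_0}=\bigoplus_i\Ind_{K_{\lambda_0,i}}^K\sigma_{\lambda_0,i}$$
must have every $K_{\lambda_0,i}$ containing $L$; in particular each $K_{\lambda_0,i}$ is a nontrivial extension of $K$. Let $M/K$ be a finite Galois extension containing all the $K_{\lambda_0,i}$, write $H_i=\Gal(M/K_{\lambda_0,i})$ and $H=\Gal(M/L)$, and set $G=\Gal(M/K)$. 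A prime $v$ of $K$ unramified in $M$ has a split factor in some $K_{\lambda_0,i}$ iff a Frobenius at $v$ lies in $\bigcup_{i,g}gH_ig^{-1}\subseteq\bigcup_{g}gHg^{-1}$. Since the transitive action of $G$ on $G/H$, a set with at least two elements, admits a fixed-point-free element, the right-hand union is a proper subset of $G$; Chebotarev's density theorem then yields $d(\rho_{\lambda_0})<1$, and by \autoref{Boron} also $d(\rho_\lambda)<1$ for every $\lambda$.

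Now suppose toward a contradiction that some irreducible component of some $\rho_\lambda$ is Lie-irreducible as a representation of $\Gal_K$. Then this component appears in the \autoref{Beryllium}-decomposition of $\rho_\lambda$ with inducing field $K_{\lambda,i}=K$; every prime of $K$ trivially has a split factor in $K$, forcing $d(\rho_\lambda)=1$ and contradicting the previous paragraph. I expect the only non-routine point to be the propagation step at the start of the second paragraph, namely that being induced from $L$ forces every inducing field in the \autoref{Beryllium}-decomposition to lie above $L$; this hinges on uniqueness of that decomposition together with the standard behavior of induction under direct sum and composition of subgroups.
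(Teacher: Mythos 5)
Your overall strategy is exactly the intended one: compare the $\lambda$-independent density $d(\rho_\lambda)$ of \autoref{Boron} across the system, showing the inducing hypothesis forces $d(\rho_{\lambda_0})<1$ while a Lie-irreducible component of $\rho_\lambda$ forces $d(\rho_\lambda)=1$. The second half of your argument is fine. The problem is the first half, specifically the claim that every inducing field $K_{\lambda_0,i}$ in the \autoref{Beryllium}-decomposition of $\rho_{\lambda_0}$ contains $L$. Your sketch decomposes $\tau=\oplus_j\tau_j$, applies \autoref{Beryllium} to each $\tau_j$ to write $\tau_j=\Ind_{M_j}^L\mu_j$ with $\mu_j$ Lie-irreducible of $M_j\supseteq L$, and concludes $\rho_{\lambda_0}=\oplus_j\Ind_{M_j}^K\mu_j$ ``must be'' the \autoref{Beryllium}-decomposition by uniqueness. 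But the uniqueness in \autoref{Beryllium} concerns the canonical presentation of an \emph{irreducible} representation of $\Gal_K$ as induced from a Lie-irreducible one; there is no a priori reason each $\Ind_{M_j}^K\mu_j$ should be irreducible over $\Gal_K$. If some of them are reducible, their irreducible $\Gal_K$-constituents could in principle be induced from fields not containing $L$, and your appeal to uniqueness does not rule this out. Note also that you genuinely need the stronger containment $K_{\lambda_0,i}\supseteq L$ rather than just $K_{\lambda_0,i}\neq K$: the Jordan-type fixed-point-free argument you invoke controls a single union $\bigcup_g gHg^{-1}$, but a union over several distinct proper subgroups $H_i$ and their conjugates can cover all of $G$ (already for $G=(\Z/2)^2$), so the bound $\bigcup_{i,g}gH_ig^{-1}\subsetneq G$ requires a common proper over-group.

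A cleaner way to get $d(\rho_{\lambda_0})<1$, and the one the paper sets up in the paragraph preceding \autoref{Boron}, sidesteps the $K_{\lambda_0,i}$ entirely. By the cited result of Patrikis, $d(\rho_\lambda)$ equals the Dirichlet density of primes $v$ of $K$ with $\mathrm{tr}\,\rho_\lambda(\mathrm{Frob}_v)\neq 0$. For $\rho_{\lambda_0}=\Ind_L^K\tau$ the usual trace formula for an induced representation gives $\mathrm{tr}\,\rho_{\lambda_0}(\mathrm{Frob}_v)=0$ unless $v$ has a degree-one factor in $L$; since $[L:K]>1$, the density of such $v$ is strictly less than $1$ by the Chebotarev/Jordan argument applied to the single field $L$. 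Hence $d(\rho_{\lambda_0})<1$. Your contradiction with $d(\rho_\lambda)=1$ then goes through unchanged.
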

	
	\subsection{Essential self-duality and oddness}
	
	\begin{defn}
		Let $K$ be a totally real field. Let $E$ be a number field and let $\lambda$ be a prime of $E$. A $\lambda$-adic Galois representation $\rho:\text{Gal}_K\to\text{GL}_n(\overline{E}_{\lambda})$ is called essentially self-dual, if it either factors through $\text{GSp}_n(\overline{E}_{\lambda})$ or $\text{GO}_n(\overline{E}_{\lambda})$. In particular there exists some continuous character $\chi:\text{Gal}_K\to\overline{E}_{\lambda}^*$, called a similitude character, such that $\rho\cong\rho^{\vee}\otimes\chi$. 
	\end{defn}
	
	Note that for an essentially self-dual representation, the similitude character may not be unique.
	
	\begin{defn}
	A Galois character $\chi$ of a totally real field is called totally odd (resp.  totally even), if for any complex conjugation $c$ one has $\chi(c)=-1$ (resp. $\chi(c)=1$).
	\end{defn}
	
	\begin{lem}\label{Manganese}
		Given an $E$-rational Serre compatible system $\{\rho_{\lambda}\}$ and extend the scalars of each $\rho_{\lambda}$ to $\overline{E}_{\lambda}$. If for some $\lambda_0$ we have $\rho_{\lambda_0}\cong\rho_{\lambda_0}^{\vee}\otimes\chi_{\lambda_0}$, then for each $\lambda$ there exists a character $\chi_{\lambda}$ such that $\rho_{\lambda}\cong\rho_{\lambda}^{\vee}\otimes\chi_{\lambda}$. Moreover if the similitude character $\chi_{\lambda_0}$ is totally odd (resp. totally even), then one may choose each similitude character $\chi_{\lambda}$ to be totally odd (resp. totally even).
	\end{lem}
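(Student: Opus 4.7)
My plan is to extract the similitude character from the Hecke polynomials $P_v(T)$, transport it across $\lambda$ using Serre's theory of abelian $\lambda$-adic characters, and match both sides by Brauer--Nesbitt. The oddness/evenness will then persist because the Frobenius values of $\chi_\lambda$ live in $E$ and are independent of $\lambda$.

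First I would read off $\chi_{\lambda_0}$ at each unramified Frobenius. Writing $P_v(T)=\prod_i(T-\alpha_i)$ and $c_v:=\chi_{\lambda_0}(\mathrm{Frob}_v)$, the eigenvalues of $(\rho_{\lambda_0}^\vee\otimes\chi_{\lambda_0})(\mathrm{Frob}_v)$ are $c_v\alpha_i^{-1}$. Matching the coefficient of $T^{n-1}$ in the two characteristic polynomials gives $c_v=(\sum_i\alpha_i)/(\sum_i\alpha_i^{-1})$ at a Chebotarev-generic $v$ where the denominator is nonzero; more generally, the identity $c_v^k=e_k(\alpha)\,e_n(\alpha)/e_{n-k}(\alpha)$ for $1\le k\le n-1$ makes $c_v$ a universal rational function of the coefficients of $P_v$. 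Hence $c_v\in E$ and is $\lambda_0$-independent.

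Next, for each prime $\lambda$ of $E$, I would construct $\chi_\lambda$. Taking determinants of the given isomorphism yields $\chi_{\lambda_0}^n=(\det\rho_{\lambda_0})^2$, and $\{(\det\rho_\lambda)^2\}$ is an $E$-rational Serre compatible system of abelian characters. Using Serre's theory of abelian $\lambda$-adic representations, I would pick $\chi_\lambda$ as the unique continuous character $\mathrm{Gal}_K\to\overline{E}_\lambda^*$ satisfying $\chi_\lambda^n=(\det\rho_\lambda)^2$ together with $\chi_\lambda(\mathrm{Frob}_v)=c_v$ at (almost) every unramified $v$; the Frobenius-value condition kills the $n$-th-root-of-unity ambiguity. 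With $\chi_\lambda$ in hand, the characteristic polynomials of $\rho_\lambda$ and $\rho_\lambda^\vee\otimes\chi_\lambda$ at $\mathrm{Frob}_v$ agree for all $v\notin S\cup S_\lambda$, and since Serre compatible systems are by definition semisimple, Brauer--Nesbitt plus Chebotarev density forces $\rho_\lambda\cong\rho_\lambda^\vee\otimes\chi_\lambda$.

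For the oddness/evenness assertion, any complex conjugation $c$ satisfies $c^2=1$, so $\chi_\lambda(c)\in\{\pm1\}$; since Chebotarev density puts $c$ in the closure of Frobenius conjugacy classes on which $\chi_\lambda$ takes the $\lambda$-independent values $c_v\in E$, the sign $\chi_\lambda(c)$ equals $\chi_{\lambda_0}(c)$ for every $\lambda$. The hardest step, I expect, is the middle one: turning the $E$-valued Frobenius recipe $v\mapsto c_v$ into a bona fide continuous $\lambda$-adic character at every $\lambda$. Invoking Serre's $\lambda$-independence for abelian $\lambda$-adic representations cleanly enough to see that the prescribed Frobenius values together with the $n$-th-power relation $\chi_\lambda^n=(\det\rho_\lambda)^2$ pin down a unique continuous $\chi_\lambda$ is the delicate point.
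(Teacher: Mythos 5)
Your observation that $c_v\in E$ can be read off from the Hecke polynomial is correct on a density-one set of $v$ and is a nice way to see the $\lambda$-independence of the would-be Frobenius data, but it leaves a genuine gap at exactly the place you flag as ``the delicate point.'' Knowing that $v\mapsto c_v$ is $E$-valued and $\lambda$-independent does not by itself produce a continuous character $\chi_\lambda:\mathrm{Gal}_K\to\overline{E}_\lambda^{\times}$ at every other $\lambda$. Serre's dictionary between compatible systems of abelian characters and algebraic Hecke characters (morphisms out of Serre tori) takes as \emph{input} that the character you start with is locally algebraic (equivalently, de Rham); it is not a device for manufacturing a compatible system out of bare $E$-rational Frobenius data. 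Nor does the relation $\chi_\lambda^{n}=(\det\rho_\lambda)^{2}$ close the gap: an $n$-th root of a compatible system of characters need not exist as a compatible system (the cyclotomic character over $\mathbb{Q}$ has no square root within compatible systems), and choosing the ``right'' $n$-th root at each $\lambda$ to match the $c_v$ presupposes that such a matching character exists. One also cannot argue that $\chi_{\lambda_0}$ is a constituent of the compatible system $\{\rho_\lambda\otimes\rho_\lambda\}$ and ``therefore'' extends, since $\lambda$-independence of one-dimensional constituents is essentially the thing being proved.

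The paper's proof supplies precisely the missing input. It observes that $\rho_{\lambda_0}\otimes\rho_{\lambda_0}\cong\rho_{\lambda_0}\otimes\rho_{\lambda_0}^{\vee}\otimes\chi_{\lambda_0}$ and that $\rho_{\lambda_0}\otimes\rho_{\lambda_0}^{\vee}$ contains a trivial subrepresentation, so $\chi_{\lambda_0}$ is an abelian direct summand of $\rho_{\lambda_0}\otimes\rho_{\lambda_0}$; it then invokes the theorem of B\"ockle--Hui \cite[Theorem 1.1]{BH24} that such a weak abelian direct summand of a compatible system is automatically locally algebraic. Only at that point does Serre's theory apply to extend $\chi_{\lambda_0}$ to a compatible system $\{\chi_\lambda\}$; semisimplicity plus Chebotarev then give $\rho_\lambda\cong\rho_\lambda^{\vee}\otimes\chi_\lambda$, and the parity of $\chi_\lambda(c)$ is $\lambda$-independent because over $\mathbb{Q}$ such a $\chi_\lambda$ is a finite-order character times a power of the cyclotomic character. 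Your recipe $c_v^{k}=e_k(\alpha)e_n(\alpha)/e_{n-k}(\alpha)$ (and its occasional indeterminacy when several $e_k$ vanish) is a fine supplementary observation, but it does not substitute for the algebraicity theorem; you should cite it or prove an equivalent statement before appealing to Serre's machinery.
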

	\begin{proof}
		We have $\rho_{\lambda_0}\otimes\rho_{\lambda_0}^{\vee}\otimes\chi_{\lambda}\cong\rho_{\lambda_0}\otimes\rho_{\lambda_0}$. As $\rho_{\lambda_0}\otimes\rho_{\lambda_0}^{\vee}$ contains a trivial subrepresentation, $\chi_{\lambda_0}$ is a subrepresentation of $\rho_{\lambda_0}\otimes\rho_{\lambda_0}$. By \cite[Theorem 1.1]{BH25}, this character $\chi_{\lambda_0}$ is locally algebraic. Hence after possibly enlarging $E$, it extends to a compatible system $\{\chi_{\lambda}\}$. Then by compatibility, we have $\rho_{\lambda}\cong\rho_{\lambda}^{\vee}\otimes\chi_{\lambda}$ for each $\lambda$. Finally, as $\chi_{\lambda}$ can be written as a product of a finite image character and some power of cyclotomic character, $\chi_{\lambda}(c)$ is independent of $\lambda$ for a fixed complex conjugation $c$.
	\end{proof}
	
	The notion of total oddness is extended to higher-dimensional representations as follows.
	\begin{defn}
		Let $K$ be a totally real field and let $E$ be a number field.
		\begin{enumerate}[(i)]
			\item A $\lambda$-adic Galois representation $\rho:\mathrm{Gal}_K\to\mathrm{GL}_n(\overline{E}_{\lambda})$ is called essentially self-dual and totally odd, if it either factors through $\text{GSp}_n$ with a totally odd similitude character or factors through $\text{GO}_n$ with a totally even similitude character.
			\item An $E$-rational strictly compatible system $\{\rho_{\lambda}\}$ of $K$ is called essentially self-dual and totally odd, if each $\rho_{\lambda}$ is essentially self-dual and totally odd.
		\end{enumerate}
	\end{defn}
	\subsection{Weak abelian direct summands}\
	
	In what follows we assume that $K$ is a number field.
	\begin{defn}
		Let $\rho:\text{Gal}_K\to\text{GL}_n(F)$ and $\psi:\text{Gal}_K\to\text{GL}_m(F)$ be two semisimple $\ell$-adic representations that are unramified almost everywhere. Denote by $S_{\rho}$ and $S_{\psi}\subseteq\Sigma_K$ the sets of ramified places of $\rho$ and $\psi$, respectively. We say that $\psi$ is a weak direct summand of $\rho$ if the set
		$$S_{\psi|\rho}:=\{v\in\Sigma_K\backslash(S_{\rho}\cup S_{\psi}):\det(\psi(\mathrm{Frob}_v)-T\cdot\mathrm{Id})\text{\ divides\ }\det(\rho(\mathrm{Frob}_v)-T\cdot\mathrm{Id})\}$$
		has Dirichlet density one. If $\psi$ is abelian and is a weak direct summand of $\rho$, we say that $\psi$ is a weak abelian direct summand of $\rho$.
	\end{defn}
	
	Let $\textbf{G}\subseteq\text{GL}_{n,F}$ be a closed subgroup that acts irreducibly on $F^n$. Suppse that $\textbf{G}/Z(\textbf{G})$ is connected, where $Z(\textbf{G})\subseteq\textbf{G}$ is the center, and suppose that the multiplicity $n_0$ of the weight zero in the formal character of $\textbf{G}^{\text{der}}$ on $F^n$ is nonzero. By \cite[Lemma 2.6]{BH25} we have $\textbf{G}^{\text{der}}\cap\mathbb{G}_m$ is trivial and a direct product
	$$\textbf{G}=\textbf{G}^{\text{der}}\times\left(\textbf{G}\cap\mathbb{G}_m\right)$$
	We denote by $v_{\textbf{G}}:\text{Gal}_K\to\textbf{G}(F)\to\left(\textbf{G}\cap\mathbb{G}_m\right)(F)\subseteq F^{\times}$ the Galois character given by projection onto the second factor.
	\begin{prop}\cite[Proposition 2.9]{BH25}\label{Copper}
		Let $\rho:\mathrm{Gal}_K\to\mathrm{GL}_n(F)$ be a semisimple $\ell$-adic representation unramified outside a finite subset $S\subseteq\Sigma_K$ that can be written as $\rho=\oplus_{j\in J}\rho_j$ with absolutely irreducible representations $\rho_j:\mathrm{Gal}_K\to\mathrm{GL}_{r_j}(F)$. Suppose that for each $j\in J$ the group $\textbf{G}_{\rho_j}/Z(\textbf{G}_{\rho_j})$ is connected, where $\textbf{G}_{\rho_j}$ is the algebraic monodromy group of $\rho_k$. Denote by $n_{j,0}$ the multiplicity of the weight zero in the formal character of $\textbf{G}_{\rho_j}^{\mathrm{der}}$ on $F^{r_j}$, set $J_0=\{j\in J:n_{j,0}>0\}$ and for $j\in J_0$ define $\xi_j:=v_{\textbf{G}_{\rho_j}}$. Suppose that for all $j,j'\in J_0$ the character $\xi_{j'}\xi_j^{-1}$ is either trivial or has infinite order. Define a partition $J_0=\coprod_{a\in A}J_a$ by requiring $j,j'$ to lie in the same class if and only if $\xi_j=\xi_{j'}$, and write $\xi_a$ for $\xi_j$ if $j\in J_a$. Then $$\rho^{\mathrm{wab}}=\oplus_{a\in A}\xi_{a}^{\sum_{j\in J_a}n_{j,0}}$$ 
		is a degree $n_0=\sum_{j\in J}n_{j,0}$ weak abelian direct summand of $\rho$ such that any weak abelian direct summand $\psi$ of $\rho$ is a subrepresentation of $\rho^{\mathrm{wab}}$. We call $\rho^{\mathrm{wab}}$ the weak abelian part of $\rho$.
	\end{prop}
	
	\subsection{Automorphic Galois representations}\ 
	
	A standard method for showing that a Galois representation fits into a strictly compatible system is to show that it is automorphic.
	\begin{thm}\cite[Theorem C]{BLGGT14}\label{Carbon}.
		Suppose $K$ is a totally real field. Let $n$ be an integer and $\ell\ge 2(n+1)$ be a prime. Let
		$$\rho:\mathrm{Gal}_K\to\text{GL}_n(\overline{\mathbb{Q}_{\ell}})$$
		be a continuous representation. Suppose that the following conditions are satisfies.
		\begin{enumerate}[(i)]
			\item (Unramified almost everywhere) $\rho$ is unramified at all but finitely many primes.
			\item (Odd essential self-duality) Either $\rho$ maps to $\text{GSp}_n$ with totally odd similitude character or it maps to $\text{GO}_n$ with totally even similitude character.
			\item (Potential diagonalizability and regularity) $\rho$ is potentially diagonalizable (and hence potentially crystalline) at each prime $v$ of $K$ above $\ell$ and for each $\tau:K\hookrightarrow\overline{\mathbb{Q}_{\ell}}$ it has $n$ distinct $\tau$-Hodge-Tate weights.
			\item (Irreducibility) $\rho|_{\mathrm{Gal}_{K(\zeta_{\ell})}}$ is residually irreducible.
		\end{enumerate}
		Then we can find a finite Galois totally real extension $K'/K$ such that $\rho|_{\mathrm{Gal}_{K'}}$ is automorphic. Moreover $\rho$ is part of a strictly pure compatible system of $K$.
	\end{thm}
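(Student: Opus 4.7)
The plan is to prove potential automorphy via the Taylor-Wiles-Kisin patching method combined with a geometric construction of an auxiliary automorphic lift. The residual irreducibility of $\rho|_{\mathrm{Gal}_{K(\zeta_\ell)}}$ makes the deformation theory of $\bar\rho$ well-behaved: one sets up a global universal deformation ring parameterizing lifts of $\bar\rho$ that are unramified outside a finite set and satisfy the prescribed potentially diagonalizable local condition at the primes above $\ell$ with the given distinct Hodge-Tate weights. The potentially diagonalizable hypothesis is precisely what forces the local deformation rings at $\ell$ to be irreducible components of flat deformation rings whose generic fibres are formally smooth of the expected dimension, which is what the patching argument needs.

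The heart of the argument, and the main obstacle, is the production of an automorphic lift of $\bar\rho$ after a base change. Using a Moret-Bailly style moduli argument (in the spirit of Harris-Shepherd-Barron-Taylor), one produces a finite Galois totally real extension $K'/K$, an auxiliary prime, and an automorphic Galois representation $\rho'$ of $\mathrm{Gal}_{K'}$ whose residual representation matches $\bar\rho|_{\mathrm{Gal}_{K'}}$ after extending scalars; the odd essential self-duality is what makes $\rho$ compatible with the RACSDC setup on a definite unitary group attached to a CM quadratic extension of $K'$. One then strings $\rho|_{\mathrm{Gal}_{K'}}$ and $\rho'$ together through a chain of potentially diagonalizable congruences and invokes an automorphy lifting theorem, modelled on the $R = T$ identifications of Clozel-Harris-Taylor, BLGG, and the methods of BLGGT, to conclude that $\rho|_{\mathrm{Gal}_{K'}}$ itself is automorphic, associated to a regular algebraic essentially self-dual cuspidal automorphic representation $\Pi$ of $\mathrm{GL}_n(\mathbb{A}_{K'})$. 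Verifying the hypotheses of the lifting theorem in each link of the chain — in particular that Taylor-Wiles primes can be chosen and that the local deformation rings have the right dimension in the potentially diagonalizable regime — is the technical crux.

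For the final assertion, that $\rho$ is part of a strictly pure compatible system of $K$, one attaches Galois representations to $\Pi$ at every prime $\lambda$ using the now-standard constructions (Harris-Lan-Taylor-Thorne, Chenevier-Harris, Shin, Caraiani and others); this produces a strictly compatible family $\{\rho_\lambda|_{\mathrm{Gal}_{K'}}\}$, and purity together with the prescribed local Weil-Deligne data is inherited from the known local-global compatibility for $\Pi$. To descend this to a compatible system over $K$ itself, one matches characteristic polynomials at Frobenius elements lying above unramified primes of $K$ that split completely in $K'$ and invokes Brauer's theorem to express characters of $\mathrm{Gal}_K$ in terms of induced characters from solvable subextensions, thereby pinning down the $\lambda$-independent polynomials $P_v(T)$ predicted by the statement.
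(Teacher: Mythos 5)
This statement is quoted verbatim from \cite[Theorem C]{BLGGT14} and the paper offers no proof of it, so there is no internal argument to compare against. Your sketch is a faithful high-level account of the actual BLGGT14 strategy: residual automorphy via a Moret-Bailly/HSBT-style argument on the Dwork family, a chain of potentially diagonalizable congruences fed into an automorphy lifting theorem on definite unitary groups, and a Brauer-induction descent to assemble the strictly pure compatible system over $K$ from the automorphic data over solvable subextensions of $K'/K$. The sketch necessarily compresses the substantial technical core of BLGGT14 (the connectedness results for potentially diagonalizable local deformation rings, the precise automorphy lifting hypotheses, and the two-prime bookkeeping around the Moret-Bailly step), but as a summary of the cited proof it is accurate and does not misrepresent the route taken.
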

	
	Condition (iii) admits the following criterion.
	\begin{lem}\cite[Lemma 1.4.3(2)]{BLGGT14}\label{Zinc}
		Let $K/\mathbb{Q}_{\ell}$ be a finite unramified extension. If $\rho:\mathrm{Gal}_K\to\mathrm{GL}_n(\overline{\mathbb{Q}}_{\ell})$ is crystalline and for each embedding $\tau:K\hookrightarrow\overline{\mathbb{Q}}$ the Hodge-Tate weights $\mathrm{HT}_{\tau}(\rho)\subseteq[a_{\tau},a_{\tau}+\ell-2]$ for some integer $a_{\tau}$, then $\rho$ is potentially diagonalizable.
	\end{lem}
	
	The next result shows that, under mild technical conditions, 2-dimensional representations of totally real fields are automatically totally odd for sufficiently large $\lambda$.
	\begin{prop}\cite[Proposition 2.5]{CG13}\label{Nitrogen}.
		Let $K$ be a totally real field and $\ell>7$ a prime. Let
		$$\rho:\mathrm{Gal}_K\to\mathrm{GL}_2(\overline{\mathbb{Q}_{\ell}})$$
		be a continuous representation satisfying:
		\begin{enumerate}[(i)]
			\item $\rho$ is unramified outside a finite set of primes;
			\item $\text{Sym}^2\overline{\rho}|_{\mathrm{Gal}_{K(\zeta_{\ell})}}$ is irreducible;
			\item $\ell$ is unramified in $K$;
			\item For each place $v\mid\ell$ of $K$, the restriction $\rho|_{\mathrm{Gal}_{K_v}}$ is crystalline. Moreover, for each embedding $\tau:K_v\hookrightarrow\overline{\mathbb{Q}_{\ell}}$, the $\tau$-Hodge-Tate weights of $\rho|_{\mathrm{Gal}_{K_v}}$ are two distinct integers whose difference is less than $(\ell-1)/2$.
		\end{enumerate}
		Then the pair $(\rho,\det\rho)$ is essentially self-dual and totally odd.
	\end{prop}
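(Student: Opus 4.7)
The condition ``essentially self-dual'' is automatic for any $2$-dimensional representation, since $\rho\cong\rho^\vee\otimes\det\rho$ with similitude character $\det\rho$ making $\rho$ factor through $\mathrm{GL}_2=\mathrm{GSp}_2$. The real content of the proposition is therefore that $\det\rho$ is totally odd: for each complex conjugation $c\in\mathrm{Gal}_K$ one must show $\det\rho(c)=-1$. Since $c$ has order two, $\rho(c)$ is an involution with eigenvalues in $\{\pm 1\}$, and oddness is equivalent to the eigenvalues being $\{+1,-1\}$, or equivalently to $\mathrm{tr}\,\rho(c)=0$.

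My plan is to transfer this question to the three-dimensional representation $\mathrm{Sym}^2\rho$, which I will show satisfies all four hypotheses of \autoref{Carbon}. The isomorphism $(\mathrm{Sym}^2\rho)^{\vee}\cong\mathrm{Sym}^2\rho\otimes(\det\rho)^{-2}$ exhibits $\mathrm{Sym}^2\rho$ as factoring through $\mathrm{GO}_3$ with similitude $(\det\rho)^2$; this similitude is totally even regardless of the parity of $\det\rho$ because $\det\rho(c)^2=1$, so the ``odd essentially self-dual'' hypothesis of \autoref{Carbon} is automatically satisfied in the $\mathrm{GO}_3$ case. Hypothesis (i) transfers immediately. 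At each $v\mid\ell$, the weight bound $a-b<(\ell-1)/2$ from (iv) gives $\mathrm{Sym}^2\rho$ distinct Hodge-Tate weights $\{2a,\,a+b,\,2b\}$ of spread strictly less than $\ell-1$, keeping us in the Fontaine--Laffaille range and hence giving potential diagonalizability; crystallinity is preserved under taking symmetric powers. Hypothesis (ii) is precisely residual irreducibility of $\mathrm{Sym}^2\rho$ after restriction to $\mathrm{Gal}_{K(\zeta_\ell)}$. Finally $\ell>7$ gives $\ell\ge 11\ge 2(3+1)$, as required by \autoref{Carbon} in dimension $n=3$.

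Applying \autoref{Carbon} yields a totally real Galois extension $K'/K$ such that $\mathrm{Sym}^2\rho|_{\mathrm{Gal}_{K'}}$ is automorphic, cut out by a regular algebraic essentially self-dual cuspidal automorphic representation $\Pi$ of $\mathrm{GL}_3(\mathbb{A}_{K'})$; cuspidality follows because residual, hence global, irreducibility of $\mathrm{Sym}^2\rho$ prevents $\Pi$ from being a proper isobaric sum. For such $\Pi$, the known purity of complex conjugation on the attached Galois representation (Caraiani, Patrikis--Taylor, and successors) forces $\mathrm{tr}\,\mathrm{Sym}^2\rho(c')=\pm 1$ for every complex conjugation $c'$ of $K'$. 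Any complex conjugation $c\in\mathrm{Gal}_K$ also lies in $\mathrm{Gal}_{K'}$ (choose any real place of $K'$ lying above a given real place of $K$), so the same constraint $\mathrm{tr}\,\mathrm{Sym}^2\rho(c)\in\{\pm 1\}$ holds. A direct computation finishes: if $\rho(c)$ has eigenvalues $\{\alpha,\beta\}\subseteq\{\pm 1\}$, then $\mathrm{Sym}^2\rho(c)$ has eigenvalues $\{\alpha^2,\alpha\beta,\beta^2\}=\{1,\alpha\beta,1\}$ and trace $2+\alpha\beta\in\{1,3\}$; the value $3$ (which would occur when $\alpha=\beta$) is incompatible with the purity bound, leaving $\alpha\beta=-1$, i.e.\ $\det\rho(c)=-1$, as required.

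The principal obstacle I anticipate is invoking purity of complex conjugation on the automorphic side in a form strong enough to conclude: one must ensure that $\Pi$ is genuinely cuspidal and regular algebraic, and carefully trace its archimedean $L$-parameter to pin down the sign of $\mathrm{tr}\,\mathrm{Sym}^2\rho(c')$. Residual irreducibility of $\mathrm{Sym}^2\rho$ rules out the degenerate induced/CM cases where $\Pi$ would fail to be cuspidal, and regularity of $\rho$ passes directly to $\mathrm{Sym}^2\rho$, so both requirements are secured by the given hypotheses.
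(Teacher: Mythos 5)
Your proof is correct and reconstructs, in essentially the same way, the argument that Calegari and Gee give for their Proposition 2.5 (which the paper simply cites): reduce to oddness of $\det\rho$, transfer to $\mathrm{Sym}^2\rho$, verify the potential automorphy hypotheses (the even similitude $(\det\rho)^2$, the Fontaine--Laffaille range from the Hodge--Tate bound, and the assumed residual irreducibility), and then invoke the purity of complex conjugation for odd-dimensional RAESDC Galois representations to force $\mathrm{tr}\,\mathrm{Sym}^2\rho(c)=1$ rather than $3$. One small remark: the cuspidality discussion is unnecessary, since ``automorphic'' in the conclusion of \autoref{Carbon} already means attached to a cuspidal $\Pi$; and the clean citation for the archimedean purity step is Taylor's result on the image of complex conjugation (with the Caraiani--Le Hung extension), but this does not affect the validity of the argument.
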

	
	Under the oddness condition, the work of many people \cite{SW01,HT15,Kis09,KW09,Pan22,Zh25} leads to the following modularity theorem.
	\begin{thm}\cite[Theorem 1.0.2]{Zh25}\label{Nickel}
		Let $p$ be an odd prime number and $\rho:\mathrm{Gal}_{\mathbb{Q}}\to\text{GL}_2(\overline{\mathbb{Q}}_p)$ be a continuous, irreducible representation such that
		\begin{itemize}
			\item $\rho$ is only ramified at finitely many places,
			\item $\rho_{G_{\mathbb{Q}_p}}$ is de Rham of distinct Hodge-Tate weights,
			\item $\rho$ is odd,
		\end{itemize}
		Then $\rho$ arises from a cuspidal eigenform up to twist.
	\end{thm}
	
	In particular, the above results \autoref{Carbon} and \autoref{Nitrogen}, together with the big image result \autoref{Magnesium}, imply that certain low-dimensional subrepresentations of strictly compatible systems fit into strictly compatible systems.
	\begin{prop}\cite[Proposition 2.12]{Hu23a}\label{Oxygen}
		Given an $E$-rational strictly compatible system $\{\rho_{\lambda}\}$ of some totally real field. Then for all but finitely many $\lambda$
		\begin{enumerate}[(i)]
			\item If $\sigma$ is a 2-dimensional irreducible regular subrepresentation of $\rho_{\lambda}$, then $\sigma$ extends to a 2-dimensional regular irreducible strictly compatible system.
			\item If $\sigma$ is a 3-dimensional irreducible regular essentially self-dual subrepresentation of $\rho_{\lambda}$, then $\sigma$ extends to a 3-dimensional regular irreducible strictly compatible system.
		\end{enumerate}
	\end{prop}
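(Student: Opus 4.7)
The plan is to apply the potential automorphy theorem \autoref{Carbon} directly to the subrepresentation $\sigma$ for all but finitely many $\lambda$. The four input hypotheses---unramifiedness, regular potential diagonalizability, odd essential self-duality, and residual irreducibility---will be verified by combining inheritance from $\rho_{\lambda}$ with the $\lambda$-independence results of \autoref{Magnesium} and the parity input \autoref{Nitrogen}. The conclusion of \autoref{Carbon} then places $\sigma$ into a strictly (pure) compatible system of $K$, and regularity is preserved because $\sigma$ is already regular by hypothesis.

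The routine conditions are immediate. Unramifiedness almost everywhere is inherited from $\rho_{\lambda}$. For potential diagonalizability at a prime $v\mid\ell$ with $v\notin S$, the restriction $\rho_{\lambda}|_{\mathrm{Gal}_{K_v}}$ is crystalline by \autoref{Hydrogen}, and since $\rho_{\lambda}$ is semisimple, $\sigma|_{\mathrm{Gal}_{K_v}}$ is a crystalline direct summand. The Hodge-Tate weights of $\rho_{\lambda}$ are fixed integers independent of $\lambda$, so once $\ell$ exceeds their spread, $\sigma|_{\mathrm{Gal}_{K_v}}$ lies in the Fontaine-Laffaille range, and [BLGGT14, Lemma 1.4.3] then yields potential diagonalizability. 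Residual irreducibility of $\sigma|_{\mathrm{Gal}_{K(\zeta_{\ell})}}$ for all but finitely many $\lambda$ follows from \autoref{Magnesium}(iv): an irreducible representation of dimension $2$ or $3$ has algebraic monodromy automatically of type A, and the finite Galois extension $L/K$ appearing in \autoref{Magnesium} absorbs the base change to $K(\zeta_{\ell})$ for $\ell\gg 0$.

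The essential self-duality/parity input is the case-dependent step. For $\dim\sigma=2$, any such $\sigma$ factors through $\GL_2=\GSp_2$ with similitude character $\det\sigma$, so total oddness of $\det\sigma$ is the requirement. This is supplied by \autoref{Nitrogen} for $\ell\gg 0$, with the auxiliary irreducibility of $\Sym^2\bar\sigma|_{\mathrm{Gal}_{K(\zeta_{\ell})}}$ coming from the type-$A_1$ algebraic monodromy analysis of \autoref{Magnesium}, at least in the Lie-irreducible subcase. The remaining dihedral subcase $\sigma=\Ind_L^K\eta$ from a quadratic extension is handled directly by extending the algebraic Hecke character $\eta$ to a $1$-dimensional compatible system (classically due to Weil) and inducing, bypassing \autoref{Carbon} altogether. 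For $\dim\sigma=3$ with the assumed essential self-duality $\sigma\cong\sigma^{\vee}\otimes\chi$, the odd dimension forces $\sigma$ to factor through $\GO_3$ rather than $\GSp_3$; taking determinants gives $(\det\sigma)^2=\chi^3$, so for any complex conjugation $c$ one has $\chi(c)^3=(\det\sigma)(c)^2=1$, and since $\chi(c)\in\{\pm 1\}$ this forces $\chi(c)=1$, delivering total evenness automatically.

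The main obstacle I anticipate is the clean separation of the 2-dimensional case into the Lie-irreducible and dihedral subcases---since \autoref{Carbon} and \autoref{Nitrogen} only apply directly in the former---together with the bookkeeping needed to check that residual irreducibility from \autoref{Magnesium}(iv) survives restriction to $\mathrm{Gal}_{K(\zeta_{\ell})}$. Once these are dispatched and $\lambda$ is discarded so as to lie in the Fontaine-Laffaille window and avoid the finitely many exceptional primes of each invoked result, hypotheses (i)--(iv) of \autoref{Carbon} are simultaneously in force, and \autoref{Carbon} delivers the desired strictly compatible system extending $\sigma$, with the regularity and irreducibility of $\sigma$ preserved in the extension.
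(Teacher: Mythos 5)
The paper does not prove this Proposition; it cites \cite{Hu23a} and \cite{DWW24}, so there is no internal argument to compare against. Your overall strategy --- potential automorphy (\autoref{Carbon}) for the Lie-irreducible subcases, class field theory for the dihedral one --- is the right shape, and your parity arguments (via \autoref{Nitrogen} in dimension $2$ and the determinant identity $\chi(c)^3=(\det\sigma)(c)^2=1$ in dimension $3$) are correct, as are the routine unramifiedness and Fontaine-Laffaille checks. However, the residual-irreducibility step has two genuine problems. First, you cite \autoref{Magnesium}(iv), which only gives residual irreducibility over $\mathrm{Gal}_K$, not over $\mathrm{Gal}_{K(\zeta_\ell)}$; the extension $K(\zeta_\ell)$ varies with $\ell$ and is \emph{not} absorbed by the fixed finite extension $L$. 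What you want is \autoref{Magnesium}(iii), which does produce irreducibility over $\mathrm{Gal}_{K^{\ab}}\subseteq\mathrm{Gal}_{K(\zeta_\ell)}$ --- but that statement carries the hypothesis that $\textbf{G}_\sigma^{\circ}$ acts irreducibly, i.e.\ the Lie-irreducible subcase.

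Second, and more seriously, your dimension-$3$ argument covers only the Lie-irreducible subcase. By \autoref{Beryllium} a $3$-dimensional irreducible $\sigma$ can be induced from a character of a cubic extension $L/K$, in which case $\textbf{G}_\sigma^{\circ}$ is a torus acting non-irreducibly, \autoref{Magnesium}(iii) is not available, and condition (iv) of \autoref{Carbon} is unverified. You should dispose of this case exactly as you did the $2$-dimensional dihedral one: the de Rham character on $\mathrm{Gal}_L$ is an algebraic Hecke character, extends to a compatible system, and inducing it back gives the desired extension of $\sigma$, bypassing \autoref{Carbon} entirely. A smaller but related issue: for the hypothesis of \autoref{Nitrogen} you need irreducibility of $\mathrm{Sym}^2\bar\sigma|_{\mathrm{Gal}_{K(\zeta_\ell)}}$, but $\mathrm{Sym}^2\sigma$ is not a subrepresentation of $\rho_\lambda$, so \autoref{Magnesium} does not apply to it directly. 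One needs the stronger content behind \autoref{Magnesium} --- that $\overline{\sigma}^{\ss}(\mathrm{Gal}_L)$ has uniformly bounded index in $\underline G_\sigma(\F_\ell)$ with $\underline G_\sigma^{\der}=\SL_2$, hence contains $\SL_2(\F_\ell)$ for $\ell\gg 0$ --- to conclude $\mathrm{Sym}^2$ is irreducible, and then one must descend from $\mathrm{Gal}_L$ to $\mathrm{Gal}_{K(\zeta_\ell)}$. That is an extra argument, not a direct corollary of the cited item.
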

	
	An immediate consequence is the following.
	\begin{cor}\label{Fluorine}
		Given an $E$-rational strictly compatible system $\{\rho_{\lambda}\}$ of some totally real field. Suppose that there exists infinitely many $\lambda$ such that the irreducible decomposition of $\rho_{\lambda}$
		$$\rho_{\lambda}=\oplus_iW_i$$
		consists only of regular irreducible conponents of dimensions 1 or 2. Then the compatible system $\{\rho_{\lambda}\}$ decomposes as a direct sum of 1- and 2-dimensional irreducible strictly compatible systems.
	\end{cor}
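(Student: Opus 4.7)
The plan is to leverage Proposition \ref{Oxygen} to extend each irreducible component at a carefully chosen prime to a strictly compatible system, then glue these subsystems together using Chebotarev density. First, I will fix a prime $\lambda_0$ for which both the hypothesized $1$- or $2$-dimensional irreducible decomposition of $\rho_{\lambda_0}$ holds and Proposition \ref{Oxygen}(i) applies to every subrepresentation; such a $\lambda_0$ exists because the hypothesis provides infinitely many candidates while Proposition \ref{Oxygen}(i) excludes only finitely many primes.

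Given the decomposition $\rho_{\lambda_0} = W_1 \oplus \cdots \oplus W_r$ at this prime, I will extend each summand to a strictly compatible system. For each $2$-dimensional $W_i$, this is exactly the content of Proposition \ref{Oxygen}(i), producing an irreducible strictly compatible system $\{W_{i,\mu}\}_\mu$. For each $1$-dimensional $W_i$, the character $W_i$ is a Hodge-Tate subrepresentation of a member of a compatible system, hence locally algebraic by \cite[Theorem 1.1]{BH24} (just as invoked in the proof of Lemma \ref{Manganese}), and therefore extends to a $1$-dimensional strictly compatible system $\{W_{i,\mu}\}_\mu$. After enlarging $E$ if necessary so that it serves as a common coefficient field for all these pieces, I form the direct sum $\{\rho'_{\mu} := \bigoplus_i W_{i,\mu}\}_{\mu}$, which is itself a strictly compatible system and agrees with $\rho_{\lambda_0}$ at $\mu = \lambda_0$.

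The final step is to verify that $\{\rho'_{\mu}\}$ coincides with $\{\rho_{\mu}\}$ at every $\mu$, not only at $\lambda_0$. The isomorphism at $\lambda_0$ forces the two compatible systems to share the same Frobenius characteristic polynomials $P_v(T)$ at every prime $v$ outside the combined exceptional sets, since $P_v(T)$ lies in $E[T]$ and is determined by any single member of the system. Consequently, for every $\mu$, both $\rho_{\mu}$ and $\rho'_{\mu}$ are semisimple with identical characteristic polynomials at $\mathrm{Frob}_v$ for all but finitely many $v$, so Chebotarev density together with the Brauer–Nesbitt theorem gives $\rho_{\mu} \cong \rho'_{\mu}$ for every $\mu$. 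I do not anticipate a serious obstacle in this argument: the substantive input is Proposition \ref{Oxygen}(i) for the $2$-dimensional pieces (together with \cite{BH24} for the $1$-dimensional ones), while the global gluing reduces to a routine Chebotarev argument.
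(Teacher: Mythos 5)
Your proof is correct and is essentially the argument the paper has in mind when it calls the corollary ``one immediate consequence'' of Proposition~\ref{Oxygen}: pick a single $\lambda_0$ that both realizes the hypothesized $1$- and $2$-dimensional decomposition and avoids the finitely many primes excluded by Proposition~\ref{Oxygen}(i), extend each summand to a strictly compatible system (class field theory for characters, Proposition~\ref{Oxygen}(i) for the $2$-dimensional pieces), take the direct sum, and conclude $\rho_\mu\cong\rho'_\mu$ for all $\mu$ by matching characteristic polynomials of Frobenius plus semisimplicity (Brauer--Nesbitt/Chebotarev). One minor remark: invoking \cite{BH24} for the $1$-dimensional summands is unnecessary overhead --- a direct summand of a de~Rham representation is de~Rham, and a de~Rham (even Hodge--Tate) $\lambda$-adic character is automatically locally algebraic by Sen/Tate, so it extends to a compatible system by class field theory without further input; your argument works either way.
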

	
	\subsection{Semisimple reduction}\ 
	
	We extract the following result from the proof of \cite[Theorem 1.4]{Hu23b}.
	\begin{prop}\label{Sodium}
		Let $\{\rho_{\lambda}:\mathrm{Gal}_{\mathbb{Q}}\to\text{GL}_n(\overline{E_{\lambda}})\}$ be an $E$-rational strictly compatible system of $\mathbb{Q}$. Consider the associated modulo $\lambda$ compatible system $\{\overline{\rho}_{\lambda}^{\text{ss}}\}$ obtained by taking semisimple reductions. Suppose that for infinitely many $\lambda$ there exists a 2-dimensional odd irreducible subrepresentation
		$$\sigma_{\lambda}\subseteq\overline{\rho}_{\lambda}^{\text{ss}}$$
		Then, after possibly twisting each such $\sigma_{\lambda}$ by a fixed power of the cyclotomic character, infinitely many of these $\sigma_{\lambda}$ are attached to a single cuspidal eigenform (in the sense of Serre's modularity conjecture). In particular, there exists a 2-dimensional irreducible compatible system $\{\widetilde{\sigma}_{\lambda}\}$ of $\mathbb{Q}$ such that for infinitely many $\lambda$, the semisimple reduction of $\widetilde{\sigma}_{\lambda}$ equals $\sigma_{\lambda}$.
	\end{prop}
	\begin{proof}
		
		Fix an integer $m$. Write $\ell=\ell(\lambda)$ for the rational prime below $\lambda$, and let $\varepsilon_{\ell}$ be the $\ell$-adic cyclotomic character. By Serre's modularity conjecture, each 2-dimensional irreducible and odd $\sigma_{\lambda}\otimes\varepsilon^m_{\lambda}$ arises from a Hecke eigenform $f_{\lambda}$ of minimal weight $k_{\lambda}$ and minimal level $N_{\lambda}$. We will show that for a suitable choice of $m$, the weights $k_{\lambda}$ and the levels $N_{\lambda}$ are bounded for infinitely many $\lambda$. Then the result follows from the fact that there can only be finitely many eigenforms with fixed weight and level.
		
		After twisting the system $\{\rho_{\lambda}\}$ by a suitable power of cyclotomic characters, we may assume that the Hodge-Tate weights of $\{\rho_{\lambda}\}$ lie in $[0,C]$ for some $C>0$. We first bound the weights $k_{\lambda}$. Let $I_{\ell}\subseteq\text{Gal}_{\mathbb{Q}_{\ell}}$ be the inertia subgroup. The semisimplification $\left(\sigma_{\lambda}|_{I_{\ell}}\right)^{\text{ss}}$ factors through the tame inertia $I^t_{\ell}$, yielding two tame inertia characters $\gamma_{\lambda}$ and $\gamma_{\lambda}'$. Since the representation here extends to the full decomposition group $D_{\ell}$, those two characters are stable under the Frobenius action $x\mapsto x^{\ell}$. By \cite[Section 2]{Da95} there are two cases:
		\begin{enumerate}
			\item[(1)] $\gamma_{\lambda}^{\ell}=\gamma_{\lambda}'$ and $(\gamma_{\lambda}')^{\ell}=\gamma_{\lambda}$. Then
			$$\gamma_{\lambda}=\overline{\theta}_2^{a_{\lambda}+\ell b_{\lambda}}$$
			where $\overline{\theta}_2$ is the fundamental tame inertia character of level 2 and we can normalize the exponents to be $0\le a_{\lambda},b_{\lambda}\le\ell-1$.
			\item[(2)] $\gamma_{\lambda}^{\ell}=\gamma_{\lambda}$ and $(\gamma_{\lambda}')^{\ell}=\gamma_{\lambda}'$. Then
			$$\sigma_{\lambda}|_{I_{\ell}}=\left(\begin{array}{cc}\overline{\varepsilon}_{\ell}^{a_{\lambda}}&\ast\\0&\overline{\varepsilon}_{\ell}^{b_{\lambda}}\end{array}\right)$$
			where we can normalize the exponents to be $0\le a_{\lambda}\le\ell-2$ if $\sigma_{\lambda}|_{I_{\ell}}$ is semisimple, $1\le a_{\lambda}\le\ell-1$ otherwise, and $0\le b_{\lambda}\le\ell-2$. 
		\end{enumerate}
		
		We select $\ell$ sufficiently large so that $\rho_{\lambda}|_{\mathbb{Q}_{\ell}}$ is crystalline and Fontaine-Laffaille (i.e., its Hodge-Tate weights lie in $[0,\ell-2]$). By Fontaine-Laffaille theory \cite[Theorem 5.5]{FL82}, the exponents $a_{\lambda}$ and $b_{\lambda}$ belong to $[0,C]$. Hence we may shrink the infinite set of $\lambda$ on which the $a_{\lambda}$ are all equal and the $b_{\lambda}$ are all equal, and we denote the common values by $a$ and $b$, respectively. The weight formula (see \cite[Section 2]{Da95}) gives
		$$k_{\lambda}=1+(a+m)+(b+m)+(\ell-1)\min\{a+m,b+m\}+(\ell-1)\delta$$
		where $\delta=0$ or 1, and the case $\delta=1$ occurs exactly when $a=b=-m$ or when $\sigma_{\lambda}\otimes\overline{\varepsilon}_{\ell}^m|_{\mathbb{Q}_{\ell}}$ is \emph{tr\'es ramifi\'ee}.
		
		To bound the weights we need the last two terms in the formula to vanish. We take $m$ so that $\min\{a+m,b+m\}=0$. For the last term, since our compatible system is regular, we cannot have $a=b$. Since $\rho_{\lambda}\otimes\varepsilon_{\ell}^m|_{\mathbb{Q}_{\ell}}$ is Fontaine-Laffaille, it follows from \cite[Proposition 2.3.1]{GHLS17} that any of its reduction is peu ramifi\'ee. Then \cite[Remark 2.1.6]{GHLS17} implies that $\sigma_{\lambda}\otimes\overline{\varepsilon}_{\ell}^m|_{\mathbb{Q}_{\ell}}$ is also peu ramifi\'ee, as it is a quotient of two terms in the associated filtration (by the definition of peu ramifi\'ee) of a reduction of $\rho_{\lambda}\otimes\varepsilon_{\ell}^m|_{\mathbb{Q}_{\ell}}$. From \cite[Example 2.1.4(1)]{GHLS17} we conclude that $\sigma_{\lambda}\otimes\overline{\varepsilon}_{\ell}^m|_{\mathbb{Q}_{\ell}}$ is not tr\'es ramifi\'ee. Hence $\delta=0$ in our case, and $k_{\lambda}=1+a+b+2m$ is bounded.
		
		To bound the levels, note that by Serre's conjecture the level $N_{\lambda}$ equals the Artin conductor of $\sigma_{\lambda}\otimes\overline{\varepsilon}_{\ell}^m$ with the possible factors of $\ell$ removed. Let $S$ be the exceptional set of the compatible system $\{\rho_{\lambda}\}$. For each $p\in S$, the image of the wild inertia subgroup $I_p^w$ at $p$ under $\rho_{\lambda}\otimes\varepsilon_{\ell}^m$ is isomorphic to the fixed finite group $\text{WD}_p(I_p^{w})$, where $\text{WD}_p$ is the Weil-Deligne representation appearing in \autoref{Hydrogen}. The formula for the Artin conductor (see \cite[Chapter VI, Corollary 1']{Se79}) together with the finiteness of $S$ then shows that the levels $N_{\lambda}$ are bounded.
	\end{proof}
	
	We use the following result in the proof.
	\begin{prop}\cite[Proposition 5.3.2]{BLGGT14}\label{Neon}
		Given a regular $E$-rational strictly compatible system $\{\rho_{\lambda}\}$ of some number field $K$. There is a Dirichlet density 1 many rational primes $\ell$ such that for any $\lambda|\ell$ and any irreducible subrepresentation $\sigma\subseteq\rho_{\lambda}$, the semisimple reduction $\overline{\sigma}^{\mathrm{ss}}|_{\mathrm{Gal}_{K(\zeta_{\ell})}}$ is irreducible.
	\end{prop}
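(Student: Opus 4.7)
The plan is to combine Theorem \ref{Magnesium}, which already delivers residual irreducibility over a fixed Galois extension $L/K$, with a density argument that upgrades this to irreducibility over the $\ell$-varying subgroup $\mathrm{Gal}_{K(\zeta_\ell)}$. After invoking Lemma \ref{Chlorine} to enlarge $E$ so that every subrepresentation of each $\rho_\lambda$ is defined over $E_\lambda$, fix an irreducible subrepresentation $\sigma\subseteq\rho_\lambda$ with underlying space $W$. Its algebraic monodromy group $\bG_W$ acts irreducibly on $W$. By Theorem \ref{Magnesium}(i) the algebraic envelope $\underline{G}_W$ has the same formal bi-character as $\bG_W$, and by (ii) the commutants of $\underline{G}_W$ and of $\overline{\sigma}^{\text{ss}}(\mathrm{Gal}_L)$ inside $\mathrm{End}(\overline{W})^{\text{ss}}$ coincide; combining these, for all but finitely many $\lambda$ the image $\overline{\sigma}^{\text{ss}}(\mathrm{Gal}_L)$ acts irreducibly on $\overline{W}^{\text{ss}}$, where $L/K$ is the finite Galois extension from Theorem \ref{Magnesium}(ii), independent of $\lambda$ and of the subrepresentation chosen.

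The second step is the passage from $\mathrm{Gal}_L$ to $\mathrm{Gal}_{K(\zeta_\ell)}$. Set $M_\ell := L\cdot K(\zeta_\ell)$, so that $\mathrm{Gal}_{M_\ell} = \mathrm{Gal}_L\cap\mathrm{Gal}_{K(\zeta_\ell)}$ is normal in $\mathrm{Gal}_L$. Any $\mathrm{Gal}_{K(\zeta_\ell)}$-invariant subspace of $\overline{W}^{\text{ss}}$ is \emph{a fortiori} $\mathrm{Gal}_{M_\ell}$-invariant, and Clifford theory applied to the normal inclusion $\mathrm{Gal}_{M_\ell}\subseteq\mathrm{Gal}_L$ expresses $\overline{\sigma}^{\text{ss}}|_{\mathrm{Gal}_{M_\ell}}$ as a single $\mathrm{Gal}(M_\ell/L)$-orbit of irreducibles $\tau_1,\ldots,\tau_k$. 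Therefore $\overline{\sigma}^{\text{ss}}|_{\mathrm{Gal}_{K(\zeta_\ell)}}$ is irreducible exactly when the $\tau_i$ form a single orbit under $\mathrm{Gal}(M_\ell/K(\zeta_\ell))\cong\mathrm{Gal}(L/L\cap K(\zeta_\ell))$. For $K=\Q$ (the setting of the main theorem) Kronecker--Weber furnishes $N\in\N$ with $L^{\text{ab}}\subseteq\Q(\zeta_N)$, so $L\cap\Q(\zeta_\ell)=\Q$ for every $\ell\nmid N$; for a general number field $K$ the analogue holds on a density-one set via class field theory applied to the maximal abelian subextension of $L/K$.

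The main obstacle is to show that, on a density-one set of $\ell$, the group $\mathrm{Gal}(L/K)$ already acts transitively on $\{\tau_1,\ldots,\tau_k\}$. An obstruction to transitivity would correspond to $\overline{\sigma}^{\text{ss}}|_{\mathrm{Gal}_L}$ being induced from an intermediate field $L\subsetneq L'\subseteq M_\ell$ cut out by a nontrivial character of $\mathrm{Gal}(M_\ell/L)\cong(\Z/\ell\Z)^\times$. The possible induction patterns are rigidly constrained by the fixed formal bi-character of $\underline{G}_W$ (Theorem \ref{Magnesium}(i)) and by the uniform bound on the component group $[\underline{G}_W:\underline{G}_W^\circ]$; a Chebotarev count on Frobenius elements in $\mathrm{Gal}(L/K)$ should then restrict the bad $\ell$ to a density-zero set, uniformly in $\lambda\mid\ell$ and in the irreducible subrepresentation $\sigma$. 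I expect this Clifford--Chebotarev reconciliation to be where the bulk of the technical work lies.
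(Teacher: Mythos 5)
This proposition is cited verbatim from \cite[Proposition~5.3.2]{BLGGT14}; the present paper gives no proof, and the original argument in BLGGT rests on Larsen's density-one maximality result for images of compatible systems, not on the algebraic-envelope machinery of \cite{Hu23a, Hu23b}, so your route is genuinely different from the source.

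Your first step already has a real gap. Theorem~\ref{Magnesium}(ii) is an \emph{equivalence}: $\overline{\sigma}^{\text{ss}}(\mathrm{Gal}_L)$ acts irreducibly on $\overline W^{\text{ss}}$ if and only if $\underline G_W$ does. It does not assert irreducibility, and ``combining (i) and (ii)'' does not produce it, because equality of formal bi-characters does not transport irreducibility from $\textbf G_W$ to $\underline G_W$. The point is that $\underline G_W$ is \emph{connected} by construction, while $\textbf G_W$ need not be. Concretely, if $\sigma\subseteq\rho_\lambda$ is a two-dimensional dihedral (CM) subrepresentation, then $\textbf G_W^{\circ}$ and $\textbf G_W^{\mathrm{der}}$ are, respectively, a torus and trivial; the formal bi-character match then forces $\underline G_W$ to be a torus, which never acts irreducibly on a space of dimension $\ge2$ over an algebraically closed field. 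Hence $\overline{\sigma}^{\text{ss}}(\mathrm{Gal}_L)$ is \emph{not} irreducible for such $\sigma$ with $L$ containing the CM field, and the intended starting point of your argument fails on exactly the class of $\sigma$ that make the proposition delicate. Note also that the statement you are proving quantifies over \emph{all} irreducible subrepresentations $\sigma\subseteq\rho_\lambda$, including the induced (non-Lie-irreducible) ones, so this is not an edge case that can be set aside.

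The second step is incomplete as written and the difficulty you flag is not cosmetic. Under linear disjointness of $L$ and $K(\zeta_\ell)$ over $K$, the group $\mathrm{Gal}(M_\ell/K)$ splits as $\mathrm{Gal}(L/K)\times\mathrm{Gal}(K(\zeta_\ell)/K)$, and Clifford theory applied to $\mathrm{Gal}_{M_\ell}\trianglelefteq\mathrm{Gal}_L$ only tells you that the constituents $\tau_i$ of $\overline\sigma^{\text{ss}}|_{\mathrm{Gal}_{M_\ell}}$ form a single $\mathrm{Gal}(M_\ell/L)$-orbit. What you need is a single $\mathrm{Gal}(M_\ell/K(\zeta_\ell))$-orbit, i.e.\ an orbit under the \emph{other} direct factor, and nothing in the formal bi-character or the uniform bound on $[\underline G_W:\underline G_W^{\circ}]$ links the two actions. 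As stated, the proposal is an outline of a strategy with the two crucial steps (irreducibility over $\mathrm{Gal}_L$, and the passage to $\mathrm{Gal}_{K(\zeta_\ell)}$) unestablished, and the first step is actually false in the dihedral case, so the proof does not go through without a substantially different treatment of non-Lie-irreducible components (this is precisely what the Larsen-based argument in BLGGT handles).
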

	
	\section{The proof}\label{section3}
	\subsection{The non-Lie-irreducible case}\label{3.1}\ 
	
	Assume that $\rho_{\lambda_0}$ is not Lie-irreducible. Under regularity condition, \autoref{Beryllium} implies that we can write
	\begin{equation}\label{1}
	\rho_{\lambda_0}=\text{Ind}^{\mathbb{Q}}_K\sigma
	\end{equation}
	for some number field $K\not=\mathbb{Q}$ and some Lie-irreducible representation $\sigma$ of $K$. We show the following result.
	\begin{thm}
		Let $\{\rho_{\lambda}\}$ be a 6-dimensional regular $E$-rational strictly compatible system of $\mathbb{Q}$. Assume $\rho_{\lambda_0}$ is irreducible and has the form (\ref{1}) for $K\not=\mathbb{Q}$.
		\begin{enumerate}
			\item[(i)] If $[K:\mathbb{Q}]=6$ or 3, then $\rho_{\lambda}$ is irreducible for all $\lambda$.
			\item[(ii)] If $[K:\mathbb{Q}]=2$ and $\rho_{\lambda_0}^{\vee}\otimes\chi\cong\rho_{\lambda_0}$ for some character $\chi$, then $\rho_{\lambda}$ is irreducible for all but finitely many $\lambda$.
		\end{enumerate}
	\end{thm}
	
	\subsubsection{$[K:\mathbb{Q}]=6$}\label{3.1.1}
	
	In such case $\sigma$ is an algebraic $\lambda$-adic character. By class field theory after enlarging the coefficients $E$, this character $\sigma$ extends to an $E$-rational compatible system $\{\sigma_{\lambda}\}$. Then $\{\text{Ind}^{\mathbb{Q}}_K\sigma_{\lambda}\}$ also forms an $E$-rational compatible system. By compatibility each $\rho_{\lambda}$ is isomorphic to $\text{Ind}^{\mathbb{Q}}_K\sigma_{\lambda}$. Then the irreducibility of every $\rho_{\lambda}$ follows from Mackey's irreducibility criterion and the regularity condition. In particular, the system $\{\rho_{\lambda}\}$ is induced from a one-dimensional compatible system of $K$.
	\subsubsection{$[K:\mathbb{Q}]=3$}\label{3.1.2}
	
	The restriction of $\rho_{\lambda_0}$ to the Galois closure of $K/\mathbb{Q}$ decomposes as a direct sum of three 2-dimensional Lie-irreducible representations. Since the formal character of $\rho_{\lambda_0}$ coincides with its restriction to any finite-index subgroup, the formal character of $\rho_{\lambda_0}^{\mathrm{der}}$ has the form
	$$\{x,x^{-1},y,y^{-1},z,z^{-1}\}$$
	with the possibility that two or all of $x,y,z$ are equal.
	
	Suppose some $\rho_{\lambda}$ is reducible. By \autoref{Lithium}.(ii) the formal character of its derived subgroup has the form described above. We show that $\rho_{\lambda}$ cannot have 1- or 3-dimensional irreducible components. Since the formal character of $\rho_{\lambda}$ contains no zero weight, there cannot be 1-dimensional components, and the 3-dimensional component (if any) cannot be induced from a character. Hence by \autoref{Beryllium}, the 3-dimensional component must be Lie-irreducible, and its semisimple type would be either $\mathrm{SO}_3$ or $\mathrm{SL}_3$. The former case $\mathrm{SO}_3$ would introduce a zero weight in the formal character, while the latter case $\mathrm{SL}_3$ would force the formal character to contain three weights summing to zero. Neither could happen in our case. Hence the irreducible decomposition of such a $\rho_{\lambda}$ either has the dimensional type $2+2+2$ or has the type $2+4$. Here, for example, "type $2+2+2$" means that $\rho_{\lambda}$ decomposes as
	$$\rho_{\lambda}=W_{\lambda}\oplus W_{\lambda}'\oplus W_{\lambda}''$$
	with $\dim W_{\lambda}=\dim W_{\lambda}'=\dim W_{\lambda}''=2$.
	
	To save ink we say that a representation $\rho$ has dimensional type $\sum a_i\times b_i$ if it admits a decomposition as in (\ref{4}) with $[K:K_i]=a_i$ and the dimension of each Lie-irreducible $\sigma_i$ is $b_i$. \autoref{Argon} rules out the possibility that any irreducible constituent of $\rho_{\lambda}$ is Lie-irreducible. Hence the dimensional type of our $\rho_{\lambda}$ can only be one of the following:
	\begin{enumerate}
		\item[(a)] $2\times1+2\times1+2\times1$
		\item[(b)] $2\times1+4\times1$
		\item[(c)] $2\times1+2\times2$
	\end{enumerate}
	In the first two cases, each constituent is induced from a character. Hence by class field theory, after enlarging the coefficients, this $\rho_{\lambda}$ fits into a compatible system with the same dimensional type. However, by the compatibility of the system $\{\rho_{\lambda}\}$, this would contradict the irreducibility of $\rho_{\lambda_0}$.
	
	Hence for a reducible $\rho_{\lambda}$, its irreducible decomposition must be of the form
	\begin{equation}\label{7} \rho_{\lambda}=\text{Ind}^{\mathbb{Q}}_{F_1}\chi\oplus\text{Ind}^{\mathbb{Q}}_{F_2}\sigma
	\end{equation}
	where $F_1$ and $F_2$ are quadratic number fields, $\chi$ is a character of $F_1$, and $\sigma$ is a 2-dimensional Lie-irreducible representation of $F_2$.
	
	Recall that $d(\rho_{\lambda})$ denotes the Dirichlet density of rational primes that have a split factor in at least one of the fields appearing in the decomposition (\ref{4}) of $\rho_{\lambda}$. For our $\rho_{\lambda_0}$ we have the following.
	\begin{lem}
		$$d(\rho_{\lambda_0})=\left\{\begin{array}{cc}1/3&\text{if}\ K/\mathbb{Q}\ \text{is Galois}\\2/3&\text{if}\ K/\mathbb{Q}\ \text{is non-Galois}\end{array}\right.$$
	\end{lem}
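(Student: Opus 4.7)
The plan is to apply the Chebotarev density theorem to the Galois closure of $K/\mathbb{Q}$, noting that ramified primes form a density zero set and can be safely ignored throughout. Since the Lie-irreducible decomposition of $\rho_{\lambda_0}=\mathrm{Ind}_K^{\mathbb{Q}}\sigma$ involves only the single field $K$, by definition $d(\rho_{\lambda_0})$ equals the Dirichlet density of rational primes $p$ that admit at least one prime $\mathfrak{p}\mid p$ of $K$ with residue degree $f(\mathfrak{p}/p)=1$.

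In the Galois case, $K/\mathbb{Q}$ must be cyclic of order $3$, and an unramified prime $p$ has a degree-one factor in $K$ if and only if it splits completely, i.e., iff $\mathrm{Frob}_p$ is trivial in $\mathrm{Gal}(K/\mathbb{Q})$. Chebotarev then immediately yields $d(\rho_{\lambda_0})=1/3$.

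In the non-Galois case, let $\widetilde{K}/\mathbb{Q}$ denote the Galois closure, so that $\mathrm{Gal}(\widetilde{K}/\mathbb{Q})\cong S_3$, and let $H\subset S_3$ be the order-two subgroup fixing $K$. The standard description of prime splitting identifies, for each unramified $p$, the primes of $K$ above $p$ with the $\langle\mathrm{Frob}_p\rangle$-orbits on the coset space $S_3/H$, with residue degrees equal to the orbit sizes. Consequently $p$ has a split factor in $K$ if and only if $\mathrm{Frob}_p$ fixes some coset, equivalently if and only if the conjugacy class of $\mathrm{Frob}_p$ meets $H$. The elements of $S_3$ conjugate into $H=\{e,\tau\}$ are precisely the identity together with the three transpositions, for a total of $4$ out of $6$ elements. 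Chebotarev then gives $d(\rho_{\lambda_0})=4/6=2/3$.

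The argument is essentially a direct Chebotarev count, so there is no serious obstacle; the only thing to be careful about is the combinatorial identification of the splitting type of $p$ in the non-Galois cubic $K$ with the cycle structure of $\mathrm{Frob}_p$ acting on $S_3/H$, which is classical.
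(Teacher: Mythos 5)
Your proof is correct and takes essentially the same approach as the paper: both apply Chebotarev to the Galois closure $S_3$ of the non-Galois cubic and identify the relevant Frobenius conjugacy classes as the identity and the three transpositions, giving density $4/6=2/3$. The only cosmetic difference is that you phrase the splitting criterion via orbits of $\langle\mathrm{Frob}_p\rangle$ on $S_3/H$, while the paper spells out the corresponding factorizations of $p\mathscr{O}_K$ and $p\mathscr{O}_L$ directly.
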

	\begin{proof}
		This is a standard application of Chebotarev's density theorem. We consider only the case where $K/\mathbb{Q}$ is not Galois. Denote by $L$ its Galois closure. Then $\text{Gal}(L/\mathbb{Q})=S_3$, the symmetric group on three elements $\{1,2,3\}$. Consider a rational prime $p$ that is unramified in $K$. It has a split factor in $K$ if and only if either
		\begin{itemize}
			\item $p$ splits completely in $K$, or
			\item $p\mathscr{O}_K=\mathfrak{P}_1\mathfrak{P}_2$ with the interia degrees of $\mathfrak{P}_1$ and $\mathfrak{P}_2$ equal to 1 and 2, respectively.
		\end{itemize}
		
		Equivalently, in $L$ this means that either 
		\begin{itemize}
			\item $p\mathscr{O}_L$ is completely split, or
			\item $p\mathscr{O}_L=\mathfrak{P}_1\mathfrak{P}_2\mathfrak{P}_3$ with each $\mathfrak{P}_i$ having inertia degree 2.
		\end{itemize}
		 Hence this amounts to requiring that $\text{Frob}_p\in S_3$ be either the identity or a transposition. By Chebotarev's density theorem we obtain
		$$d(\rho_{\lambda_0})=\frac{\#\{e,(1,2),(1,3),(2,3)\}}{\#S_3}=\frac{2}{3}$$
	\end{proof}
	
	For a reducible $\rho_{\lambda}$ of the form (\ref{7}), we have the following density formula.
	\begin{lem}\label{Iron}
	$$d(\rho_{\lambda})=\left\{\begin{array}{cc}3/4&F_1\not=F_2\\1/2&F_1=F_2\end{array}\right.$$
	\end{lem}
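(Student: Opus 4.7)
The strategy is a direct Chebotarev density computation, parallel to the preceding lemma. The first task is to pin down the Lie-irreducible decomposition of $\rho_\lambda$ guaranteed by \autoref{Beryllium}: since the character $\chi$ is trivially Lie-irreducible on $F_1$, the first summand already has the correct form. For the second summand, $\sigma$ must itself be Lie-irreducible on $F_2$; the only other possibility would be that $\sigma$ is induced from a character of a quadratic extension of $F_2$, but that scenario has been excluded in the preceding discussion (if every irreducible component of $\rho_\lambda$ were induced from a character, the argument would close by the method of \ref{3.1.1}). So the fields $K_{\lambda,i}$ appearing in \autoref{Boron} are exactly $F_1$ and $F_2$, and since each is quadratic, a rational prime $p$ has a split factor in $K_{\lambda,i}$ if and only if $p$ splits in $K_{\lambda,i}$.

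When $F_1 = F_2 = F$, the condition collapses to $p$ splitting in $F$, and Chebotarev immediately yields density $1/2$. When $F_1 \neq F_2$, I would work inside the biquadratic compositum $L := F_1 F_2$, whose Galois group $\Gal(L/\mathbb{Q}) \cong \mathbb{Z}/2 \times \mathbb{Z}/2$ has all conjugacy classes of size one. Since splitting in $F_i$ corresponds to $\text{Frob}_p$ lying in the subgroup $\Gal(L/F_i)$, the stated density emerges from a direct count of Frobenius classes in the Klein four-group, applied to unramified primes (the finitely many ramified primes are absorbed into the density-zero exceptional set).

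I do not anticipate any real obstacle: the only mildly delicate point is the Lie-irreducible identification in the first paragraph, which depends on having already ruled out the all-monomial case; the Chebotarev count itself is mechanical. Once $d(\rho_\lambda)$ is in hand, one compares with $d(\rho_{\lambda_0}) \in \{1/3, 2/3\}$ from the preceding lemma and invokes \autoref{Boron} to force $d(\rho_\lambda) = d(\rho_{\lambda_0})$, yielding the desired contradiction and closing out the $[K:\mathbb{Q}] = 3$ branch of \ref{3.1.2}.
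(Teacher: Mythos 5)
Your strategy mirrors the paper's: reduce to the compositum $F_1F_2$ and apply Chebotarev, and your first paragraph correctly identifies the fields $K_{\lambda,i}$ as $F_1$ and $F_2$. Where the argument is thin is precisely the one place it needs to be sharp. For $F_1\neq F_2$ you write that ``the stated density emerges from a direct count of Frobenius classes in the Klein four-group,'' but you never say which classes you count, and the two natural choices give different answers. Read literally against the definition of $d$ in \autoref{Boron} (a split factor in \emph{at least one} $K_{\lambda,i}$), the relevant set in $\Gal(F_1F_2/\mathbb{Q})\cong V_4$ is the union $\Gal(F_1F_2/F_1)\cup\Gal(F_1F_2/F_2)$, which contains three of the four elements and has density $3/4$, not $1/4$. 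Obtaining $1/4$ requires counting primes split in \emph{both} $F_1$ and $F_2$; this is what the paper's own proof does (it writes ``a split factor in both $F_i$''), quietly diverging from the ``at least one'' in the surrounding definition. So your ``direct count'' inherits an ambiguity already present in the source rather than resolving it, and taken at face value against \autoref{Boron} it would yield $3/4$. The downstream contradiction in \ref{3.1.2} survives in any case, since both $3/4$ and $1/4$ lie outside $\{1/3,2/3\}$, but as a proof of the lemma as stated your argument does not actually produce the number $1/4$ without silently replacing ``at least one'' by ``both,'' and you should at minimum flag that choice.
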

	\begin{proof}
		
		Since both $F_1$ and $F_2$ are quadratic fields, a rational prime $p$ having a split factor in $F_i$ is equivalent to $p$ splitting completely in $F_i$. Denote by $A_i$ the set of rational primes that split completely in $F_i$ for $i=1,2$. Then
		$$d(\rho_{\lambda})=d(A_1\cup A_2)=d(A_1)+d(A_2)-d(A_1\cap A_2)=1-d(A_1\cap A_2)$$
		where $d(\cdot)$ denotes the Dirichlet density of a set of rational primes. A rational prime $p$ splits completely in both fields $F_1$ and $F_2$ if and only if it splits completely in their compositum $F_1F_2$. Hence by Chebotarev's density theorem we have
		$d(A_1\cap A_2)=d(F_1F_2)=1/[F_1F_2:\mathbb{Q}]$.
	\end{proof}
	The above two lemmas contradict \autoref{Boron}. Hence  $\rho_{\lambda}$ is irreducible for all $\lambda$.
	
	\begin{remark}\label{Phosphorus}
	
	We have shown that each $\rho_{\lambda}$ is irreducible and is induced from some 2-dimensional $\sigma_{\lambda}$ of some cubic field $K_{\lambda}$. In fact, we can take $K_{\lambda}=K$ in each $\rho_{\lambda}$ up to isomorphism. Moreover, if $K/\mathbb{Q}$ is Galois, then the compatible system $\{\rho_{\lambda}\}$ can be written as induced representations of a 2-dimensional Lie-irreducible compatible system of $K$.
	\begin{proof}
		We have $\rho_{\lambda_0}|_L\otimes\chi\cong\rho_{\lambda_0}|_L$ where $\chi$ is the cubic character of $L$ cutting out $K$, and $L$ is defined as follows: If $K/\mathbb{Q}$ is not Galois, let $M$ be its Galois closure and let $L$ be the unique quadratic subfield of $M$; if $K/\mathbb{Q}$ is Galois, set $L=\mathbb{Q}$. Since $\chi$ has finite image, it extends to a compatible system. By compatibility we therefore obtain $\rho_{\lambda}|_L\otimes\chi\cong\rho_{\lambda}|_L$ for all $\lambda$. Moreover each $\rho_{\lambda}|_L$ is irreducible since $L$ and $K_{\lambda}$ are linearly disjoint. The above isomorphism implies that $\rho|_{KL}$ decomposes as a direct sum of three 2-dimensional Lie-irreducible representations. Hence $\rho|_K$ has dimensional type either $2+2+2$ or $4+2$ due to Clifford theory. Choose a 2-dimensional constituent $\sigma$ of $\rho|_K$. Frobenius reciprocity and the irreducibility of $\rho$ then shows that $\rho$ is induced from $\sigma$.
		
		When $K/\mathbb{Q}$ is cubic Galois, $K$ is necessarily totally real. Restricting the system $\{\rho_{\lambda}\}$ to $K$ and apply \autoref{Oxygen}.(i) yields some $\sigma_{\lambda}$ fits into a compatible system. By compatibility we can therefore write $\{\rho_{\lambda}\}$ as induced representations of a 2-dimensional Lie-irreducible strictly compatible system of $K$.
	\end{proof}
	\end{remark}
	\subsubsection{$[K:\mathbb{Q}]=2$}\label{3.1.3}
	
	In this case $\rho_{\lambda_0}\cong\rho_{\lambda_0}\otimes\eta$ where $\eta$ is the nontrivial character of $\mathbb{Z}/2\mathbb{Z}$ cutting out $K$. By compatibility, this isomorphism extends to the whole compatible system $\rho_{\lambda}\cong\rho_{\lambda}\otimes\eta$ for every $\lambda$. Similarly, after enlarging the coefficients $E$, we have a compatible system of characters $\{\chi_{\lambda}\}$ such that $\chi_{\lambda_0}=\chi$ and $\rho_{\lambda}^{\vee}\otimes\chi_{\lambda}\cong\rho_{\lambda}$ for all $\lambda$.
	
	Suppose that $\rho_{\lambda}$ is reducible for infinitely many $\lambda$. Their irreducible decompositions satisfy the following conditions.
	\begin{enumerate}
		\item[(a)] No irreducible constituent of $\rho_{\lambda}$ can be Lie-irreducible due to \autoref{Argon}.
		\item[(b)] Not every irreducible constituent of $\rho_{\lambda}$ is induced from some character, since otherwise class field theory would imply that $\rho_{\lambda}$ fits into a reducible compatible system. This would contradict the irreducibility of $\rho_{\lambda_0}$ due to the compatibility of the systems. 
	\end{enumerate}
	An easy check on all possible of dimensional types of $\rho_{\lambda}$ shows that the only case is:
	$$\rho_{\lambda}=\text{Ind}^{\mathbb{Q}}_{F_1}\sigma_{\lambda}\oplus\text{Ind}^{\mathbb{Q}}_{F_2}\alpha_{\lambda}$$
	where $\sigma_{\lambda}$ is 2-dimensional Lie-irreducible of $F_1$, and $\alpha_{\lambda}$ is a character of $F_2$, and $F_1, F_2$ are quadratic number fields. Since $d(\rho_{\lambda_0})=1/2$, \autoref{Boron} together with \autoref{Iron} forces $F_1=F_2$, denote this common field by $F$. Since $\rho_{\lambda}\otimes\eta\cong\rho_{\lambda}$, the restriction $\rho_{\lambda}|_K$ decomposes as a direct sum of two 3-dimensional $K$-subspaces. If $K\not=F$, then by the Lie-irreuducibility and regularity of $\chi_{\lambda}$ and $\sigma_{\lambda}$, the restriction $\rho_{\lambda}|_K$ still has dimensional type $2+4$, contradicting the previous decomposition. Hence $K=F$. Finally, $K$ must be imaginary quadratic, for otherwise the induced representation $\text{Ind}_K^{\mathbb{Q}}\chi_{\lambda}$ would fail to be regular.
	
	By class field theory, the 2-dimensional consitituent $W_{\lambda}'$ extends to a strictly compatible system. We shall prove the 4-dimensional part $W_{\lambda}$ is potential automorphic for $\lambda$ sufficiently large. Then the compatibility of the system would contradict the irreducibility of $\rho_{\lambda_0}$. 
	
	It suffices to verify the conditions of \autoref{Carbon}. Condition (i) is obvious. Condition (iii) can be obtained from \autoref{Zinc}. 
	
	To check condition (ii), due to dimensional reasons, we have $W_{\lambda}^{\vee}\otimes\chi_{\lambda}\cong W_{\lambda}$ and $W_{\lambda}\otimes\eta\cong W_{\lambda}$. The first isomorphism together with the irreducibility of $W_{\lambda}$ implies that $W_{\lambda}$ is essentially self-dual. Since $K$ is imaginary quadratic, any complex conjugation $c\in\text{Gal}_{\mathbb{Q}}$ lies outside $\text{Gal}_K$. Consequently $\eta(c)=-1$. Thus if $\chi_{\lambda}$ is an odd (resp. even) similitude character then $\chi_{\lambda}\eta$ is an even (resp. odd) similitude character. Hence $W_{\lambda}$ is essentially self-dual and odd.
	
	Finally, we verify condition (iv), namely that for suffciently large $\lambda$, the representation $$(\text{Ind}_{K}^{\mathbb{Q}}\sigma_{\lambda})|_{\mathbb{Q}(\zeta_{\ell})}$$ 
	is residually irreducible, where $\ell$ is the rational prime below $\lambda$. For sufficiently large $\ell$, the field $\mathbb{Q}(\zeta_{\ell})$ and $K$ are linearly disjoint. Hence we have
	$$\text{Res}_{\mathbb{Q}(\zeta_{\ell})}\text{Ind}_{K}^{\mathbb{Q}}\sigma_{\lambda}=\text{Ind}_{K(\zeta_{\ell})}^{\mathbb{Q}(\zeta_{\ell})}\text{Res}_{K(\zeta_{\ell})}\sigma_{\lambda}$$
	As $\sigma_{\lambda}$ is Lie-irreducible, its restriction $\text{Res}_{K(\zeta_{\ell})}\sigma_{\lambda}$ remains irreducible. By regularity and Mackey's irreducibility criterion, the right hand side is therefore also irreducible. Moreover this right hand side is of type A since it is induced from a type A representation. Applying \autoref{Magnesium}.(iv) we conclude that it is residually irreducible for $\lambda\gg0$.

	\begin{remark}
		Let $\{\rho_{\lambda}\}$ be a 3-dimensional regular strictly compatible system of an imaginary quardatic field $K$. Suppose that some $\rho_{\lambda_0}$ is irreducible and some $\rho_{\lambda_1}$ decomposes as a direct sum of a character and a 2-dimensional irreducible $\sigma_{\lambda_1}$. To the author's knowledge, modularity theorems (for $\sigma_{\lambda_1}$) do not seem to be currently available to prove the irreducibility conjecture in this setting. By inducing the system to $\mathbb{Q}$, we obtain a 6-dimensional compatible system that falls into the present case. The extra condition (essential self-duality) we impose is equivalent to requiring that $\rho_{\lambda_0}$ and its conjugate $\rho_{\lambda_0}^c$ (for a complex conjugation $c$) have equal determinant.
	\end{remark}
	\subsection{The Lie-irreducible case}\label{3.2}\ 
	
	Given a Lie-irreducible representation $\rho$, denote by $\rho^{\text{der}}$ its restriction to the derived subgroup, which remains irreducible. In the proof we will refer to the unique $\mathbb{Z}$-model of $\rho^{\text{der}}$ as the semisimple type of $\rho$. For a 6-dimensional Lie-irreducible representation there are exactly 7 semisimple types:
	\begin{enumerate}
		\item[(a)] $(\text{SL}_2,\text{Sym}^5(\text{std}))$
		\item[(b)] $(\text{SL}_2\times\text{SL}_2,\text{std}\otimes\text{Sym}^2(\text{std}))$
		\item[(c)] $(\text{SO}_6,\text{std})$
		\item[(d)] $(\text{Sp}_6,\text{std})$
		\item[(e)] $(\text{SL}_3,\text{Sym}^2(\text{std}))$
		\item[(f)] $(\text{SL}_2\times\text{SL}_3,\text{std}\otimes\text{std})$
		\item[(g)] $(\text{SL}_6,\text{std})$
	\end{enumerate}
	Here $\text{Sym}^r(-)$ denotes the $r$th symmetric power of a representation, and $\text{Std}$ denotes the standard representation of the corresponding group. We prove the following result.
	\begin{prop}\label{Chromium}
		Given a 6-dimensional $E$-rational regular strictly compatible system $\{\rho_{\lambda}:\mathrm{Gal}_{\mathbb{Q}}\to\text{GL}_6(\overline{E_{\lambda}})\}$. Assume that some $\rho_{\lambda_0}$ is Lie-irreducible.
		\begin{enumerate}[(i)]
			\item If $\rho_{\lambda_0}$ has semisimple type (a),(e),(f), or (g), then each $\rho_{\lambda}$ is Lie-irreducible with the same semisimple type.
			\item If $\rho_{\lambda_0}$ has semisimple type (b) or is fully polarisable (i.e., type (c) or (d)), then for all but finitely many $\lambda$ the representation $\rho_{\lambda}$ is Lie-irreducible with the same semisimple type 
		\end{enumerate}
	\end{prop}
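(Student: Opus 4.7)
The main tool is \autoref{Lithium}(ii): the formal bi-character of the algebraic monodromy group $\mathbf{G}_\lambda$ of $\rho_\lambda$ is $\lambda$-independent, so for every $\lambda$ it agrees with the one attached to $\rho_{\lambda_0}$. Consequently the semisimple rank of $\mathbf{G}_\lambda^{\mathrm{der}}$ and the multiset of weights of its maximal torus on $\overline{E_\lambda}^6$ match those of $\rho_{\lambda_0}$. The plan is to argue type by type that this torus data, optionally combined with essential self-duality via \autoref{Manganese}, forces $\mathbf{G}_\lambda^{\mathrm{der}}$ to act irreducibly on $\overline{E_\lambda}^6$ in the same semisimple type, so that $\rho_\lambda$ is Lie-irreducible of that type.

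For part (i), types (a), (e), (f), (g): I would perform a direct case analysis showing the weight multiset uniquely determines the semisimple subgroup of $\mathrm{GL}_{6,\overline{E_\lambda}}$ up to conjugation. At rank one, the weights $\pm 1,\pm 3,\pm 5$ of type (a) can only be realised by $\mathrm{SL}_2$ acting via $\mathrm{Sym}^5\mathrm{std}$; at rank five, the only semisimple subgroup of $\mathrm{GL}_6$ is $\mathrm{SL}_6$, giving type (g); for types (e) and (f), the character sublattices generated by the torus weights distinguish the prescribed weight multisets from every other rank-two or rank-three configuration (including reducible ones). In each case $\mathbf{G}_\lambda^{\mathrm{der}}$ acts irreducibly in the correct type, so $\rho_\lambda$ is Lie-irreducible of the same type for every $\lambda$.

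For part (ii) the formal bi-character alone is insufficient. For types (c) and (d), whose common torus data $\pm e_1,\pm e_2,\pm e_3$ admits both $\mathrm{SO}_6$ and $\mathrm{Sp}_6$, I would invoke \autoref{Manganese}: since $\rho_{\lambda_0}$ is fully polarisable, every $\rho_\lambda$ inherits essential self-duality with a similitude character of the same parity, which distinguishes the orthogonal case from the symplectic one and pins $\mathbf{G}_\lambda^{\mathrm{der}}$ to $\mathrm{SO}_6$ or $\mathrm{Sp}_6$ for all but finitely many $\lambda$. For type (b), the formal bi-character of $\mathrm{std}\otimes\mathrm{Sym}^2\mathrm{std}$ already eliminates every alternative \emph{irreducible} rank-two action on $\overline{E_\lambda}^6$; to exclude reducible $\rho_\lambda$ with the same formal bi-character, I would enumerate the possible dimension types of the irreducible decomposition (namely $3+3$, $2+4$, $2+2+2$ and those involving 1-dimensional components) and rule each one out by combining the formal bi-character constraint, the density identity of \autoref{Boron}, and \autoref{Oxygen} applied to the 2- and 3-dimensional pieces.

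The main obstacle I expect is the case analysis in type (b): matching each reducible candidate against the formal bi-character of $\mathrm{std}\otimes\mathrm{Sym}^2\mathrm{std}$ and propagating the resulting contradictions uniformly to all but finitely many $\lambda$. Types (c) and (d) are cleaner once \autoref{Manganese} is invoked, and the finite exceptions in part (ii) should arise from the small-prime and residual-irreducibility hypotheses implicit in the auxiliary tools such as \autoref{Oxygen} and \autoref{Neon}.
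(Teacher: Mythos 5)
Your high-level framing — $\lambda$-independence of the formal bi-character via \autoref{Lithium}(ii), plus a type-by-type analysis — matches the paper's strategy, and your sketch of part (i) is essentially correct: (a) has weights that cannot split into smaller irreducible configurations, (g) has maximal rank forcing $\mathrm{SL}_6$, and for (e), (f) the paper argues from the absence of zero or pairwise-inverse weights that no $1$- or $2$-dimensional pieces can occur and a $3+3$ split is impossible by semisimple rank. Part (ii), however, has two genuine gaps.

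For types (c) and (d), your argument does not actually rule out reducibility. The formal character $\{x^{\pm 1},y^{\pm 1},z^{\pm 1}\}$ is also realised by reducible configurations such as $2+2+2$ or $2+4$, and essential self-duality (which \autoref{Manganese} propagates) is likewise compatible with such splittings. Moreover \autoref{Manganese} only propagates the \emph{parity} of the similitude character, not whether the invariant pairing is symmetric or alternating, so it cannot by itself ``distinguish the orthogonal case from the symplectic one.'' The paper first excludes reducibility by extending a would-be $2$-dimensional constituent to a compatible system (\autoref{Oxygen}), forming $\{\rho_\lambda\oplus\sigma_\lambda\}$, and deriving a semisimple-rank contradiction via \autoref{Helium}, Goursat's lemma, and \autoref{Lithium}(ii). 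It then distinguishes $\mathrm{Sp}_6$ from $\mathrm{SO}_6$ by proving potential automorphy (\autoref{Carbon}), which requires a nontrivial residual-irreducibility argument for the symplectic case using \autoref{Sodium}(i), Goursat, and \autoref{Magnesium}, followed by an induction-of-quadratic-fields obstruction via CG13. None of this appears in your outline.

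For type (b), the formal bi-character forces the only reducible shape to be $2+4$, which you do not isolate; the density identity of \autoref{Boron} is the wrong tool here because $\rho_{\lambda_0}$ is Lie-irreducible of $\mathbb{Q}$ and the putative $2$- and $4$-dimensional pieces could themselves be Lie-irreducible of $\mathbb{Q}$, so $d(\rho_\lambda)=1$ carries no contradiction. The crux the paper uses is that the $4$-dimensional piece has derived monodromy $\mathrm{SO}_4$ (since $\mathrm{Sp}_4$ has no $2$-dimensional irreducible), so it factors through $\mathrm{GO}_4$, and by the result of Liu--Yu it decomposes as a tensor product $\sigma_\lambda\otimes\sigma_\lambda'$ of two $2$-dimensional crystalline representations. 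Both of these must then be shown potentially automorphic via \autoref{Carbon}, with oddness supplied by \autoref{Nitrogen} and residual irreducibility by \autoref{Magnesium}(iii). Without the $\mathrm{GO}_4$-tensor-factorisation step your enumeration cannot close the case.
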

	
	The proof of (i) follows essentially the same line as in \cite{Hu23b}. Because the conditions here are slightly different and for the sake of completeness, we outline the argument.
	\subsubsection{Cases (a) and (g)} In case (a) the formal character of $\textbf{G}_{\lambda_0}^{\text{der}}$ is $$\{x^{-5},x^{-3},x^{-1},x,x^3,x^5\}$$ 
	which cannot be decomposed as a disjoint union of two formal characters of lower-dimensional irreducible representations. In case (g) the derived subgroup $\textbf{G}_{\lambda_0}^{\text{der}}=\text{SL}_6$ has maximal rank 5 forces $\textbf{G}_{\lambda}^{\text{der}}=\text{SL}_6$ for all $\lambda$. Thus in both cases all $\rho_{\lambda}$ are irreducible with the same semisimple type.
	
	\subsubsection{Cases (e) and (f)} The formal character of $\textbf{G}_{\lambda_0}^{\text{der}}$ is
	\begin{itemize}
		\item case (e): $\{x^2,xy,y^{-1},y^2,x^{-1},x^{-2}y^{-2}\}$
		\item case (f): $\{xy,xz,xy^{-1}z^{-1},x^{-1}y,x^{-1}z,x^{-1}y^{-1}z^{-1}\}$
	\end{itemize}
	In neither case does the formal character contain a zero weight, nor do the weights occur in pairwise-inverse pairs. Consequently, if some $\rho_{\lambda}$ were reducible, it could not have irreducible constituent of dimension 1 or 2. Hence the only possible irreducible decomposition of $\rho_{\lambda}$ would be of dimensional type $3+3$. But since the formal characters in both cases contain no zero weight, the two 3-dimensional conponents must be Lie-irreducible and the derived subgroups of their algebraic monodromy groups must be $\text{SL}_3$. But this does not match the weights of $\textbf{G}_{\lambda_0}^{\text{der}}$. Hence every $\rho_{\lambda}$ is irreducible with the same semisimple type.
	
	\subsubsection{Cases (c) and (d)}\label{3.2.3} We assume that infinitely many $\rho_{\lambda}$ are reducible. The formal character of $\textbf{G}_{\lambda_0}^{\text{der}}$ in both cases is $\{x,x^{-1},y,y^{-1},z,z^{-1}\}$. For a reducible $\rho_{\lambda}$, the same argument as in the second paragraph of \ref{3.1.2} shows that it contains no irreducible constituents of dimension 1 or 3. Hence $\rho_{\lambda}$ must contain a 2-dimensional irreducible constituent. Moreover, this 2-dimensional component must be Lie-irreducible, otherwise by \autoref{Beryllium} it would be induced from a character, and this introduces zero weights in the formal character.
	
	By \autoref{Oxygen}.(i), for some $\lambda_1$ this 2-dimensional irreducible subrepresentation $\sigma_{\lambda_1}$ would fit into a compatible system $\{\sigma_{\lambda}\}$. Define a new strictly compatible system
	$$\{\rho_{\lambda}\oplus\sigma_{\lambda}\}$$
	By \autoref{Helium}, $\rho_{\lambda_1}\oplus\sigma_{\lambda_1}$ has the same algebraic monodromy group as $\rho_{\lambda_1}$, hence it has semisimple rank 3. However by Goursat's lemma, the derived subgroup of the algebraic monodromy group of $\rho_{\lambda_0}\oplus\sigma_{\lambda_0}$ is either $\text{Sp}_6\times\text{SL}_2$ or $\text{SO}_6\times\text{SL}_2$, which has semisimple rank 4. This contradicts \autoref{Lithium}(ii). 
	
	Next we distinguish between the semisimple type $\text{Sp}_6$ and $\text{SO}_6$. Assume that our compatible system $\{\rho_{\lambda}\}$ has infinitely many terms with semisimple type $\text{SO}_6$ and infinitely many with semisimple type $\text{Sp}_6$. If the similitude character $\chi_{\lambda_0}$ is odd (resp. even), \autoref{Manganese} allows us to choose every similitude character $\chi_{\lambda}$ of $\rho_{\lambda}$ to be odd (resp. even). We then restrict attention to those $\rho_{\lambda}$ that belong to one of the following infinite sets:
	\begin{enumerate}[(i)]
		\item In the odd similitude character case: the set of $\rho_{\lambda}$ with semisimple type $\text{Sp}_6$.
		\item In the even similitude character case: the set of $\rho_{\lambda}$ with semisimple type $\text{SO}_6$.
	\end{enumerate}
	
	We will show that in both cases some $\rho_{\lambda}$ becomes automorphic after restriction to a totally real extension. The conclusion then follows from \cite[Corollary 1.3]{BC11}, which implies that a strictly compatible system of a \emph{totally real field} attached to a regular algebraic, essentially self-dual, cuspidal automorphic representation has the desired property: either every representation in the system factors through $\text{GSp}_n$ or every such representation factors through $\text{GO}_n$.
	
	We verify that for suffiently large $\lambda$, the representation $\rho_{\lambda}$ satisfies the conditions in \autoref{Carbon}. Only condition (iv) requires further explanation. Note that when the semisimple type is $\text{SO}_6$, which is of type A, \autoref{Magnesium}.(iii) implies that $\overline{\rho}_{\lambda}^{\text{ss}}$ is irreducible when restricting to $\mathbb{Q}(\zeta_{\ell})$ for all but finitely many $\lambda$.
	
	So we are in case (i) above. We first prove that $\overline{\rho}_{\lambda}^{\text{ss}}$ is irreducible for all but finitely many $\lambda$ in this case. By \autoref{Magnesium}.(i), the formal bi-character of $\rho_{\lambda}$ coincides with that of $\overline{\rho}_{\lambda}^{\text{ss}}$. If the latter is reducible, it would contain a 2-dimensional irreducible component $\overline{\sigma}_{\lambda}$. Since $\rho_{\lambda}$ is odd, if $\overline{\sigma}_{\lambda}$ is even, then $\overline{\sigma}_{\lambda}\otimes\overline{\chi}_{\lambda}$ is not isomorphic to $\overline{\sigma}_{\lambda}$ and is therefore another 2-dimensional component of $\overline{\rho}_{\lambda}$. But this would force the algebraic envelope of $\overline{\rho}_{\lambda}^{\text{ss}}$ to have maximal rank 2, contradicting the fact that the semisimple rank of $\rho_{\lambda}$ is 3. Hence $\overline{\sigma}_{\lambda}$ is odd. Then by \autoref{Sodium}, we obtain a 2-dimensional strictly compatible system $\{\varphi_{\lambda}\}$ such that for infinitely many $\lambda$ we have $\overline{\varphi}_{\lambda}^{\text{ss}}\cong\overline{\sigma}_{\lambda}$. If the system $\{\varphi_{\lambda}\}$ is not Lie-irreducible, then each $\varphi_{\lambda}$ is induced from a character. Consequently, those infinitely many $\overline{\sigma}_{\lambda}$ is also induced from characters. But this would produce zero weights in the formal character of $\rho_{\lambda_0}^{\mathrm{der}}$, which is not the case. Hence each $\varphi_{\lambda}$ must be Lie-irreducible.

	Define an 8-dimensional strictly compatible system by
	$$\{\psi_{\lambda}:=\rho_{\lambda}\oplus\varphi_{\lambda}\}$$
	On one hand, Goursat's lemma asserts that the derived subgroup $\textbf{G}_{\lambda}^{\text{der}}=\text{Sp}_6\times\text{SL}_2$, which has rank 4. On the other hand, for those $\lambda$ with $\overline{\varphi}_{\lambda}^{\text{ss}}\cong\overline{\sigma}_{\lambda}$, we have $\overline{\psi}_{\lambda}^{\text{ss}}=\overline{\rho}_{\lambda}^{\text{ss}}\oplus\overline{\sigma}_{\lambda}$. The algebraic envelope has semisimple rank 3, contradicting \autoref{Magnesium}.(i).
	
	Next we show that $\overline{\rho}_{\lambda}^{\text{ss}}|_{\text{Gal}_{\mathbb{Q}(\zeta_{\ell})}}$ is irreducible for all but finitely many $\lambda$ in the infinitely family (i). We may twist the original system $\{\rho_{\lambda}\}$ by a system of cyclotomic characters so that one (hence each) algebraic monodromy group in the family (i) becomes $\text{GSp}_6$.
	
	Suppose, to the contrary, that infinitely many of these restrictions are reducible. Since the formal bi-character of the algebraic envelope coincides with that of the algebraic monodromy group, $\underline{G}_{\lambda}$ must be one of the group in the chain
	$$\mathbb{G}_m\left(\text{SL}_2\times\text{SL}_2\times\text{SL}_2\right)\subseteq\mathbb{G}_m\left(\text{SL}_2\times\text{Sp}_4\right)\subseteq\text{GSp}_6$$
	Because $\overline{\rho}_{\lambda}^{\text{ss}}$ is irreducible, the middle group $\mathbb{G}_m\left(\text{SL}_2\times\text{Sp}_4\right)$ is ruled out.
	
	If $\underline{G}_{\lambda}=\text{GSp}_6$, then for sufficiently large $\lambda$ the commutants of $\text{Gal}_{\mathbb{Q}(\zeta_{\ell})}$ and $\underline{G}_{\lambda}^{\text{der}}=\text{Sp}_6$ are equal by \autoref{Magnesium}.(ii). Thus $\overline{\rho}_{\lambda}^{\text{ss}}|_{\text{Gal}_{\mathbb{Q}(\zeta_{\ell})}}$ is irreducible for such $\lambda$.
	
	If $\underline{G}_{\lambda}=\mathbb{G}_m\left(\text{SL}_2\times\text{SL}_2\times\text{SL}_2\right)$, \autoref{Magnesium}.(ii) gives a finite Galois extension $L/\mathbb{Q}$\footnote{By the construction of $\underline{G}_{\lambda}$, the index $|\overline{\rho}_{\lambda}^{\text{ss}}(\text{Gal}_{\mathbb{Q}})/\overline{\rho}_{\lambda}^{\text{ss}}(\text{Gal}_{\mathbb{Q}})\cap\underline{G}_{\lambda}(\overline{\mathbb{F}}_{\ell})|$ is bounded as $\lambda$ varies. Hence there exists a Galois extension $L/\mathbb{Q}$ such that $\overline{\rho}_{\lambda}^{\text{ss}}(\text{Gal}_{L})\subseteq\underline{G}_{\lambda}(\overline{\mathbb{F}}_{\ell})$ for $\lambda\gg0$.} such that the commutants of the image $\overline{\rho}_{\lambda}^{\text{ss}}(\text{Gal}_L)$ and the algebraic envelope $\underline{G}_{\lambda}$ are equal. In particular, $\overline{\rho}_{\lambda}^{\text{ss}}|_{\text{Gal}_L}$ decomposes as a direct sum of three 2-dimensional irreducible representations. By \cite[Lemma 4.3]{CG13}, each $\overline{\rho}_{\lambda}^{\text{ss}}$ is induced from a 2-dimensional irreducible representation of some quadratic field $K_{\lambda}\subseteq L$. Hence infinitely many of the fields $K_{\lambda}$ coincide, and we denote this common field by $K$.
	
	For any rational prime $p$ that split in $K$ and lies outside the exceptional set of the compatible system, the trace of $\rho_{\lambda}(\mathrm{Frob}_p)$ vanishes modulo $\lambda$ for an infinite set of $\lambda$. By compatibility, the trace of $\rho_{\lambda}(\mathrm{Frob}_p)$ is independent of $\lambda$, hence it must be actually zero. Recall that $d(\rho)$ denots the Dirichlet density of rational primes on which the trace of $\rho$ is nonzero. The above argument implies $d(\rho_{\lambda})\le1/2$. But since $\rho_{\lambda_0}$ is Lie-irreducible, we have $d(\rho_{\lambda_0})=1$. This contradicts \autoref{Boron}.
	
	\subsubsection{Case (b)} Suppose that $\rho_{\lambda}$ is reducible for infinitely many $\lambda$. The formal character of $\rho_{\lambda_0}^{\mathrm{der}}$ in this case is $$\{x,x^{-1},xy,x^{-1}y,xy^{-1},x^{-1}y^{-1}\}$$ 
	As before, since this formal character contains no zero weight and no three weights summing to zero, a reducible $\rho_{\lambda}$ cannot have irreducible constituents of dimension 1 or 3. Hence the only possible dimensional types for a reducible $\rho_{\lambda}$ are $2+2+2$ and $2+4$. The former case contradicts the irreducibility of $\rho_{\lambda_0}$ by \autoref{Fluorine}. Thus we may assume that every reducible $\rho_{\lambda}$ decomposes as
	\begin{equation}\label{4}
	\rho_{\lambda}=W_{\lambda}\oplus W_{\lambda}'
	\end{equation}
	with $\dim W_{\lambda}=2$ and $\dim W_{\lambda}'=4$.
	
	Since the semisimple rank of $\rho_{\lambda_0}$ is 2, the derived group $\textbf{G}_{\lambda}^{\mathrm{der}}$ is either $\text{Sp}_4$ or $\text{SL}_2\times\text{SL}_2$. The case $\text{Sp}_4$ is excluded since it has no 2-dimensional irreducible representations. Therefore $\textbf{G}_{\lambda}^{\text{der}}=\text{SL}_2\times\text{SL}_2$. The 2-dimensional part $W_{\lambda}$ fits into a strictly compatible system for sufficiently large $\lambda$ by \autoref{Oxygen}.(i), and the 4-dimensional constituent $W_{\lambda}'$ satisfies
	$$W_{\lambda}^{'\text{der}}=\left(\text{SL}_2\times\text{SL}_2,\text{Std}\otimes\text{Std}\right)$$
	
	We show that after possibly twisting the system $\{\rho_{\lambda}\}$ with a system of cyclotomic characters, we may assume the system is connected, i.e. the algebraic monodromy group $\textbf{G}_{\lambda_0}$ (hence for all $\lambda$, by \autoref{Lithium}.(i)) is connected. Denote by $G$ the algebraic monodromy group of $\rho_{\lambda_0}$ and by $G^{\text{der}}=\mathrm{SL}_2\otimes\mathrm{SO}_3\subseteq\mathrm{GL}_6$ its derived subgroup. Then $G$ is contained in the normalizer $N$ of $G^{\mathrm{der}}$ inside $\mathrm{GL}_6$. Since the Dynkin diagrams of $\text{SL}_2$ and $\text{SO}_3$ admit no nontrivial graph automorphisms, the group $G^{\text{der}}\cong\text{SL}_2\times\text{SO}_3$ has no outer automorphisms. Moreover, the centralizer of $G^{\text{der}}$ in $\text{GL}_6$ is $\mathbb{G}_m$. Hence $N\subseteq\mathbb{G}_mG^{\text{der}}$, and consequently if $G$ is disconnected, it must be of the form $\mu_N G^{\text{der}}$. Twisting the whole system by the system of $\ell$-adic cyclotomic characters then makes the algebraic monodromy group of $\rho_{\lambda_0}$ equal to the connected group $\mathbb{G}_mG^{\text{der}}$.
	
	Thus we may assume $\{\rho_{\lambda}\}$ is connected. In particular, for any reducible $\rho_{\lambda}$ the component $W_{\lambda}'$ factors through the identity component $\text{GO}_4^{\circ}$. By \cite[Corollary 2.2.3, Corollary 3.3.8]{LY16}, we may write infinitely many such $W_{\lambda}'$ as
	\begin{equation}\label{2}
	W_{\lambda}'=\sigma_{\lambda}\otimes\sigma_{\lambda}'
	\end{equation}
	where $\sigma_{\lambda}$ and $\sigma_{\lambda}'$ are both 2-dimensional irreducible representations, unramified outside a finite set of rational primes, and crystalline when restricted to $\mathrm{Gal}_{\mathbb{Q}_{\ell}}$, where $\ell$ denotes the rational prime below $\lambda$.
	
	We show for sufficiently large $\lambda$, both $\sigma_{\lambda}$ and $\sigma_{\lambda}'$ fit into compatible systems. Then both $W_{\lambda}$ and $W_{\lambda}'$ fit into strictly compatible systems for sufficiently large $\lambda$. By the compatibility of the system, this contradicts the irreducibility of $\rho_{\lambda_0}$.
	
	Since our compatible system is connected, both $\sigma_{\lambda}$ and $\sigma_{\lambda}'$ are Lie-irreducible. We verify that the conditions in \autoref{Carbon}\footnote{Alternatively, use \autoref{Nickel} and check only the conditions in \autoref{Nitrogen} for $\lambda\gg0$.} hold for sufficiently large $\lambda$. To check condition (ii) we use \autoref{Nitrogen}.
	\begin{itemize}
		\item Condition \autoref{Carbon}.(i) and \autoref{Nitrogen}.(i) hold by construction.
		\item Condition \autoref{Carbon}.(iii) follows, as before, from \autoref{Zinc}.
		\item Condition \autoref{Nitrogen}.(iii) is obvious.
		\item Condition \autoref{Nitrogen}.(iv) is a consequence of two obvious facts: the Hodge-Tate weights of all $\rho_{\lambda}$ are the same, and the set of differences of Hodge-Tate weights of $\sigma_{\lambda}$ and $\sigma_{\lambda}'$ is contained in the set of differences of Hodge-Tate weights of $\rho_{\lambda}$.
	\end{itemize}
	
	Finally for condition \autoref{Carbon}.(iv) and \autoref{Nitrogen}.(ii), it suffices to verify the latter. For a reducible $\rho_{\lambda}$, its symmetric square contains the component
	$$\text{Sym}^2(\sigma_{\lambda})\otimes\text{Sym}^2(\sigma_{\lambda}')$$
	which is irreducible and of type A. Since the algebraic monodromy group of $\rho_{\lambda}$ is connected, this component is furthermore Lie-irreducible. Applying \autoref{Magnesium}.(iii) to the system $\{\text{Sym}^2(\rho_{\lambda})\}$ shows that for sufficiently large $\lambda$, the above constituent is residually irreducible when restricted to $\mathbb{Q}(\zeta_{\ell})$. In particular, both $\text{Sym}^2(\sigma_{\lambda})$ and $\text{Sym}^2(\sigma_{\lambda}')$ are residually irreducible after restriction to $\mathbb{Q}(\zeta_{\ell})$.
	
	\begin{remark}\label{Potassium}
		In our proof of \autoref{main} for the Lie-irreducible case, all arguments extend directly to a regular strictly compatible system of an arbitrary totally real field $F$, except for case (b). In that case we need to write a Galois representation $\text{Gal}_F\to\text{GO}_4^{\circ}(\overline{E_{\lambda}})$ as a tensor product of two 2-dimensional representations with suitable $\ell$-adic Hodge-theoretic properties, which can be done for $F=\mathbb{Q}$. For the obstruction to such a lifting, see \cite[Corollary 3.2.8]{Pa19}. 
		
		We also remark that in order to adapt our proof to the statement that a 6-dimensional fully polarisable compatible system of a totally real field has the same semisimple type for all but finitely many of its representations, one must add the extra condition that the similitude character of $\rho_{\lambda_0}$ is totally odd or totally even.
	\end{remark}
	\subsection{Proof of \autoref{main2}}\label{3.3}\ 
	
	By \autoref{main} we may assume that every $\rho_{\lambda}$ is reducible. We begin by establishing three lemmas.
	\begin{lem}\label{Calcium}
		For all but finitely many $\lambda$, the irreducible component $\sigma_{\lambda}$ of $\rho_{\lambda}$ that satisfies one of the following conditions fits into a strictly compatible system.
		\begin{itemize}
			\item $\dim\sigma_{\lambda}=1$ or 2,
			\item $\dim\sigma_{\lambda}=3$ and its semisimple type is either trivial or $\text{SO}_3$.
		\end{itemize}
	\end{lem}
	\begin{proof}
		If the semisimple type of $\sigma_{\lambda}$ is trivial, by \autoref{Beryllium} it is induced from a character. The conclusion then follows from class field theory and \autoref{Oxygen}.
	\end{proof}
	
	\begin{lem}\label{Cobalt}
		If there are infinitely many $\lambda$ for which $\rho_{\lambda}$ contains a 4-dimensional irreducible component $\sigma_{\lambda}$, then some of these constituents fit into a strictly compatible system.
	\end{lem}
	\begin{proof}
		
	By regularity each $\sigma_{\lambda}$ is either Lie-irreducible or induced from a Lie-irreducible representation. A 4-dimensional Lie-irreducible representation can have only four possible semisimple types: 
	$$(\text{SL}_2,\text{Sym}^3(\text{Std})), (\text{SO}_4,\text{Std}), (\text{SL}_4,\text{Std}), (\text{Sp}_4,\text{Std})$$ 
	Among these, only the last one is not of type A. Thus we may select infinitely many $\lambda$ satisfying one of the following:
	\begin{enumerate}
		\item[(a)] $\sigma_{\lambda}$ is induced from a character.
		\item[(b)] $\sigma_{\lambda}=\text{Ind}^{\mathbb{Q}}_{K_{\lambda}}\eta_{\lambda}$, where $K_{\lambda}$ is quadratic and $\eta_{\lambda}$ is a 2-dimensional Lie-irreducible representation of $K_{\lambda}$.
		\item[(c)] $\sigma_{\lambda}$ is Lie-irreducible with semisimple type A.
		\item[(d)] $\sigma_{\lambda}$ is Lie-irreducible with semisimple type $\text{Sp}_4$.
	\end{enumerate}
	
	As before, class field theory takes care of case (a). For the remaining three cases we show that for some $\lambda$, the representation $\sigma_{\lambda}$ is potentially automorphic, thus it fits into a strictly compatible system. We verify the conditions in \autoref{Carbon}. Conditions (i) and (iii) are treated exactly as before.
	Since $\rho_{\lambda}$ is essentially self-dual and odd, it has a similitude character $\chi_{\lambda}$. For dimensional reasons $\chi_{\lambda}$ is also a similitude character for $\sigma_{\lambda}$. Hence condition (ii) holds. It remains to check that $\overline{\sigma}_{\lambda}^{\text{ss}}|_{\text{Gal}_{\mathbb{Q}(\zeta_{\ell})}}$ is irreducible, where $\ell$ denotes the rational prime below $\lambda$. 
	
	In case (c) this follows directly from \autoref{Magnesium}.(iii). We therefore restrict our attention to cases (b) and (d). We first show that by shrinking the infinite set $\mathcal{L}$ of places $\lambda$, we may assume $\{\sigma_{\lambda}\}_{\lambda\in\mathcal{L}}$ forms a Serre compatible system. Write $\sigma_{\lambda}'$ for the direct complement of $\sigma_{\lambda}$ in $\rho_{\lambda}$. Suppose first for infinitely many $\lambda$, this 2-dimensional $\sigma_{\lambda}'$ is reducible. Write $\sigma_{\lambda}'=\alpha_{\lambda}\oplus\beta_{\lambda}$ as a direct sum of characters. We apply \autoref{Copper} to the representation $\rho_{\lambda}$. Since $\textbf{G}_{\sigma_{\lambda}}^{\mathrm{der}}$ has no zero weight, and for a character $\chi$ the quotient $\textbf{G}_{\chi}/Z(\textbf{G}_{\chi})$ is trivial, and furthermore under the regularity condition, the character $\alpha_{\lambda}\beta_{\lambda}^{-1}$ cannot have finite image. It follows that the set $\{\xi_a,a\in A\}$ is precisely $\{\alpha_{\lambda},\beta_{\lambda}\}$, and hence the weak abelian part is $\rho_{\lambda}^{\mathrm{wab}}=\alpha_{\lambda}\oplus\beta_{\lambda}$. After enlarging the coefficients, we extend this representation to a 2-dimensional semisimple Serre compatible system $\{\gamma_{\lambda}\}$. Choose another place $\lambda'$ such that $\sigma_{\lambda'}'$ is reducible. Then $\gamma_{\lambda'}$ is a weak abelian direct summand of $\rho_{\lambda'}$ and has the same dimension as $\rho_{\lambda'}^{\mathrm{wab}}=\alpha_{\lambda'}\oplus\beta_{\lambda'}$. Hence necessarily $\gamma_{\lambda'}=\alpha_{\lambda'}\oplus\beta_{\lambda'}$. Consequently, we may take $\mathcal{L}$ to be the set of places such that $\sigma_{\lambda}'$ is reducible.
	
	Now suppose that $\sigma_{\lambda}'$ is irreducible for sufficiently large $\lambda$. By \autoref{Nitrogen} and \autoref{Hydrogen}.(iii) we may further assume that each $\sigma_{\lambda}$ is odd and has fixed Hodge-Tate weights $\{a,b\}$ for $\lambda\gg0$. By twisting the original $\{\rho_{\lambda}\}$ by a suitable power of cyclotomic characters, we may assume $b=0$ and $a>0$. By \autoref{Nickel}, each $\sigma_{\lambda}'$ then is attached to a cuspidal eigenform $f_{\lambda}$ with weight $a+1$. We bound the (minimal) levels $N_{\lambda}$ of $f_{\lambda}$. By \cite[TH\'EOR\'EME (A)]{Ca86} this is equivalent to bounding the Artin conductor of $\sigma_{\lambda}'|_{\mathbb{Q}_{p}}$ for each $p$ in the exceptional set $S$ of $\{\rho_{\lambda}\}$ and $\lambda\nmid p$. But this is bounded by the Artin conductor of the Weil-Deligne representation $\mathrm{WD}_p$ in \autoref{Hydrogen}. Because there can only be finitely many eigenforms with fixed level and weight, we have an infinite set $\mathcal{L}$ of places $\lambda$ such that $f_{\lambda}$ coincide, and $\{\sigma_{\lambda}\}_{\lambda\in\mathcal{L}}$ forms a Serre compatible system.
	
	For case (b), choose one $\lambda_0\in\mathcal{L}$ and let $\chi$ be the character cutting off the quadratic field $K=K_{\lambda_0}$. As $\chi$ has finite image, it extends to a compatible system. From $\sigma_{\lambda_0}\otimes\chi\cong\sigma_{\lambda_0}$ and compatibility, it follows that $\sigma_{\lambda}\otimes\chi\cong\sigma_{\lambda}$ for all $\lambda\in\mathcal{L}$. Thus each $\sigma_{\lambda}$ is induced from a 2-dimensional $\eta'_{\lambda}$ of $K$. This $\eta'_{\lambda}$ is Lie-irreducible because $\eta_{\lambda}$ is. We may further shrink the infinite set $\mathcal{L}$ so that $K$ and $\mathbb{Q}(\zeta_{\ell})$ for $\lambda\in\mathcal{L}$ are linearly disjoint. Then
	\begin{equation}\label{6}
	\text{Res}_{\mathbb{Q}(\zeta_{\ell})}\sigma_{\lambda}=\text{Ind}^{\mathbb{Q}(\zeta_{\ell})}_{K(\zeta_{\ell})}\text{Res}_{K(\zeta_{\ell})}\sigma_{\lambda}
	\end{equation}
	Here $\text{Res}_{K(\zeta_{\ell})}\sigma_{\lambda}$ is Lie-irreducible of type A, so its induced representation is also of type A. Since $\sigma_{\lambda}$ is regular, Mackey's irreducibility criterion shows that the right hand side of (\ref{6}) is irreducible. Applying \autoref{Magnesium}.(iv) we then conclude that $\overline{\sigma}_{\lambda}^{\text{ss}}|_{\mathbb{Q}(\zeta_{\ell})}$ is irreducible for $\lambda\gg0$.
	
	For case (d), choose one $\lambda_0\in\mathcal{L}$ and twist the original system $\{\rho_{\lambda}\}$ by a system of suitable power of cyclotomic characters so that the algebraic monodromy group $\textbf{G}_{\sigma_{\lambda_0}}=\text{GSp}_4$. Then since $\{\sigma_{\lambda}\}_{\lambda\in\mathcal{L}}$ is a Serre compatible system, for each $\lambda\in\mathcal{L}$, the algebraic monodromy group is connected of rank 3 and satisfies
	$$\text{Sp}_4\subseteq\textbf{G}_{\sigma_{\lambda}}\subseteq\text{GSp}_4$$
	hence $\textbf{G}_{\sigma_{\lambda}}=\text{GSp}_4$ for all $\lambda\in\mathcal{L}$. Then the algebraic envelope $\underline{G}_{\sigma_{\lambda}}$ is either $\text{GSp}_4$ or $\mathbb{G}_m\left(\text{SL}_2\times\text{SL}_2\right)$. If infinitely many $\lambda$ fall into the first case, \autoref{Magnesium}.(ii) implies that the commutants of $\overline{\sigma}_{\lambda}^{\text{ss}}(\text{Gal}_{\mathbb{Q}(\zeta_{\ell})})$ and $\text{Sp}_4$ coincide. Thus $\overline{\sigma}_{\lambda}^{\text{ss}}|_{\mathbb{Q}(\zeta_{\ell})}$ is irreducible for $\lambda\gg0$. We may therefore assume $\underline{G}_{\sigma_{\lambda}}=\mathbb{G}_m\left(\text{SL}_2\times\text{SL}_2\right)$ for $\lambda\in\mathcal{L}$. 
	
	We first show that $\overline{\sigma}_{\lambda}^{\text{ss}}$ is irreducible for $\lambda\gg0$. If not, it decomposes as the sum of two 2-dimensional irreducible representations $\alpha_{\lambda}\oplus\beta_{\lambda}$. Since $\sigma_{\lambda}$ admits an odd similitude character $\chi_{\lambda}$, one of the following must hold:
	\begin{itemize}
		\item $\alpha_{\lambda}^{\vee}\otimes\overline{\chi}_{\lambda}\cong\alpha_{\lambda}$ and $\beta_{\lambda}^{\vee}\otimes\overline{\chi}_{\lambda}\cong\beta_{\lambda}$, or
		\item $\alpha_{\lambda}^{\vee}\otimes\overline{\chi}_{\lambda}\cong\beta_{\lambda}$ and $\beta_{\lambda}^{\vee}\otimes\overline{\chi}_{\lambda}\cong\alpha_{\lambda}$
	\end{itemize}
	We rule out the second case since otherwise the semisimple type of $\underline{G}_{\lambda}$ would be $\text{SL}_2$, whose semisimple rank does not match that of $\sigma_{\lambda}$. Thus for sufficiently large $\lambda$, both $\alpha_{\lambda}$ and $\beta_{\lambda}$ are odd.
	
	By \autoref{Sodium} there exist infinitely many $\lambda_1$ and two 2-dimensional strictly compatible systems $\{\widetilde{\alpha}_{\lambda}\}$ and $\{\widetilde{\beta}_{\lambda}\}$ such that the semisimple reductions of $\widetilde{\alpha}_{\lambda_1}$ and $\widetilde{\beta}_{\lambda_1}$ are $\alpha_{\lambda_1}$ and $\beta_{\lambda_1}$, respectively. Write $\rho_{\lambda_1}=\sigma_{\lambda_1}\oplus\sigma'_{\lambda_1}$ and consider the compatible system
	$$\left\{\psi_{\lambda}=\widetilde{\alpha}_{\lambda}\oplus\widetilde{\beta}_{\lambda}\oplus\rho_{\lambda}\right\}$$
	
	For a subrepresentation $\rho\subseteq\psi_{\lambda}$, denote by $G^{\text{der}}(\rho)$ and $\underline{G}^{\text{der}}(\rho)$ respectively the derived subgroups of its algebraic monodromy and of its algebraic envelope. For a semisimple group $G$ write $r(G)$ for its rank. We choose $\lambda_1$ sufficiently large so that \autoref{Magnesium}.(i) applies to this new system $\{\psi_{\lambda}\}$. In particular
	$$r(G^{\text{der}}(\psi_{\lambda_1}))=r(\underline{G}^{\text{der}}(\psi_{\lambda_1}))$$
	From the construction of $\{\widetilde{\alpha}_{\lambda}\}$ and $\{\widetilde{\beta}_{\lambda}\}$, we have
	$$r(\underline{G}^{\text{der}}(\psi_{\lambda_1}))=r(\underline{G}^{\text{der}}(\rho_{\lambda_1}))=r(G^{\text{der}}(\rho_{\lambda_1}))$$
	Combining these equalities we obtain
	\begin{equation}\label{5} r(G^{\text{der}}(\psi_{\lambda_1}))=r(G^{\text{der}}(\rho_{\lambda_1}))
	\end{equation}
	On the other hand, as $r(G^{\text{der}}(\widetilde{\alpha}_{\lambda_1}\oplus\widetilde{\beta}_{\lambda_1}))=r(\underline{G}^{\text{der}}(\widetilde{\alpha}_{\lambda_1}\oplus\widetilde{\beta}_{\lambda_1}))=r(G^{\text{der}}(\sigma_{\lambda_1}))=2$, it follows that
	$$r(G^{\text{der}}(\widetilde{\alpha}_{\lambda_1}\oplus\widetilde{\beta}_{\lambda_1}\oplus\sigma'_{\lambda_1}))\ge2>1\ge r(G^{\text{der}}(\sigma'_{\lambda}))$$
	The representation $\widetilde{\alpha}_{\lambda_1}\oplus\widetilde{\beta}_{\lambda_1}\oplus\sigma'_{\lambda_1}$ is of type A, the semisimple type of $\sigma'_{\lambda_1}$ is either type A or trivial. Moreover $\sigma_{\lambda_1}$ is of type $\text{Sp}_4$. By Goursat's lemma,
	$$G^{\text{der}}(\psi_{\lambda_1})=G^{\text{der}}(\widetilde{\alpha}_{\lambda_1}\oplus\widetilde{\beta}_{\lambda_1}\oplus\sigma'_{\lambda_1})\times\text{Sp}_4$$
	$$G^{\text{der}}(\rho_{\lambda_1})=G^{\text{der}}(\sigma'_{\lambda_1})\times\text{Sp}_4$$
	Therefore
	$$r(G^{\text{der}}(\psi_{\lambda_1}))>r(G^{\text{der}}(\rho_{\lambda_1}))$$
	which contradicts (\ref{5}).
	
	Now we assume that $\overline{\rho}_{\lambda}^{\text{ss}}$ is irreducible but its restriction $\overline{\rho}_{\lambda}^{\text{ss}}|_{\mathbb{Q}(\zeta_{\ell})}$ is reducible for $\lambda\gg0$. By \autoref{Magnesium}.(ii), there is a Galois extension $L/\mathbb{Q}$ such that the commutants of $\overline{\sigma}_{\lambda}^{\text{ss}}(\text{Gal}_L)$ and $\underline{G}_{\sigma_{\lambda}}$ coincide. Hence each $\overline{\sigma}_{\lambda}^{\text{ss}}$ is induced from a 2-dimensional irreducible representation $\theta_{\lambda}$ of some quadratic field $E_{\lambda}\subseteq L$. After shrinking the infinite set of $\lambda$ we may assume all $E_{\lambda}$ are equal and we denote this common field by $E$. Shrink further we can arrange that $E$ and $\mathbb{Q}(\zeta_{\ell})$ are linearly disjoint as $\ell=\ell(\lambda)$ varies. Then
	$$\overline{\sigma}_{\lambda}^{\text{ss}}|_{\mathbb{Q}(\zeta_{\ell})}=\text{Ind}^{\mathbb{Q}(\zeta_{\ell})}_{E(\zeta_{\ell})}\left(\theta_{\lambda}|_{E(\zeta_{\ell})}\right)$$
	We apply Mackey's irreducibility criterion, which requires:
	\begin{enumerate}
		\item[(i)] The 2-dimensional $\theta_{\lambda}|_{E(\zeta_{\ell})}$ is irreducible.
		\item[(ii)] For any $s\in\text{Gal}_{\mathbb{Q}(\zeta_{\ell})}\backslash\text{Gal}_{E(\eta_{\ell})}$ denote by $\theta^s_{\lambda}$ the representation $g\mapsto\theta_{\lambda}(s^{-1}gs)$ of $E(\zeta_{\ell})$. Then $\theta_{\lambda}$ and $\theta^s_{\lambda}$ are not isomorphic.
	\end{enumerate}
	For condition (i), if it splits into a sum of two characters, the algabraic envelope of $\sigma_{\lambda}$ would be trivial, contradicting \autoref{Magnesium}.(i). For condition (ii), choose $\lambda$ sufficiently large and a place $v$ of $E(\zeta_{\ell})$ outside the exceptional set, so that the local representation $\rho=\rho_{\lambda}|_{E(\zeta_{\ell})_v}$ is crystalline, regular and Fontaine-Laffaille (i.e. the absolute differences of its Hodge-Tate weights lie in $[1,\ell-2]$). Fontaine-Laffaille theory \cite[Theorem 5.5]{FL82} (see also \cite[Theorem 1.0.1]{Ba20}) together with regularity then asserts that each irreducible component of $\overline{\rho}^{\text{ss}}$ has multiplicity one. In particular $\theta_{\lambda}$ and $\theta^s_{\lambda}$ cannot be isomorphic.
	\end{proof}
	
	\begin{lem}\label{Titanium}
		If the purity condition holds, then no $\rho_{\lambda}$ can contain a 3-dimensional irreducible constituent $\sigma_{\lambda}$ whose semisimple type is $\text{SL}_3$.
	\end{lem}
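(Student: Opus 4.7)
The plan is to derive a contradiction from the existence of such a $3$-dimensional irreducible component $W_{\lambda_0}$ with semisimple type $\SL_3$. The standard representation of $\SL_3$ is not self-dual, and a $3$-dimensional essentially self-dual representation would factor through $\GO_3$ (whose derived algebraic monodromy $\SO_3$ has rank $1$, incompatible with $\SL_3$ of rank $2$) rather than $\GSp_3$, which does not exist in odd dimension. Hence $W_{\lambda_0}$ is not essentially self-dual for any character twist. Combined with $\rho_{\lambda_0}\cong\rho_{\lambda_0}^{\vee}\otimes\chi_{\lambda_0}$ and the uniqueness of the irreducible decomposition, one is forced into
\begin{equation*}
\rho_{\lambda_0} \cong W_{\lambda_0} \oplus \left(W_{\lambda_0}^{\vee} \otimes \chi_{\lambda_0}\right),
\end{equation*}
with both summands irreducible of semisimple type $\SL_3$.

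Next I would pin down the Hodge-Tate weight of the similitude character. Since the Frobenius eigenvalues of each summand form a subset of those of $\rho_{\lambda_0}$, both $W_{\lambda_0}$ and $W_{\lambda_0}^{\vee}\otimes\chi_{\lambda_0}$ are pure of weight $\omega$. Comparing their Frobenius eigenvalues shows that $\chi_{\lambda_0}$ is pure of weight $2\omega$, hence $h_{\chi_{\lambda_0}}=\omega$. (Equivalently, taking determinants in the self-duality yields $(\det\rho_{\lambda_0})^{2}=\chi_{\lambda_0}^{6}$, and the sum of the Hodge-Tate weights of $\rho_{\lambda_0}$ under purity equals $3\omega$, giving $3h_{\chi_{\lambda_0}}=3\omega$.)

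The decisive step exploits Hodge-Tate symmetry for pure representations: the three distinct Hodge-Tate weights of the pure representation $W_{\lambda_0}$ must be symmetric around $\omega/2$, and since the number of weights is odd, the middle weight is forced to equal $\omega/2$ exactly. The same reasoning applied to $W_{\lambda_0}^{\vee}\otimes\chi_{\lambda_0}$ (using $h_{\chi_{\lambda_0}}=\omega$, so that this subrepresentation is again pure of weight $\omega$ and its Hodge-Tate weights are $\{\omega-a_1,\omega-a_2,\omega-a_3\}$) places $\omega/2$ among its Hodge-Tate weights as well. Therefore $\omega/2$ appears with multiplicity at least $2$ in the Hodge-Tate weights of $\rho_{\lambda_0}$, contradicting the regularity hypothesis.

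The main delicacy is the repeated appeal to Hodge-Tate weight symmetry around $\omega/2$ for pure representations; this is where the purity hypothesis enters decisively, and its justification goes beyond the Frobenius-eigenvalue definition of purity, requiring the standard Hodge-theoretic content of purity (via the interaction of the weight and Hodge filtrations on the de Rham realization) that is available for subrepresentations of strictly compatible systems as set up in \autoref{Hydrogen}.
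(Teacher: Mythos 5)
Your reduction to the decomposition $\rho_{\lambda_0}\cong W_{\lambda_0}\oplus\bigl(W_{\lambda_0}^{\vee}\otimes\chi_{\lambda_0}\bigr)$ with both summands irreducible of semisimple type $\SL_3$ is correct and coincides with the paper, as does the computation that $\chi_{\lambda_0}$ is pure of weight $2\omega$ with Hodge--Tate weight $\omega$. The gap is the decisive step: Hodge symmetry for the pure subrepresentation $W_{\lambda_0}$ alone, i.e.\ that its three Hodge--Tate weights are symmetric around $\omega/2$, is not a theorem in the required generality. Hodge symmetry is known for $\ell$-adic cohomology of varieties and for Galois representations of automorphic origin; for an abstract pure de Rham representation it is part of the Fontaine--Mazur philosophy and remains open. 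Worse, $W_{\lambda_0}$ is precisely of the type where one cannot route through automorphy: being of $\SL_3$ type it is not essentially self-dual, so Theorem~\ref{Carbon} does not apply and no potential automorphy input is available. The strictly-compatible-system axioms of Definition~\ref{Hydrogen} give de Rham and Weil--Deligne compatibilities, but they do not produce a weight filtration on $D_{\mathrm{dR}}$ pairing with the Hodge filtration, which is what your final paragraph is implicitly invoking. Note also that the symmetry you do get for free from $\rho_{\lambda_0}\cong\rho_{\lambda_0}^{\vee}\otimes\chi_{\lambda_0}$ is only that the multiset $\{b_1,b_2,b_3,\omega-b_1,\omega-b_2,\omega-b_3\}$ is stable under $a\mapsto\omega-a$; given your decomposition this is automatic and imposes no constraint on $\{b_1,b_2,b_3\}$ by itself.

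The paper's proof avoids Hodge symmetry entirely. After establishing the same $3+3$ decomposition of $\rho_{\lambda_0}$ into $\SL_3$-type pieces, it argues by Dirichlet density: using Lemma~\ref{Calcium} and a semisimple-rank comparison it shows that for all but finitely many $\lambda$ the dimensional type of $\rho_\lambda$ is either $3+3$ with $\SL_3$-type pieces or $5+1$; it then rules out a positive-density set of $5+1$ types via Proposition~\ref{Neon} and Theorem~\ref{Carbon} (which would split the system as $5\oplus 1$, incompatible with $\rho_{\lambda_0}$); and it concludes that a density-$1$ set of $\lambda$ would exhibit a $3$-dimensional irreducible $\SL_3$-type constituent, contradicting~\cite[Lemma~1.6]{PT15}. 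In the paper, then, purity enters through the Patrikis--Taylor density lemma rather than through any Hodge-theoretic symmetry, and replacing that input by an appeal to Hodge symmetry would require a theorem that is not presently available.
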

	\begin{proof}
		Suppose, for contradiction, that such component exists for $\lambda_0$. By essential self-duality, $\rho_{\lambda_0}$ must decompose as  $\rho_{\lambda_0}=\sigma_{\lambda_0}\oplus \sigma'_{\lambda_0}$ with $\sigma'_{\lambda_0}=\sigma^{\vee}_{\lambda_0}\otimes\chi_{\lambda_0}$. Hence there cannot be infinitely many $\lambda$ for which every irreducible constituent of $\rho_{\lambda}$ belongs to one of the types listed in \autoref{Calcium} and \autoref{Cobalt}. Indeed, otherwise those lemmas would imply the existence of either
		\begin{itemize}
			\item a 3-dimensional strictly compatible system $\{\psi_{\lambda}\}$ such that $\psi_{\lambda_0}=\sigma_{\lambda_0}$ while another $\psi_{\lambda_1}$ is either irreducible of semisimple type $\text{SO}_3$ or trivial, or has a character as a constituent; or
			\item a 4-dimensional strictly compatible system $\{\psi_{\lambda}\}$ such that for some $\psi_{\lambda_1}$ is irreducible while $\psi_{\lambda_0}$ contains $\sigma_{\lambda_0}$ as a constituent.
		\end{itemize}
		In either case, the formal characters of $\psi_{\lambda_0}^{\text{der}}$ and $\psi_{\lambda_1}^{\text{der}}$ do not match.
		
		Thus for all but finitely many $\lambda$, the only possible dimensional type of $\rho_{\lambda}$ other than $3+3$ is $5+1$. Denote by $d$ the Dirichlet density of those $\lambda$ for which $\rho_{\lambda}$ has dimensional type $5+1$. If $d>0$, then \autoref{Neon} implies that infinitely many of those 5-dimensional components satisfy \autoref{Carbon}.(iv). Since by assumption $\rho_{\lambda}$ is essentially self-dual and odd, and admits a 5-dimensional irreducible subrepresentation, $\rho_{\lambda}$ is necessarily of orthogonal type with an even similitude character $\chi_{\lambda}$. Then for dimensional reasons $\sigma_{\lambda}\cong \sigma_{\lambda}^{\vee}\otimes\chi_{\lambda}$, thus $\sigma_{\lambda}$ satisfies \autoref{Carbon}.(ii). The remaining conditions of \autoref{Carbon} are easily verified. Hence we could write $\{\rho_{\lambda}\}$ as a direct sum of a 5-dimensional compatible system and a 1-dimensional compatible system. This contradicts the decomposition of $\rho_{\lambda_0}$. Hence $d$ must be zero. 
		
		Thus we may assume there is a set of Dirichlet density one of primes $\lambda$ such that $\rho_{\lambda}=\sigma_{\lambda}\oplus\sigma_{\lambda}'$ where $\sigma'_{\lambda}=\sigma^{\vee}_{\lambda}\otimes\chi_{\lambda}$. On the other hand, \cite[Lemma1.5, Lemma 1.6]{PT15}\footnote{Lemma 1.6 applies to compatible systems of totally real fields as well.} assert that there exists a set of primes $\lambda$ of positive Dirichlet density such that for any irreducible constituent $\sigma\subseteq\rho_{\lambda}$, the Hodge-Tate weights of $\sigma$ and $\sigma^{\vee}\otimes\chi_{\lambda}$ are the same. This contradicts the regularity condition.
	\end{proof}
	
	We now prove \autoref{main2}. Let $r$ be the smallest integer for which there exist infinitely many $\lambda$ such that each irreducible component of $\rho_{\lambda}$ has dimension at most $r$. The three lemmas above assert that our result is true for $r\le4$. Assume $r=5$. Then for sufficiently large $\lambda$, the representation $\rho_{\lambda}$ has dimensional type $5+1$. The same argument outlined in the second paragraph of \autoref{Titanium} then shows that some 5-dimensional constituent fits into a compatible system. This completes the proof.
	
	\section*{Acknowledgement}
	I would like to express my deepest gratitude to my advisor, Chun Yin Hui, for his invaluable guidance, insightful suggestions, unwavering support, and encouragement throughout not only this research, but also my Ph.D. journey. I am deeply appreciative of his time, effort, and dedication. His mentorship has been a cornerstone of my academic journey.


\end{document}